\documentclass[11pt]{article}

\usepackage{amscd, amsfonts, amsmath, amssymb, amsthm}
\usepackage{appendix}
\usepackage{cite}
\usepackage{float}
\usepackage{geometry}
\usepackage{graphicx}
\usepackage{hyperref}
\usepackage{microtype}
\usepackage{xcolor}

\hypersetup{
    colorlinks = true,
    linkcolor  = blue,
    citecolor  = red,
    urlcolor   = magenta
}

\numberwithin{equation}{section}

\newtheorem{theorem}{theorem}[section]

\newtheorem{definition}[theorem]{Definition}
\newtheorem{example}[theorem]{Example}
\newtheorem{proposition}[theorem]{Proposition}

\begin{document}

\thispagestyle{empty}

\hfill \today

\vspace{0.5cm}

\begin{center}

\LARGE{\textbf{Universal $T$-matrices for quantum Poincaré groups: contractions and quantum reference frames}}

\end{center}

\begin{center}

\textsc{Angel Ballesteros}$^1$, \textsc{Diego Fernandez-Silvestre}$^2$, \textsc{Ivan Gutierrez-Sagredo}$^2$

\medskip

$^1$\textit{Departamento de Física, Universidad de Burgos, 09001 Burgos, Spain}

$^2$\textit{Departamento de Matemáticas y Computación, Universidad de Burgos, 09001 Burgos, Spain}

\medskip
 
Emails: \href{mailto:angelb@ubu.es}{angelb@ubu.es}, \href{mailto:dfsilvestre@ubu.es}{dfsilvestre@ubu.es}, \href{mailto:igsagredo@ubu.es}{igsagredo@ubu.es}

\end{center}

\begin{abstract}

Universal $T$-matrices, or Hopf algebra dual forms, for quantum groups are revisited, and their contraction theory is developed. As a first illustrative example, the (1+1) timelike $\kappa$-Poincaré $T$-matrix is explicitly worked out. Afterwards, motivated by recent results on the role of the Hopf algebra dual form of a quantum (1+1) centrally extended Galilei group as the algebraic object underlying non-relativistic quantum reference frame transformations, a new quantum deformation of the (1+1) centrally extended Poincaré Lie algebra is obtained, and its universal $T$-matrix is presented. Finally, the Hopf algebra dual form contraction is applied to this Poincaré $T$-matrix, showing that its corresponding non-relativistic counterpart is precisely the Galilei $T$-matrix associated with quantum reference frames. In this way, the Poincaré Hopf algebra dual form introduced here stands as a natural candidate for describing the symmetry structure of relativistic quantum reference frame transformations. In the appropriate basis, the associated quantum Poincaré group is recognized, remarkably, as a non-trivial central extension of the (1+1) spacelike $\kappa$-Poincaré dual Hopf algebra.

\end{abstract}


\medskip

\noindent \textsc{Keywords:}
Hopf algebras, universal $T$-matrices, quantum groups, Poincaré group, Galilei group, central extensions, contractions, quantum reference frames

\tableofcontents


\section{Introduction}

Let $H$ be a Hopf algebra. Its Hopf algebra dual form or universal $T$-matrix $T \in H^* \otimes H$ is defined as
\begin{equation} \label{T0}
    T := x_\mu \otimes X^\mu \ ,
\end{equation}
where the $X^\mu \in H$ are basis elements of the Hopf algebra $H$ and the $x_\mu \in H^*$ are basis elements of the dual Hopf algebra $H^*$ (summation over repeated indexes is assumed). 

In this paper, we will consider the Hopf algebra $H$ to be a quantum universal enveloping algebra $U_q(\mathfrak{g})$, where $\mathfrak{g}$ is the Lie algebra of a finite-dimensional Lie group $G$. Accordingly, the (restricted) dual Hopf algebra $H^*$ will be the quantum group, namely the noncommutative analogue of the commutative algebra of regular functions $\mathcal{O}(G)$ on the Lie group $G$ (with $G$ arising as the $\mathbb{K}$-points of a connected affine algebraic group defined over a field $\mathbb{K}$ of characteristic 0)\footnote{Note that, if $G$ is an affine algebraic group defined over a field $\mathbb{K}$ of characteristic 0 (typically $\mathbb{R}$ or $\mathbb{C}$), then the set of $\mathbb{K}$-points $G(\mathbb{K})$, equipped with the analytic manifold topology, is a Lie group. Conversely, an arbitrary Lie group need not admit the structure of an affine algebraic group. In this paper, Lie groups are always considered to arise from such affine algebraic group structure.}, \textit{i.e.}~its coordinate ring $\mathcal{O}(G) \cong \mathbb{K}[G]$. \cite{Drinfeld:1986in, Chari1994, Majid1995}. This algebra of regular functions $\mathcal{O}(G)$ canonically embeds as a subalgebra of the algebra of smooth functions $C^\infty(G)$ in the usual way when $\mathbb{K} = \mathbb{R}$. In this context, the Hopf algebra dual form was introduced for the first time in \cite{Fronsdal:1991gf, Fronsdal1994}. It can be regarded as the canonical object that condenses the so-called quantum duality principle \cite{Semenov-Tian-Shansky:1992xxt} and, since then, the universal $T$-matrix has been constructed for a number of quantum algebras and groups, thereby establishing a comprehensive and explicit framework for Hopf algebra duality, with applications in the construction of $q$-coherent states, the derivation of quantum universal $R$-matrices, as well as in quantum gravity and representation theory (see \cite{Bonechi:1993sn, Jurco:1994cx, Damaskinsky:1994au, Morozov:1994ab, Chakrabarti:1994hp, Jagannathan:1994cm, Ballesteros1995, Fronsdal1995a, Fronsdal1995b, VanDerJeugt:1995yn, Chakrabarti1996, Ballesteros:1996awf, Feinsilver1997, Quesne1998, Ahmedov1999, Chakrabarti1999, Aizawa2000, Sokolov2000, Aizawa:2005fq, Aizawa2006, Aizawa2009, Girelli:2022foc, Mertens:2022aou} and references therein).

The Hopf algebra dual form for quantum deformations of Lie algebras and Lie groups becomes particularly interesting since the universal $T$-matrix can be considered as the noncommutative algebraic analogue of the exponential map for Lie groups. In fact, if $H = U(\mathfrak{g})$, then $H^* = \mathcal{O}(G)$, and Eq.~\eqref{T0} results in an algebraic realization of the usual exponential map from the Lie algebra $\mathfrak{g}$ to the Lie group $G$. That is, if the Lie algebra $\mathfrak{g}$ has basis $\{X^1, \dots, X^n\}$, the universal $T$-matrix reads
\begin{equation} \label{Texp}
	T =  e^{x_1 \otimes X^1} \, e^{x_2\otimes X^2} \, \dots \, e^{x_n \otimes X^n} \ ,
\end{equation}
where the (commutative) local coordinate functions $\{x_1, \dots, x_n\}$ on the Lie group are the dual ones to the corresponding Lie algebra generators. Now, if $H = U_q(\mathfrak{g})$ is a Hopf algebra deformation of $U(\mathfrak{g})$ generated by $\{X^1, \dots, X^n\}$, \textit{i.e.}~a quantum algebra, then $H^* = \mathcal{O}_q(G)$ is the dual Hopf algebra deformation of $\mathcal{O}(G)$ generated by the (noncommutative) local coordinate functions $\{x_1, \dots, x_n\}$, \textit{i.e.}~the quantum group, and the expression for the universal $T$-matrix generally reads
\begin{equation} \label{Tqexp}
	T_q = e_{\alpha_1}^{x_1 \otimes X^1} \, e_{\alpha_2}^{x_2\otimes X^2} \, \dots \, e_{\alpha_n}^{x_n \otimes X^n} \ ,
\end{equation}
where each $e_{\alpha_i}$ can be given, in principle, by a different type of $q$-exponential function, with $\alpha_i$ a function of $q$. We anticipate that the cases considered in this paper will be such that $\alpha_i(q) =  1$, then the universal $T$-matrix will be formally identical to Eq.~\eqref{Texp}, namely written in terms of products of usual exponential functions with exponents in $H^* \otimes H$.

As a consequence, the Hopf algebra dual form is a convenient mathematical object permitting the presentation of quantum groups in the closest possible way to that of Lie groups in a neighborhood of the identity, where the description of the former in terms of `quantum local coordinates' is manifest. Indeed, Eq.~\eqref{Texp} is recovered when taking the limit $q \to 0$ of Eq.~\eqref{Tqexp}, namely in the limit of no quantum deformation. As we will see, the universal $T$-matrix is suitable for analyzing quantum kinematical groups (specifically quantum Galilei and Poincaré groups), since their noncommutative homogeneous spacetimes and their quantum symmetry transformations can be presented in a very transparent way with respect to their `classical' counterparts.

This paper is motivated by a novel application of Hopf algebra dual forms of quantum kinematical groups, which has emerged recently within the framework of `quantum reference frames' (QRFs) \cite{Ballesteros:2025ypr}. The claim that physical reference frames constitute dynamical material systems themselves and therefore require a description within quantum theory has a long history \cite{DeWitt1967, Aharonov1967a}. It has been developed from different perspectives including quantum foundations, quantum information, quantum gravity, and mathematical physics (see, for instance, \cite{Aharonov1984, Rovelli:1990pi, Brown:1995fj, Poulin:2006ryq, Bartlett2007, Girelli:2007xn, Angelo2011, Tambornino:2011vg, Palmer:2013zza, Busch2016, Smith2016, Giacomini:2017zju, Giacomini:2018gxh, Lizzi:2018qaf, Lizzi:2019wto, delaHamette:2020dyi, Hoehn:2020epv, Streiter:2021kta, Krumm:2020fws, Ballesteros:2020lgl, Giacomini:2021gei, Mikusch:2021kro, Lizzi:2022hcq, Apadula:2022pxk, Glowacki:2023nnf, Lake:2023nua, Fewster:2024pur, AliAhmad:2024vdw, DEsposito:2024wru, Fiore:2025oks, DeVuyst:2025ezt, Fedida:2025viy}).

In the perspectival approach to QRFs introduced in \cite{Giacomini:2017zju}, transformations between inertial QRFs describing a quantum particle $B$ are formally realized as usual Galilei transformations where the group parameters are promoted to quantum operators acting on the supplementary Hilbert space of another quantum particle $A$ playing the role of QRF. This idea of `noncommutative group parameters' within Galilei transformations is indeed behind the Hopf algebra dual form \eqref{Texp} for quantum Galilei groups and, in \cite{Ballesteros:2025ypr}, the quantum Galilei group approach to the Galilei QRF transformations of \cite{Giacomini:2017zju} has been developed and related to the Lie group-theoretical perspective of \cite{Ballesteros:2020lgl}. In particular, there exists a unique quantum (1+1) centrally extended Galilei group (with non-trivial central extension) identified in \cite{Ballesteros:2025ypr} such that its universal $T$-matrix produces the sought Galilei QRF transformations, provided that only the first order in the quantum deformation parameter is regarded and that the QRF is in a quantum superposition of semiclassical states. This result triggered the conjecture that the all-order Galilei $T$-matrix should supply the algebraic description of Galilei QRF transformations involving arbitrary quantum states for the QRF (see \cite{Ballesteros:2025ypr} for details).

The aim of this paper is to pave the way for the algebraic generalization of the previous result to the relativistic scenario by finding the quantum deformation of the (1+1) centrally extended Poincaré Lie algebra (with trivial central extension) whose non-relativistic limit gives the quantum Galilei algebra specified in \cite{Ballesteros:2025ypr}. Afterwards, the associated Poincaré $T$-matrix will be computed, and the contraction theory of Hopf algebra dual forms will be developed to obtain the Galilei $T$-matrix of \cite{Ballesteros:2025ypr} as its non-relativistic limit. It is worth stressing that, contrary to the Galilei case \cite{Opanowicz:1998zz, Opanowicz:1999qp, Ballesteros:1999ew}, no complete classification of the quantum deformations of the (1+1) centrally extended Poincaré Lie group exists in the literature (see \cite{Ballesteros:2018ghw} for partial results), hence the quest for the suitable one must be carried out from first principles. It should also be noted that the contraction theory of Lie bialgebras, Poisson--Lie groups, quantum algebras, and quantum groups is well-established \cite{Celeghini:1990bf, Celeghini:1990xx, Celeghini1992, Ballesteros:1994iv, Ballesteros:1994at}, however, to the best of our knowledge, its generalization to Hopf algebra dual forms is presented here for the first time.

The paper is structured as follows: In Section \ref{Sec2}, the basics of Hopf algebra dual forms are given. The case of $U(\mathfrak{g})$ is presented as the prominent example, and the universal $T$-matrix for the (1+1) timelike $\kappa$-Poincaré quantum algebra (without central extension) is computed in detail for the first time (the (1+1) and (3+1) lightlike cases were considered in \cite{Ballesteros1995, Ballesteros:1996awf}, respectively). The contraction theory of Hopf algebra dual forms is introduced in Section \ref{Sec3}. In Section \ref{Sec4}, the quantum deformation of the (1+1) centrally extended Poincaré Lie algebra is derived. Firstly, its Lie bialgebra structure is identified using the Lie bialgebra associated with the quantum Galilei algebra of \cite{Ballesteros:2025ypr} as a guideline. Secondly, the dual Poisson--Lie group quantization approach considered in \cite{Ballesteros2013} is employed to obtain the quantum Poincaré algebra. Finally, the Poincaré $T$-matrix is constructed, and the quantum Poincaré group is obtained. This computation is deferred to the \nameref{Appendix} to improve readability. In the adequate basis, this dual Hopf algebra is identified, interestingly, as a non-trivial central extension of the (1+1) spacelike $\kappa$-Poincaré quantum group. The contraction theory of Hopf algebra dual forms is then applied in Section \ref{Sec5} in order to prove that the Galilei $T$-matrix of \cite{Ballesteros:2025ypr} follows from the non-relativistic limit of the Poincaré $T$-matrix. Section \ref{Sec6} closes the paper with several comments and directions for future research.


\section{The universal \texorpdfstring{$T$}{T}-matrix} \label{Sec2}

The construction developed in the following is valid for any canonically paired dual Hopf algebras but the case where the Hopf algebra $H$ is the universal enveloping algebra $U(\mathfrak{g})$ of the Lie algebra $\mathfrak{g}$ of a Lie group $G$, then the dual Hopf algebra $H^*$ is the algebra of regular functions $\mathcal{O}(G)$ on the Lie group $G$, is particularly relevant, as previously stressed. Let us recall that, in the case of a Lie algebra, the universal $T$-matrix when considering its universal enveloping algebra as a cocommutative Hopf algebra results in the algebraic analogue of the usual exponential map from the Lie algebra to the Lie group and, in the case of a quantum algebra, in the sense of a Hopf algebra deformation of the universal enveloping algebra of a Lie algebra, the universal $T$-matrix can be interestingly understood as the quantum version of this exponential map.


\subsection{Universal \texorpdfstring{$T$}{T}-matrix for \texorpdfstring{$U(\mathfrak{g})$}{U(g)}}

Let $H$ be an arbitrary Hopf algebra over a field $\mathbb{K}$ with basis elements $X^\mu \in H$, and $H^*$ the Hopf algebra dual with basis elements $x_\mu \in H^*$. The dual pairing $\langle \cdot \,, \cdot \rangle: H^* \times H \rightarrow \mathbb{K}$ is such that
\begin{equation} \label{Duality}
     \langle x_\mu, X^\nu \rangle = \delta_\mu^\nu \ .
\end{equation}
As dual Hopf algebras, the product and coproduct in $H$ read
\begin{align}
    X^\mu X^\nu &= E_\rho^{\mu \nu} X^\rho \ , \label{EX} \\
    \Delta(X^\rho) &= F_{\mu \nu}^\rho X^\mu \otimes X^\nu \ , \label{FX}
\end{align}
while in $H^*$ read
\begin{align}
    x_\mu x_\nu &= F_{\mu \nu}^\rho x_\rho \ , \label{Fx} \\
    \Delta(x_\rho) &= E_{\rho}^{\mu \nu} x_\mu \otimes x_\nu \ . \label{Ex}
\end{align}
Notice that the structure tensor $E$ encodes the duality between the product in $H$ and the coproduct in $H^*$, namely
\begin{equation}
    \langle x_\rho, X^\mu X^\nu \rangle = E_\rho^{\mu \nu} = \langle \Delta(x_\rho), X^\mu \otimes X^\nu \rangle \ ,
\end{equation}
and that the structure tensor $F$ encodes the duality between the product in $H^*$ and the coproduct in $H$, namely
\begin{equation}
    \langle x_\mu x_\nu, X^\rho \rangle = F_{\mu \nu}^\rho = \langle x_\mu \otimes x_\nu, \Delta(X^\rho) \rangle \ .
\end{equation}
As a consequence of this duality of Hopf algebras, the universal $T$-matrix $T \in H \otimes H^*$ possesses the following property:
\begin{equation} \label{T-duality}
    T_{x', X} \cdot T_{x'', X} = T_{\Delta(x), X} \ , \quad T_{x, X'} \cdot T_{x, X''} = T_{x, \Delta(X)} \ ,
\end{equation}
where $T_{x, X} = T$ as defined in Eq.~\eqref{T0}, with $x' = x \otimes 1$, $x'' = 1 \otimes x$, $X' = X \otimes 1$, and $X'' = 1 \otimes X$.

Analogous relations concerning the remaining Hopf algebra maps are derived similarly. All Hopf algebras considered in this paper are specified in terms of the coproduct map and the commutation relations encoding the product map, with the remaining structures uniquely determined by the Hopf algebra axioms.

Let us now consider a $n$-dimensional real Lie algebra $\mathfrak{g} = \text{Lie}(G)$ with basis $\{X^1, X^2, \dots, X^n\} \allowbreak \subset \mathfrak{g}$. Its universal enveloping algebra $U(\mathfrak{g})$ can be canonically endowed with a cocommutative Hopf algebra structure. The basis elements of $U(\mathfrak{g})$ are, by the Poincaré--Birkhoff--Witt theorem \cite{Poincare1900, Birkhoff1937, Witt1937}, of the form $X^\mu := X^{\mu_1 \mu_2 \dots \mu_n} = (X^1)^{\mu_1} \, (X^2)^{\mu_2} \, \dots \, (X^n)^{\mu_n} \in U(\mathfrak{g)}$, with the exponents $\mu_1, \mu_2, \dots , \mu_n$ nonnegative integers. The Hopf algebra dual corresponds to the algebra of regular functions $\mathcal{O}(G)$ on the finite-dimensional real Lie group $G$. The basis elements of $\mathcal{O}(G)$ can be expressed as $x_\mu := x_{\mu_1 \mu_2 \dots \mu_n} \in \mathcal{O}(G)$. They can be defined by Hopf algebra duality, namely by the canonical pairing $\langle \cdot \,, \cdot \rangle: \mathcal{O}(G) \times U(\mathfrak{g}) \rightarrow \mathbb{R}$ such that
\begin{equation}
     \langle x_\mu, X^\nu \rangle = \delta_\mu^\nu := \delta_{\mu_1}^{\nu_1} \, \delta_{\mu_2}^{\nu_2} \, \dots \delta_{\mu_n}^{\nu_n} \ .
\end{equation}

In this case, this canonical pairing $\langle \cdot \,, \cdot \rangle: \mathcal{O}(G) \times U(\mathfrak{g}) \rightarrow \mathbb{R}$ is explicitly given by
\begin{equation}
    \langle f \,, X \rangle = (\alpha(X) f)_e \ ,
\end{equation}
for all $X \in U(\mathfrak{g})$ and $f \in \mathcal{O}(G)$, where $e$ is the identity element of the Lie group $G$ and $\alpha$ is the unique anti-homomorphism $\alpha: U(\mathfrak{g}) \rightarrow \text{End}(\mathcal{O}(G))$, which is an extension of the anti-homomorphism $\alpha: \mathfrak{g} \rightarrow \text{End}(\mathcal{O}(G))$ that assigns to an element $X \in \mathfrak{g} \cong T_e G$ its associated right-invariant vector field, namely
\begin{equation}
    (\alpha(X))_g = X_g^R := ((R_g)_*)_e X \ ,
\end{equation}
where $R_g: G \rightarrow G$ is the right-multiplication by $g \in G$ and $((R_g)_*)_e: T_e G \rightarrow T_g G$ is the pushforward of $R_g$ at the identity. As a consequence, through $\alpha$, $X \in \mathfrak{g}$ acts as a differential operator on $\mathcal{O}(G)$ as
\begin{equation}
    X_g^R f = \left. \frac{d}{d \varepsilon} f(e^{\varepsilon X} g) \right|_{\varepsilon = 0} \ ,
\end{equation}
and, by extension, $X \in U(\mathfrak{g})$ acts generally as a higher-order differential operator on $\mathcal{O}(G)$.

Let the Lie group $G$ be connected and simply connected for simplicity. An arbitrary element $g \in G$ can be expressed as
\begin{equation}
    e^{\tilde{x}_i X^i} = e^{\tilde{x}_1 X^1 + \tilde{x}_2 X^2 + \dots + \tilde{x}_n X^n} \ ,
\end{equation}
where $\tilde{x}_i \in \mathbb{R}$ are the coordinates on the Lie group manifold and $X^i \in \mathfrak{g}$ are the generators of the Lie algebra $\mathfrak{g} = \text{Lie}(G)$. We can also see the arbitrary element $g \in G$ as an element of $\mathcal{O}(G) \otimes U(\mathfrak{g})$ instead, where $\tilde{x}_i \in \mathcal{O}(G)$ are the coordinate functions on the Lie group such that $\tilde{x}_i(g) \in \mathbb{R}$ and $X^i \in U(\mathfrak{g})$ are the Lie algebra generators embedded in the universal enveloping algebra. Then,
\begin{equation}
    \begin{aligned}
        \langle e^{\tilde{x}_i \otimes X^i} \,, X^\mu \rangle &= (\alpha(X^\mu) e^{\tilde{x}_i \otimes X^i})_e \\
        &= (X^\mu e^{\tilde{x}_i \otimes X^i})_e \\
        &= X^\mu e^{\tilde{x}_i(e) \otimes X^i} \\
        &= X^\mu \ .
    \end{aligned}
\end{equation}
On the other hand,
\begin{equation}
    \begin{aligned}
        \langle T \,, X^\mu \rangle &= \langle x_\nu \otimes X^\nu \,, X^\mu \rangle \\
        &= \langle x_\nu \,, X^\mu \rangle X^\nu \\
        &= \delta_\nu^\mu X^\nu \\
        &= X^\mu \ ,
    \end{aligned}
\end{equation}
hence the universal $T$-matrix for $U(\mathfrak{g})$ is \cite{Fronsdal:1991gf, Fronsdal1994}
\begin{equation} \label{T}
    T = e^{\tilde{x}_i \otimes X^i} \ .
\end{equation}
The Baker--Campbell--Hausdorff formula allows in this case to factorize Eq.~\eqref{T} formally as
\begin{equation}
    T = e^{x_1 \otimes X_1} \, e^{x_2 \otimes X_2} \, \dots \, e^{x_n \otimes X_n} \ ,
\end{equation}
where the $x_i$ are functions of the $\tilde{x}_i$. Finally, since the basis elements of $U(\mathfrak{g})$ are, by the Poincaré--Birkhoff--Witt theorem \cite{Poincare1900, Birkhoff1937, Witt1937}, of the form $X^\mu := X^{\mu_1 \mu_2 \dots \mu_n} = (X^1)^{\mu_1} \, (X^2)^{\mu_2} \, \dots \, (X^n)^{\mu_n}$, then the basis elements of $\mathcal{O}(G)$ are explicitly of the form
\begin{equation}
    x_\mu := x_{\mu_1 \mu_2 \dots \mu_n} = \frac{x_1^{\mu_1}}{\mu_1!} \, \frac{x_2^{\mu_2}}{\mu_2!} \, \dots \, \frac{x_n^{\mu_n}}{\mu_n!} \ .
\end{equation}


\subsection{Universal \texorpdfstring{$T$}{T}-matrix for the (1+1) timelike \texorpdfstring{$\kappa$}{κ}-Poincaré quantum algebra}

As already commented in the Introduction, the Hopf algebra dual form can consequently be considered the canonical presentation of a quantum group. The universal $T$-matrix exhibits indeed a ``group-like'' property in a representation-independent way in the sense of Eq.~\eqref{T-duality}.

We compute in detail in the following the universal $T$-matrix for a particular Hopf algebra deformation of physical relevance: the (1+1)-dimensional timelike $\kappa$-Poincaré quantum algebra \cite{Lukierski:1991pn, Lukierski:1991ff, Giller:1992xg, Lukierski:1992dt, Zakrzewski:1994hlc}. This case will explicitly illustrate how to construct Hopf algebra dual forms in the quantum group setting. We begin by considering the (1+1) Poincaré Lie algebra,
\begin{equation} \label{Poincaré LA}
    [P_0, P_1] = 0 \ , \quad [K, P_0] = P_1 \ , \quad [K, P_1] = P_0 \ ,
\end{equation}
with $P_0$, $P_1$, and $K$ representing the generators of time translations, space translations, and boosts, respectively. The case we will explicitly work out is the Hopf algebra deformation of the universal enveloping algebra of the (1+1) Poincaré Lie algebra obtained as the quantization of the coboundary Lie bialgebra generated by the standard $r$-matrix $r = \omega K \wedge P_1$. The commutation relations are
\begin{equation}
    [P_0, P_1] = 0 \ , \quad [K, P_0] = P_1 \ , \quad [K, P_1] = \frac{\sinh{(2 \omega P_0)}}{2 \omega} \ ,
\end{equation}
and the compatible coproducts are
\begin{equation}
    \begin{aligned}
        \Delta(P_0) &= P_0 \otimes 1 + 1 \otimes P_0 \ , \\
        \Delta(P_1) &= P_1 \otimes e^{\omega P_0} + e^{- \omega P_0} \otimes P_1 \ , \\
        \Delta(K) &= K \otimes e^{\omega P_0} + e^{- \omega P_0} \otimes K \ .
    \end{aligned}
\end{equation}
Notice that, in this basis, the exponentials appear on both sides of the tensor products in the deformed coproducts. It is convenient, however, to introduce the following change of basis:
\begin{equation}
    P_0 \rightarrow P_0 \ , \quad P_1 \rightarrow e^{\omega P_0} P_1 \ , \quad K \rightarrow e^{\omega P_0} K \ ,
\end{equation}
so that, in this basis, the commutation relations read
\begin{equation} \label{Poincaré QA(a)}
    [P_0, P_1] = 0 \ , \quad [K, P_0] = P_1 \ , \quad [K, P_1] = \frac{e^{4 \omega P_0} - 1}{4 \omega} + \omega P_1^2 \ ,
\end{equation}
and the coproducts read
\begin{equation} \label{Poincaré QA(b)}
    \begin{aligned}
        \Delta(P_0) &= P_0 \otimes 1 + 1 \otimes P_0 \ , \\
        \Delta(P_1) &= P_1 \otimes e^{2 \omega P_0} + 1 \otimes P_1 \ , \\
        \Delta(K) &= K \otimes e^{2 \omega P_0} + 1 \otimes K \ ,
    \end{aligned}
\end{equation}
where the exponentials appear or on the left-hand side or the right-hand side of the tensor products in the deformed coproducts.

The basis elements of this quantum universal enveloping algebra are chosen in the following order:
\begin{equation}
    X^{a b c} = P_0^a P_1^b K^c \ ,
\end{equation}
with the dual basis elements defined by Eq.~\eqref{Duality}. In particular, $X^{1 0 0} = P_0$, $X^{0 1 0} = P_1$, $X^{0 0 1} = K$, so that the dual coordinate functions on the quantum group are defined locally as $x_{1 0 0} = a_0$, $x_{0 1 0} = a_1$, $x_{0 0 1} = \chi$. We have that
\begin{equation}
    \begin{aligned}
        \Delta(X^{a b c}) &= \Delta(P_0) \Delta(X^{(a - 1) b c}) \ , \\
        \Delta(X^{a b c}) &= \Delta(P_1) \Delta(X^{a (b - 1) c}) \ ,  \\
        \Delta(X^{a b c}) &= \Delta(X^{a b (c - 1)}) \Delta(K) \ ,
    \end{aligned}
\end{equation}
which yield the following recurrence relations for the structure tensor $F$ from Eq.~\eqref{FX}:
\begin{equation} \label{F1}
    \begin{aligned}
        F_{i j k ; p q r}^{a b c} &= F_{(i - 1) j k ; p q r}^{(a - 1) b c} + F_{i j k ; (p - 1) q r}^{(a - 1) b c} \ , \\
        F_{i j k ; p q r}^{a b c} &= \sum_{n = 0}^p F_{i (j - 1) k ; n q r}^{a (b - 1) c} \frac{(2 \omega)^{p - n}}{(p - n)!} + F_{i j k ; p (q - 1) r}^{a (b - 1) c} \ . \\
    \end{aligned}
\end{equation}
The recurrence relation coming from $\Delta(X^{a b c}) = \Delta(X^{a b (c - 1)}) \Delta(K)$ is very difficult to find in general. We can show, specifically, that 
\begin{equation} \label{F2}
    \begin{aligned}
        F_{1 0 0 ; p q r}^{a b c} &= a \, \delta_{p + 1}^a \, \delta_q^b \, \delta_r^c \ , \\
        F_{i j k ; 0 1 0}^{a b c} &= b \, \delta_i^a \, \delta_{j + 1}^b \, \delta_k^c \ , \\
        F_{i j k ; 0 0 1}^{a b c} &= c \, \delta_i^a \, \delta_j^b \, \delta_{k + 1}^c \ . \\
    \end{aligned}
\end{equation}

The above components of the structure tensor $F$ allow us to compute the dual basis. Note that, taking into account Eq.~\eqref{Fx} together with Eq.~\eqref{F2},
\begin{equation}
    x_{1 0 0} x_{(p - 1) q r} = F_{1 0 0 ; (p - 1) q r}^{a b c} x_{a b c} = p \, x_{p q r} \ ,
\end{equation}
hence
\begin{equation}
    x_{p q r} = \frac{a_0}{p} x_{(p - 1) q r} = \dots = \frac{a_0^p}{p!} x_{0 q r} \ .
\end{equation}
We obtain, repeating the same strategy, that the dual basis elements are given by
\begin{equation}
	x_{a b c} = \frac{a_0^a}{a!} \frac{a_1^b}{b!} \frac{\chi^c}{c!} \ .
\end{equation}
All of this allows to demonstrate the following proposition:
\begin{proposition}
    The universal $T$-matrix \eqref{T0} for the (1+1) timelike $\kappa$-Poincaré quantum algebra is
    \begin{equation} \label{Poincaré T}
        T = e^{a_0 \otimes P_0} e^{a_1 \otimes P_1} e^{\chi \otimes K} \ .
    \end{equation}
\end{proposition}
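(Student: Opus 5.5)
The plan is to start from the definition \eqref{T0}, $T = x_{abc} \otimes X^{abc}$, summed over all nonnegative $a,b,c$, with the PBW-type basis $X^{abc} = P_0^a P_1^b K^c$. Once the dual basis elements are known to factorize as
\[
x_{abc} = \frac{a_0^a}{a!}\,\frac{a_1^b}{b!}\,\frac{\chi^c}{c!} ,
\]
with the ordering $a_0, a_1, \chi$ of the factors matching the ordering $P_0, P_1, K$ in the basis, the sum factorizes as
\[
T = \sum_{a,b,c} \Bigl(\tfrac{a_0^a}{a!} \otimes P_0^a\Bigr)\Bigl(\tfrac{a_1^b}{b!} \otimes P_1^b\Bigr)\Bigl(\tfrac{\chi^c}{c!} \otimes K^c\Bigr) .
\]
This uses only the multiplication rule $(u\otimes U)(v\otimes V) = uv\otimes UV$ in $H^*\otimes H$. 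Summing each index independently then gives \eqref{Poincaré T} as a formal power series. No commutativity of $a_0,a_1,\chi$ is needed, because the order of the factors is preserved throughout.

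The substance of the proof is therefore the formula for $x_{abc}$, which I would establish in three steps.

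First, I would justify the three identities \eqref{F2}.

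For $F_{100;pqr}^{abc}$, I expand $\Delta(P_0^a P_1^b K^c)$ and extract the coefficient of $P_0\otimes P_0^{p}P_1^qK^r$. Since $\Delta(P_0)$ is primitive, the only way to get a single $P_0$ with no $P_1$ or $K$ in the left factor is to pick $P_0\otimes 1$ from exactly one of the $a$ factors $\Delta(P_0)$ and the right-hand term from every other factor. This works because $\Delta(P_1)$ and $\Delta(K)$ contribute $1$ on the left only through $1\otimes P_1$ and $1\otimes K$. The right-hand side then remains in normal order, and there are $a$ choices.

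For $F_{ijk;010}^{abc}$ and $F_{ijk;001}^{abc}$, the argument is the analogous one on the right leg. To obtain a single $P_1$ (resp.\ $K$) on the right with no $P_0$, every factor except one must contribute the term with $1$ on the right. For $\Delta(P_1)$ that term is $P_1\otimes e^{2\omega P_0}$ only at zeroth order in $P_0$, i.e.\ the term $P_1\otimes 1$. The chosen factor contributes $1\otimes P_1$ (resp.\ $1\otimes K$).

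The main obstacle is checking that the left factor then equals $X^{ijk}$ exactly, with no reordering corrections. Here I expect to need that the basis elements to the left of the chosen factor produce only powers of $P_0$ on the right, via $e^{2\omega P_0}$, and these must be set to zero order. So no commutators between $K$ or $P_1$ arise in the left leg except through their original order. This yields the multiplicities $b$ and $c$.

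Second, using \eqref{Fx}, I would derive the products
\[
x_{100}x_{(p-1)qr} = p\,x_{pqr}, \qquad x_{p(q-1)r}x_{010} = q\,x_{pqr}, \qquad x_{pq(r-1)}x_{001} = r\,x_{pqr} .
\]
The second and third multiply on the right, to match the form of the last two lines of \eqref{F2}.

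Third, I would iterate these relations starting from $x_{000}=1$, the counit dual to $1$. This gives
\[
x_{pqr} = \frac{a_0^p}{p!}\,x_{0qr} = \frac{a_0^p}{p!}\frac{a_1^q}{q!}\,x_{00r} = \frac{a_0^p}{p!}\frac{a_1^q}{q!}\frac{\chi^r}{r!} ,
\]
where the order $a_0,a_1,\chi$ of the products is exactly dictated by the left/right placement in the recursions.

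Finally, I substitute this into the factorized sum described above, which completes the argument.
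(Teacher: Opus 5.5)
\textbf{There is a genuine gap in the second line of \eqref{F2} and in the iteration built on it.} Your overall route is the paper's: establish \eqref{F2}, turn it into product rules for the $x_{abc}$ via \eqref{Fx}, iterate, then factor the sum. Your factorization of $T$ and your argument for the first line of \eqref{F2} are fine.

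The problem is that the second line of \eqref{F2}, stated for all $ijk$, is false when $k\geq 1$, and your justification looks for reordering in the wrong leg. The left leg is automatically the PBW monomial $P_0^iP_1^jK^k$, because the factors $\Delta(P_0)^a\Delta(P_1)^b\Delta(K)^c$ already come in PBW order. The reordering happens in the right leg. There, an $e^{2\omega P_0}$ coming from a $K\otimes e^{2\omega P_0}$ term sits to the right of the $K$'s supplied by earlier $1\otimes K$ terms. The relation $Ke^{2\omega P_0}=e^{2\omega P_0}K+2\omega P_1e^{2\omega P_0}$ then produces the basis element $P_1$ without any factor contributing $1\otimes P_1$.

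For instance, the $K\otimes(\cdot)$ part of $\Delta(K^2)$ is $K\otimes(e^{2\omega P_0}K+Ke^{2\omega P_0})$, so $F^{002}_{001;010}=2\omega\neq 0$. This is exactly the component the paper later uses to obtain $[\chi,a_1]=2\omega(\cosh\chi-1)$. Consequently your step-two rule $x_{p(q-1)r}x_{010}=q\,x_{pqr}$ fails for $r\geq 1$. At $(p,q,r)=(0,1,1)$ it would give $\chi a_1=x_{011}$, whereas in fact $\chi a_1=x_{011}+2\omega(\cosh\chi-1)$ (only $a_1\chi=x_{011}$ holds).

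Discarding the $P_0$-dependence of the exponentials before normal ordering is legitimate for the degree-zero component of the right leg, which is why the third line of \eqref{F2} survives. It is not legitimate for the degree-one component $P_1$, because $[K,P_0]=P_1$ lowers the $K$-count without lowering the degree.

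\textbf{Your step three also does not follow from step two.} The equality $x_{0qr}=\frac{a_1^q}{q!}x_{00r}$ needs $a_1$ acting from the left, but your rule multiplies by $a_1$ on the right. Iterating the rule as written, starting from $x_{00r}$, gives $\frac{\chi^r}{r!}\frac{a_1^q}{q!}$. That differs from the claimed element, since $\chi$ and $a_1$ do not commute.

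\textbf{The target formula is still correct, and the repair is short.}
\begin{itemize}
\item First strip $\chi$ on the right using the third line of \eqref{F2}, which does hold for all $ijk$. Give $P_0,P_1$ degree one and $K$ degree zero. The relations $[K,P_0]=P_1$, $[K,P_1]=\frac{e^{4\omega P_0}-1}{4\omega}+\omega P_1^2$ and $[K,e^{2\omega P_0}]=2\omega P_1e^{2\omega P_0}$ never lower the degree. So the degree-zero part of a right leg can be computed by setting $e^{2\omega P_0}\to 1$ and $P_0,P_1\to 0$ before reordering. This yields $x_{pqr}=x_{pq0}\,\frac{\chi^r}{r!}$.
\item Then use the second line of \eqref{F2} only at $k=0$. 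There the right leg $P_0^{a-i}e^{2\omega jP_0}P_1^{b-j}K^c$ is already normal-ordered, and you get $x_{pq0}=x_{p00}\,\frac{a_1^q}{q!}$.
\item Finish with $x_{p00}=\frac{a_0^p}{p!}$ from the first line.
\end{itemize}
Alternatively, verify directly that $F^{abc}_{010;0qr}=b\,\delta^a_0\delta^b_{q+1}\delta^c_r$. This is the left-multiplication rule your chain actually uses, and it holds because the relevant right leg $P_0^a e^{2\omega P_0}P_1^{b-1}K^c$ is normal-ordered.

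The paper states the second line of \eqref{F2} with the same over-generality, but only its $k=0$ instances are needed.
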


As already commented, by considering this example, we explicitly see that the construction of the Hopf algebra dual form is just the quantum group analogue of the exponential map from the Lie algebra to the Lie group, and the universal $T$-matrix \eqref{Poincaré T} is simply the timelike $\kappa$-Poincaré quantum group version of the Lie group element in a neighborhood of the identity.

We can again consider Eq.~\eqref{Fx} to derive the commutation relations for the coordinates on the dual quantum group as
\begin{equation}
    [x_{i j k}, x_{p q r}] = (F_{i j k ; p q r}^{a b c} - F_{p q r ; i j k}^{a b c}) x_{a b c} \ .
\end{equation}
The commutation relation $[a_0, a_1]$ is obtained from the recurrence relations \eqref{F1}, but the remaining two, $[\chi, a_0]$ and $[\chi, a_1]$, require the third recurrence relation, which is hard to find in general. There is no other way but looking for the specific components of the structure tensor $F$ involved in such commutation rules.

In the case of $[\chi, a_0]$, $F_{0 0 1 ; 1 0 0}^{a b c}$ is, by duality, the coefficient of the term $K \otimes P_0$ in the coproduct of $X^{a b c}$. This term appears when $a = 1$, $b = 0$, $c = 1$, as well as when $a = 0$, $b = 0$, and $c$ arbitrary. One can show that $F_{0 0 1 ; 1 0 0}^{1 0 1} = 1$ and $F_{0 0 1 ; 1 0 0}^{0 0 m} = 2 \omega$ for $m = 2 n + 1$ (with $n = 0, 1, 2, \dots$). On the other hand, $F_{1 0 0 ; 0 0 1}^{a b c}$ is given by Eq.~\eqref{F2}.

In the case of $[\chi, a_1]$, $F_{0 0 1 ; 0 1 0}^{a b c}$ is, by duality, the coefficient of the term $K \otimes P_1$ in the coproduct of $X^{a b c}$. This term appears when $a = 0$, $b = 1$, $c = 1$, as well as when $a = 0$, $b = 0$ and $c$ arbitrary. One can show that $F_{0 0 1 ; 0 1 0}^{0 1 1} = 1$ and $F_{0 0 1 ; 0 1 0}^{0 0 m} = 2 \omega$ for $m = 2 n$ (with $n = 1, 2, \dots$). On the other hand, $F_{0 1 0 ; 0 0 1}^{a b c}$ is given by Eq.~\eqref{F2}.

We finally end up with the following commutation relations for the coordinate functions on the quantum group:
\begin{equation} \label{Poincaré QG(a)}
    [a_0, a_1] = - 2 \omega a_1 \ , \quad [\chi, a_0] = 2 \omega \sinh\chi \ , \quad [\chi, a_1] = 2 \omega (\cosh\chi - 1) \ .
\end{equation}
The coproducts could also be derived similarly. We can, however, take advantage of the universal $T$-matrix \eqref{Poincaré T}. In particular, by taking into account the characteristic property of Hopf algebra dual forms described in Section \ref{Sec2}, the coproducts will follow from the corresponding ``group-like'' property of the universal $T$-matrix in the sense of Eq.~\eqref{T-duality}. Its application is simpler when realizing the universal $T$-matrix as a true matrix since it reduces to a matrix multiplication. We will consider this approach and introduce the fundamental representation of the (1+1) timelike $\kappa$-Poincaré algebra \eqref{Poincaré QA(a)} given by\footnote{This is also the fundamental representation of the (1+1) Poincaré Lie algebra \eqref{Poincaré LA}.}
\begin{equation}
    \rho(P_0) = \left(\begin{array}{ccc}
    0 & 0 & 0 \\
    1 & 0 & 0 \\
    0 & 0 & 0
    \end{array}\right) \ , \quad 
    \rho(P_1) = \left(\begin{array}{ccc}
    0 & 0 & 0 \\
    0 & 0 & 0 \\
    1 & 0 & 0
    \end{array}\right) \ , \quad 
    \rho(K) = \left(\begin{array}{ccc}
    0 & 0 & 0 \\
    0 & 0 & 1 \\ 
    0 & 1 & 0
    \end{array}\right) \ .
\end{equation}
As a consequence, the realization of the $T$-matrix \eqref{Poincaré T} is
\begin{equation}
    T^\rho := (1 \otimes \rho) \, T = e^{a_0 \otimes \rho(P_0)} e^{a_1 \otimes \rho(P_1)} e^{\chi \otimes \rho(K)} = \left(\begin{array}{ccc}
    1 & 0 & 0 \\
    a_0 & \cosh\chi & \sinh\chi \\
    a_1 & \sinh\chi & \cosh\chi
    \end{array}\right) \ ,
\end{equation}
which is just the Poincaré Lie group element in a neighborhood of the identity where now entries are noncommutative. It can be considered locally as the matrix representation of the (1+1) timelike $\kappa$-Poincaré quantum group element. The coproduct map for its coordinate functions is read off from the matrix multiplication of two copies of $T^\rho$. The result is
\begin{equation} \label{Poincaré QG(b)}
    \begin{aligned}
        \Delta(a_0) & = a_0 \otimes 1 + \cosh\chi \otimes a_0 + \sinh\chi \otimes a_1 \ , \\
        \Delta(a_1) &= a_1 \otimes1 + \cosh\chi \otimes a_1 + \sinh\chi \otimes a_0 \ , \\
        \Delta(\chi) &= \chi \otimes 1 + 1 \otimes \chi \ ,
    \end{aligned} 
\end{equation}
thus encoding the (classical) Poincaré group composition law for the local coordinates.

The Hopf algebra dual form permits dualizing this procedure. We can instead introduce the fundamental representation of the algebra of (1+1) timelike $\kappa$-Poincaré group coordinate functions \eqref{Poincaré QG(a)} given by
\begin{equation}
    \sigma(a_0) = \left(\begin{array}{ccc}
    0 & 0 & 0 \\
    0 & 2 \omega & 0 \\
    0 & 0 & 2 \omega
    \end{array}\right) \ , \quad
   \sigma(a_1) = \left(\begin{array}{ccc}
    0 & 1 & 0 \\
    0 & 0 & 0 \\
    0 & 0 & 0
    \end{array}\right) \ , \quad
    \sigma(\chi) = \left(\begin{array}{ccc}
    0 & 0 & 1 \\
    0 & 0 & 0 \\
    0 & 0 & 0
    \end{array}\right) \ . 			
\end{equation}
The corresponding realization of the $T$-matrix \eqref{Poincaré T} is
\begin{equation}
    T^{\sigma} := (\sigma \otimes 1) \, T = e^{\sigma(a_0) \otimes P_0} e^{\sigma(a_1) \otimes P_1} e^{\sigma(\chi) \otimes K} = \left(\begin{array}{ccc}
    1 & P_1 & K \\
    0 & e^{2 \omega P_0} & 0 \\
    0 & 0 & e^{2 \omega P_0}
    \end{array}\right) \ ,
\end{equation}
which is regarded locally as the matrix representation of the (1+1) timelike $\kappa$-Poincaré dual quantum group element (with noncommutative entries). One can check (by the matrix multiplication of two copies of $T^{\sigma}$) that the coproduct of its coordinate functions, now the quantum algebra generators, is that given in Eq.~\eqref{Poincaré QA(b)}.

The concept of Lie bialgebra will be particularly relevant for the scopes of this work. Although the formal definition is given in the next section, let us now write the Lie bialgebra associated with the $\kappa$-Poincaré quantum algebra considered in this section. This Poincaré Lie bialgebra is characterized by the Lie algebra \eqref{Poincaré LA}, with the Lie bialgebra structure given by a cocommutator map $\delta$. The cocommutator corresponds to (the skew-symmetric part of) the first order of the coproduct in the deformation parameter, namely
\begin{equation}
    \delta = \frac{1}{2} (\Delta - \tau \circ \Delta) \quad \mathrm{mod} \, \alpha^2 \ ,
\end{equation}
where $\tau$ is the flip operator, \textit{i.e.}~$\tau(X \otimes Y) = Y \otimes X$, and with $\alpha$ the deformation parameter. The (1+1) timelike $\kappa$-Poincaré Lie bialgebra has as cocommutator the first order of Eq.~\eqref{Poincaré QA(b)} in $\omega$, namely
\begin{equation} \label{Poincaré LC}
    \begin{aligned}
        \delta(P_0) &= 0 \ , \\
        \delta(P_1) &= \omega P_1 \wedge P_0 \ , \\
        \delta(K) &= \omega K \wedge P_0 \ .
    \end{aligned}
\end{equation}
In the case of coboundary Lie bialgebras, as is this case, the cocommutator map is such that $\delta(X) = \mathrm{ad}_X r, \, \forall X \in \mathfrak{g}$, given an $r$-matrix $r \in \mathfrak{g} \wedge \mathfrak{g}$ solution of the classical Yang--Baxter equation (non-standard) or the modified classical Yang--Baxter equation (standard). Otherwise, Lie bialgebras that do not admit such an $r$-matrix are called non-coboundary.

We recall that the Hopf algebra dual form obtained in this section corresponds to the timelike deformation of the Poincaré Lie algebra generated by the (standard) $r$-matrix $r = \omega K \wedge P_1$. Notice that, in the Lie bialgebra, the cocommutator of the time translation generator is zero, hence the coproduct is primitive in the quantum algebra. The very same construction of this section for the spacelike deformation, the one generated by the (standard) $r$-matrix $r = \omega K \wedge P_0$, is straightforward by swapping the role of $P_0$ and $P_1$. Finally, the last case, the lightlike deformation, coming from the (non-standard) $r$-matrix $r = \omega K \wedge (P_0 + P_1)$, was fully constructed in \cite{Ballesteros1995}.


\section{Inönü--Wigner contractions of the universal \texorpdfstring{$T$}{T}-matrix} \label{Sec3}

Inönü--Wigner contractions \cite{Inonu:1953sp} of quantum algebras and quantum groups are considered in this section. We first recall the theory of Lie bialgebra contractions and Poisson--Lie groups which, based on previous results given in \cite{Celeghini:1990bf, Celeghini:1990xx, Celeghini1992}, was completely developed in \cite{Ballesteros:1994iv, Ballesteros:1994at}. Afterwards, new concepts concerning contractions of multiparametric Lie bialgebras are provided. Finally, we introduce for the first time a formal contraction theory of Hopf algebra dual forms. Throughout this section, we adopt the conventions and notation of \cite{Ballesteros:1994iv, Ballesteros:1994at}.


\subsection{Lie bialgebra contractions}

Let us first recall the concept of contractions of a (finite-dimensional) Lie algebra $\mathfrak{g} := (V, [\cdot, \cdot])$.

\begin{definition}
    A Lie algebra $\mathfrak{g}'$ is a contraction of $\mathfrak{g}$ (with the same underlying vector space $V$) if there exists a one-parametric family of Lie algebra automorphisms $\phi_\varepsilon: \mathfrak{g} \rightarrow \mathfrak{g}$ such that the Lie bracket in $\mathfrak{g}$ defines the Lie bracket in $\mathfrak{g}'$ as the limit
    \begin{equation} \label{Lim1}
        [X, Y]' := \lim_{\varepsilon \to 0} \phi_\varepsilon^{-1} [\phi_\varepsilon(X), \phi_\varepsilon(Y)], \, \forall X, Y \in \mathfrak{g} \ .
    \end{equation}
\end{definition}

We will assume that the automorphisms $\phi_\varepsilon$ are polynomials on the contraction parameter $\varepsilon$, as usual.

We consider the contraction from the (1+1) Poincaré Lie algebra to the (1+1) Galilei Lie algebra as an example in the following. In this case, the contraction parameter $\varepsilon$ will have a particular physical interpretation. It will be related to $c$, \textit{i.e.}~the speed of light. Specifically, $\varepsilon = c^{-1}$, such that the contraction limit $\varepsilon \to 0$ corresponds to the non-relativistic limit $c \to + \infty$.

\begin{example} \label{Ex1}
    The (1+1) Poincaré Lie algebra (Eq.~\eqref{Poincaré LA}) is
    \begin{equation}
        [P_0, P_1] = 0 \ , \quad [K, P_0] = P_1 \ , \quad [K, P_1] = P_0 \ .
    \end{equation}
    The following contraction map is introduced now:
    \begin{equation} \label{C}
        \phi_c(P_0) = P_0 \ , \quad \phi_c(P_1) = c^{-1} P_1 \ , \quad \phi_c(K) = c^{-1} K \ ,
    \end{equation}
    hence Eq.~\eqref{Lim1} reads
    \begin{equation}
        [P_0, P_1]' = \lim_{c \to + \infty} 0 \ , \quad [K, P_0]' = \lim_{c \to + \infty} P_1 \ , \quad [K, P_1]' = \lim_{c \to + \infty} c^{-2} P_0 \ ,
    \end{equation}
    and the limit $c \to + \infty$ renders
    \begin{equation}
        [P_0, P_1]' = 0 \ , \quad [K, P_0]' = P_1 \ , \quad [K, P_1]' = 0 \ ,
    \end{equation}
    which is the (1+1) Galilei Lie algebra.
\end{example}

This contraction procedure for Lie algebras generalizes to Lie bialgebras:

\begin{definition}
    A Lie bialgebra $(\mathfrak{g}, \delta)$ is a Lie algebra $\mathfrak{g}$ with Lie bracket $[\cdot, \cdot]$: $\mathfrak{g} \wedge \mathfrak{g} \rightarrow \mathfrak{g}$ endowed with a cocommutator map $\delta$: $\mathfrak{g} \rightarrow \mathfrak{g} \wedge \mathfrak{g}$ such that:
    \begin{itemize}
        \item[i)] The cocommutator $\delta$ is a 1-cocycle, \textit{i.e.}
        \begin{equation} \label{1-cocycle}
            \delta([X, Y]) = \mathrm{ad}_X \delta(Y) - \mathrm{ad}_Y \delta(X) \ , \quad \forall X, Y \in \mathfrak{g} \ .
        \end{equation}
        \item[ii)] The dual of the cocommutator $\delta^*$: $\mathfrak{g}^* \wedge \mathfrak{g}^* \rightarrow \mathfrak{g}^*$ satisfies the Jacobi identity, so that $\mathfrak{g}^* := (V^*, \delta^*)$ is a Lie algebra. We say that $\mathfrak{g}^*$ is the dual Lie algebra of $\mathfrak{g}$.
    \end{itemize}
\end{definition}

Note that, by construction, the Lie algebra $\mathfrak{g}^*$ is endowed with a cocommutator map $[\cdot, \cdot]^*: \mathfrak{g}^* \rightarrow \mathfrak{g}^* \wedge \mathfrak{g}^*$ whose dual is the Lie bracket $[\cdot, \cdot]: \mathfrak{g} \wedge \mathfrak{g} \rightarrow \mathfrak{g}$, so that $(\mathfrak{g}^*, [\cdot, \cdot]^*)$ is a Lie bialgebra. We say that $(\mathfrak{g}^*, [\cdot, \cdot]^*)$ is the dual Lie bialgebra of $(\mathfrak{g}, \delta)$.

The theory of Lie bialgebra contractions is based on the following proposition:

\begin{proposition} \label{LBC}
    \cite{Ballesteros:1994at} Let $(\mathfrak{g}, \eta)$ be a Lie bialgebra and let the Lie algebra $\mathfrak{g}'$ be a contraction of $\mathfrak{g}$ defined by the map $\phi_\varepsilon$. If $n$ is any real number such that the limit
    \begin{equation} \label{Lim2}
        \eta' := \lim_{\varepsilon \to 0} \varepsilon^n (\phi_\varepsilon^{-1} \otimes \phi_\varepsilon^{-1}) \circ \eta \circ \phi_\varepsilon
    \end{equation}
    exists, then $(\mathfrak{g}', \eta')$ is a Lie bialgebra. Additionally, there exists a minimal fixed value $n_0$ of $n$ such that for $n \geq n_0$ the limit \eqref{Lim2} exists and for $n > n_0$ the limit is zero.
\end{proposition}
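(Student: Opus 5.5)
We need to prove that if the limit $\eta'$ exists then it is a cocommutator for $\mathfrak{g}'$, and also establish the existence of the minimal exponent $n_0$. The plan is to treat $\eta_\varepsilon := \varepsilon^n (\phi_\varepsilon^{-1}\otimes\phi_\varepsilon^{-1})\circ\eta\circ\phi_\varepsilon$ as a one-parameter family of cocommutators, one for each transported Lie bracket. The Lie bialgebra axioms are polynomial identities in the structure constants, so they survive the limit $\varepsilon\to 0$.

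\emph{Step 1: transported bialgebras.} For fixed $\varepsilon\neq 0$, set $[X,Y]_\varepsilon := \phi_\varepsilon^{-1}[\phi_\varepsilon X,\phi_\varepsilon Y]$. Since $\phi_\varepsilon$ is invertible for $\varepsilon\neq 0$, it is a Lie algebra isomorphism from $(V,[\cdot,\cdot]_\varepsilon)$ onto $\mathfrak{g}$. Transporting $\eta$ along it gives $(\phi_\varepsilon^{-1}\otimes\phi_\varepsilon^{-1})\circ\eta\circ\phi_\varepsilon$, which is therefore a cocommutator for $[\cdot,\cdot]_\varepsilon$. I will check the cocycle condition \eqref{1-cocycle} directly by applying $\phi_\varepsilon^{-1}\otimes\phi_\varepsilon^{-1}$ to the cocycle identity for $\eta$ at $\phi_\varepsilon X,\phi_\varepsilon Y$. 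The Jacobi identity for the dual bracket holds because the dual map is $\eta^*$ conjugated by $\phi_\varepsilon^*$. Both conditions are homogeneous in $\eta$: the cocycle condition is linear and dual Jacobi is quadratic. Hence multiplying by $\varepsilon^n$ preserves them, and $(V,[\cdot,\cdot]_\varepsilon,\eta_\varepsilon)$ is a Lie bialgebra for every $\varepsilon\neq0$.

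\emph{Step 2: passage to the limit.} In a fixed basis, the structure constants $c_{ij}^k(\varepsilon)$ of $[\cdot,\cdot]_\varepsilon$ tend to those of $[\cdot,\cdot]'$ by \eqref{Lim1}. By hypothesis the structure constants $f^{ij}_k(\varepsilon)$ of $\eta_\varepsilon$ tend to those of $\eta'$. The cocycle condition is a bilinear polynomial identity in $(c,f)$, and dual Jacobi is a quadratic polynomial identity in $f$. Both hold for all $\varepsilon\neq0$, and polynomials are continuous, so they hold in the limit. Skew-symmetry of $\eta'$ is likewise preserved. Therefore $(\mathfrak{g}',\eta')$ is a Lie bialgebra.

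\emph{Step 3: the minimal exponent.} Since $\phi_\varepsilon$ is polynomial in $\varepsilon$ and invertible for small $\varepsilon\neq0$, the entries of $\phi_\varepsilon^{-1}$ are rational in $\varepsilon$. So each component $g^{ij}_k(\varepsilon)$ of $(\phi_\varepsilon^{-1}\otimes\phi_\varepsilon^{-1})\circ\eta\circ\phi_\varepsilon$ is a rational function. Each has a Laurent expansion at $0$ with leading order $\varepsilon^{m_{ijk}}$, where $m_{ijk}\in\mathbb{Z}$, or vanishes identically.

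If $\eta=0$ the statement is trivial, so assume $\eta\neq0$. Then some component is nonzero, because $\phi_\varepsilon$ is invertible. Define $n_0 := -\min m_{ijk}$. The limit of $\varepsilon^n g$ exists if and only if $n+m_{ijk}\geq 0$ for all nonvanishing components, that is, if and only if $n\geq n_0$. For $n>n_0$ every exponent is strictly positive, so the limit is zero. For $n=n_0$ the components attaining the minimum give a nonzero limit. This settles the last claim.

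The main obstacle is the continuity argument of Step 2. It is straightforward once everything is written in coordinates. The only subtle point is that the Jacobi identity for $\delta^*$ must be phrased without inverting $\phi_\varepsilon$ at $\varepsilon=0$. Writing it purely as a polynomial condition on the $f^{ij}_k$ avoids this.
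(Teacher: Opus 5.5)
Your proof is correct. The paper gives no argument for this proposition and only quotes it from \cite{Ballesteros:1994at}, so there is no internal proof to compare with. Your Steps 1--2 are the natural, standard argument. For each $\varepsilon\neq0$, the triple $(V,[\cdot,\cdot]_\varepsilon,\varepsilon^n(\phi_\varepsilon^{-1}\otimes\phi_\varepsilon^{-1})\circ\eta\circ\phi_\varepsilon)$ is, via $\phi_\varepsilon$, a Lie bialgebra isomorphic to $(\mathfrak{g},\varepsilon^n\eta)$. The cocycle and co-Jacobi conditions are closed polynomial conditions on the structure constants, so they pass to the limit.

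Writing Step 2 in terms of the convergent structure constants, rather than pushing the limit through $\phi_\varepsilon^{-1}$, is the right way to avoid the degeneration at $\varepsilon=0$. You also correctly read the paper's ``automorphisms'' $\phi_\varepsilon$ as linear automorphisms of $V$; as Lie algebra automorphisms the contraction would be trivial.

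Your Step 3 is more general than what the paper uses. All its contraction maps are diagonal, $\phi_\varepsilon(X_i)=\varepsilon^{k_i}X_i$. Each component of the transported cocommutator is then a monomial $\varepsilon^{k_i-k_j-k_l}f_i^{jl}$, and $n_0=\max\{k_j+k_l-k_i : f_i^{jl}\neq0\}$ can be read off directly. This gives $n_0=0$ in Example~\ref{Ex2} and $n_0=1$ for the cocommutator of Proposition~\ref{Prop}. Your Laurent-expansion argument covers any polynomial family.

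A few points are worth stating explicitly:
\begin{itemize}
\item Polynomiality of $\phi_\varepsilon$ is essential. It forces integer leading orders, so the admissible $n$ form a closed half-line $[n_0,\infty)$. A component behaving like $\varepsilon^{-1}\log\varepsilon$ would instead give an open half-line with no minimum.
\item As you note, $\eta\neq0$ is needed for $n_0$ to exist.
\item For non-integer $n$ the limit must be one-sided, $\varepsilon\to0^+$, consistent with $\varepsilon=c^{-1}$, $c\to+\infty$ in the paper.
\end{itemize}
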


Let us now consider the map $\phi_\varepsilon$ defining the Lie algebra contraction from $\mathfrak{g}$ to $\mathfrak{g}'$. Proposition \ref{LBC} provides a family of ``contracted" Lie bialgebras $(\mathfrak{g}', \eta')$ parametrized by the real number $n \geq n_0$. Note that the cocommutator map $\eta'$ is non-trivial only when $n = n_0$. These definitions follow naturally then:

\begin{definition}
    The Lie bialgebra $(\mathfrak{g}', \eta')$ is said to be a Lie bialgebra contraction (LBC) of $(\mathfrak{g}, \eta)$ if there exists a contraction from $\mathfrak{g}$ to $\mathfrak{g}'$ described by a family of automorphisms $\phi_\varepsilon$ and a number $n$ such that $\eta'$ is given by the limit \eqref{Lim2}. This LBC is denoted as $(\phi_\varepsilon, n)$.
\end{definition}

\begin{definition}
    The minimal value $n_0$ of $n$ will be called the fundamental contraction constant of the Lie bialgebra $(\mathfrak{g}, \eta)$ associated with $\phi_\varepsilon$.
\end{definition}

\begin{definition}
    The LBC $(\phi_\varepsilon, n_0)$ will be called the fundamental LBC of the Lie bialgebra $(\mathfrak{g}, \eta)$ associated with $\phi_\varepsilon$.
\end{definition}

The above discussion applies to any Lie bialgebra, either coboundary or non-coboundary. In the case of coboundary Lie bialgebras, the contraction of the classical $r$-matrix generating the cocommutator map $\delta$ may also be considered. This case will not be described in this section, but the procedure is similar, though only in some cases a non-trivial contraction of the classical $r$-matrix is possible. This is natural since not all Lie bialgebras are coboundary, and the contracted Lie bialgebra may be non-coboundary. The reader is referred to \cite{Ballesteros:1994at} for a complete explanation of this problem, which will not be relevant in this work.

Let us stress that the convergence of the limit \eqref{Lim2} is ensured, in general, due to the introduction of $\varepsilon^n$ (see Proposition \ref{LBC}). When a Lie bialgebra is considered as the first-order deformation of a given quantum algebra (see the particular case presented in the previous section), the factor $\varepsilon^n$ can be interpreted as a kind of ``renormalization'' of the deformation parameter coming from the quantum group structure.

Let us consider, more explicitly, a Lie bialgebra $(\mathfrak{g}, \eta)$ arising as the first order of a given quantum algebra with quantum deformation parameter $z$. A one-parametric family of Lie bialgebras $(\mathfrak{g}, \delta)$ can be constructed from it with cocommutator map $\delta := z \eta$. We can then associate the corresponding renormalization factor $\varepsilon^{n_0}$ of the fundamental LBC with a transformation of the deformation parameter under the contraction defined by \cite{Ballesteros:1994at}
\begin{equation} \label{zC}
    w = \phi_\varepsilon(z) := \varepsilon^{- n_0} z \ ,
\end{equation}
where $w$ is the new parameter for the contracted family of Lie bialgebras. As a consequence, the LBC \eqref{Lim2} can be defined without manifestly invoking the factor $\varepsilon^{n_0}$, namely
\begin{equation} \label{Lim3}
    \begin{aligned}
        \delta' &:= \lim_{\varepsilon \to 0} (\phi_\varepsilon^{-1} \otimes \phi_\varepsilon^{-1}) \circ \delta \circ \phi_\varepsilon \\
        &= \lim_{\varepsilon \to 0} \phi_\varepsilon^{-1}(z) (\phi_\varepsilon^{-1} \otimes \phi_\varepsilon^{-1}) \circ \eta \circ \phi_\varepsilon \\
        &= \lim_{\varepsilon \to 0} w \varepsilon^{n_0} (\phi_\varepsilon^{-1} \otimes \phi_\varepsilon^{-1}) \circ \eta \circ \phi_\varepsilon \\
        &= w \eta' \ .
    \end{aligned}
\end{equation}
It is possible to ``lift'' this result to the contraction theory of the quantum algebra from which the Lie bialgebra $(\mathfrak{g}, \eta)$ was derived \cite{Ballesteros:1994at}, since such contraction is well-defined provided that the quantum deformation parameter $z$ is transformed into $w$ everywhere in the Hopf algebra under the contraction.

We consider the contraction from the (1+1) timelike $\kappa$-Poincaré Lie bialgebra to the (1+1) timelike $\kappa$-Galilei Lie bialgebra as an example in the following. The Lie algebra contraction was performed in Example \ref{Ex1}.

\begin{example} \label{Ex2}
    The (1+1) timelike $\kappa$-Poincaré cocommutator (Eq.~\eqref{Poincaré LC}) is
    \begin{equation}
        \begin{aligned}
        \delta(P_0) &= 0 \ , \\
        \delta(P_1) &= \omega P_1 \wedge P_0 \ , \\
        \delta(K) &= \omega K \wedge P_0 \ .
        \end{aligned}
    \end{equation}
    The Lie algebra contraction map  was introduced in Eq.~\eqref{C}, hence Eq.~\eqref{Lim3} reads
    \begin{equation}
        \begin{aligned}
        \delta'(P_0) &= \lim_{c \to + \infty} 0 \ , \\
        \delta'(P_1) &= \lim_{c \to + \infty} \phi_c^{-1}(\omega) P_1 \wedge P_0 \ , \\
        \delta'(K) &= \lim_{c \to + \infty} \phi_c^{-1}(\omega) K \wedge P_0 \ .
        \end{aligned}
    \end{equation}
    so the following deformation parameter transformation determines the fundamental LBC:
    \begin{equation}
        \omega' = \phi_c(\omega) = \omega \ .
    \end{equation}
    Note that, in this case, $n_0 = 0$. The limit $c \to + \infty$ renders
    \begin{equation}
        \begin{aligned}
        \delta'(P_0) &= 0 \ , \\
        \delta'(P_1) &= \omega' P_1 \wedge P_0 \ , \\
        \delta'(K) &= \omega' K \wedge P_0 \ .
        \end{aligned}
    \end{equation}
    which is the (1+1) timelike $\kappa$-Galilei cocommutator. Notice that the $\kappa$-Galilei and $\kappa$-Poincaré cocommutators are identical.
\end{example}


\subsubsection{Multiparametric Lie bialgebra contractions}

Quantum groups with more than one deformation parameter can also be constructed and the corresponding Lie bialgebras are thus multiparametric ones. In fact, multiparametric Lie bialgebras arise naturally in the classification of the quantum deformations of the universal enveloping algebra $U(\mathfrak{g})$ of a given Lie algebra $\mathfrak{g}$ (see \cite{Ballesteros1999, Ballesteros:1999ew} and references therein). This is the approach that we will follow in the next section to obtain a suitable Lie bialgebra structure for the (1+1) centrally extended Poincaré Lie algebra that will eventually give a quantum Poincaré group with prescribed properties under the contraction to a quantum Galilei group. It is then appropriate to discuss contractions of multiparametric Lie bialgebras at this stage. 

Let us consider a multiparametric Lie bialgebra $(\mathfrak{g}, \delta_{z_1, \dots, z_p})$ with $p$ independent deformation parameters $z_i$, in the sense that every limit $z_i \to 0$ of $\delta_{z_1, \dots, z_p}$ produces a well-defined Lie bialgebra structure. 

Let now $\mathfrak{g}'$ be a Lie algebra that can be obtained by contraction of the Lie algebra $\mathfrak{g}$, with contraction map $\phi_\varepsilon$. We can then characterize the fundamental LBC corresponding to each parameter $z_i$, which must be considered once the remaining $z_j$ are set to zero. Note that, in general, each of these `elementary' fundamental LBC $(\phi_\varepsilon, n_{0, i})$, gives rise to a different Lie bialgebra structure on $\mathfrak{g}'$, which depends only on one parameter given by $w_i = \varepsilon^{- n_{0, i}} z_i$. 

We end up with a set of $p$ elementary fundamental LBCs $(\phi_\varepsilon, n_{0, i})$ characterized by the set of $p$ fundamental constants $\{n_{0, 1}, \dots, n_{0, p}\}$. Notice that, if we consider an elementary LBC $(\phi_\varepsilon, n_i)$ such that $n_i > n_{0, i}$, then this elementary LBC will not be fundamental and the contracted Lie bialgebra will be the trivial one with respect to $z_i$. An arbitrary well-defined contraction of the multiparametric Lie bialgebra $(\mathfrak{g}, \delta_{z_1, \dots, z_p})$ is therefore defined by the composition of $p$ elementary LBCs $(\phi_\varepsilon, n_i)$ where the set of transformed parameters is
\begin{equation}
    \{ w_1, \dots, w_p\} = \{\varepsilon^{- n_{1}} z_1, \dots, \varepsilon^{- n_{p}} z_p\} \ ,
\end{equation}
with $n_i \geq n_{0, i}$ for $i = 1, \dots, p$. We can therefore decide, for each of the $p$ elementary LBCs $(\phi_\varepsilon, n_i)$, whether it is fundamental or not. If it is not, the terms in the cocommutator associated with this parameter vanish in the contracted Lie bialgebra. We finally define the fundamental multiparametric LBC:

\begin{definition}
    The fundamental multiparametric LBC is defined by the composition of the $p$ elementary fundamental LBCs $(\phi_\varepsilon, n_{0, i})$.
\end{definition}

As a consequence, the fundamental multiparametric LBC will lead to a unique multiparametric Lie bialgebra $(\mathfrak{g}', \delta_{w_1, \dots, w_p})$ with $p$ deformation parameters $w_i$. This is the most general contracted multiparametric Lie bialgebra obtained under the contraction map $\phi_\varepsilon$.    
    
Alternatively, if a certain number, say $k$, of the constants $n_i$ are selected such that, for them, $n_i > n_{0, i}$, then the multiparametric LBC will be not called fundamental, and the contracted Lie bialgebra will be a multiparametric Lie bialgebra $(\mathfrak{g}', \delta_{w_1, \dots, w_{(p - k)}})$ with $(p - k)$ deformation parameters.

A particular case within the class of non-fundamental multiparametric LBC would be the one such that all the constants $n_i$ are equal, namely $n_i = n$ for $i = 1, \dots, p$. In this case, both the convergence of the multiparametric LBC and the fact that the contracted Lie bialgebra is non-trivial are ensured by choosing the constant $n$ equal to be the maximum $m_0$ of the set of $p$ fundamental constants $\{n_{0, 1}, \dots, n_{0, p}\}$. As a consequence, the cocommutator in the contracted Lie bialgebra will only retain the terms coming from the deformation parameters whose fundamental constant is $n = m_0$ (if $n > m_0$ then the contracted Lie bialgebra will be the trivial one). This non-fundamental multiparametric LBC could be properly referred to as the homogeneous multiparametric LBC.


\subsection{Quantum algebra and quantum group contractions}

We recall that the condition of the deformation parameter transformation previously considered was discovered when quantum deformations of non-semisimple Lie algebras were obtained through a contraction procedure for the first time in \cite{Celeghini:1990bf, Celeghini:1990xx, Celeghini1992}. It was afterwards demonstrated in \cite{Ballesteros:1994at} that finding the fundamental LBC of the associated Lie bialgebra gives a unique prescription for the definition of the transformation of the deformation parameter that turned out to be sufficient to guarantee a non-trivial contraction of the quantum algebra.

Let us consider a quantum algebra $(U_z(\mathfrak{g}), \Delta_z)$ whose associated uniparametric family of Lie bialgebra structures obtained from the first order of the coproduct map in the deformation parameter is $(\mathfrak{g}, \delta_z)$. Let $\phi_\varepsilon$ be the contraction map that defines the Lie algebra contraction from $\mathfrak{g}$ to $\mathfrak{g}'$ with the same dimension, then there exists the fundamental LBC $(\phi_\varepsilon, n_0)$ from $(\mathfrak{g}, \delta_z)$ to $(\mathfrak{g}', \delta_w)$. One can show \cite{Ballesteros:1994at} that the contracted commutation relations and coproduct in the contracted quantum algebra $(U_w(\mathfrak{g}'), \Delta_w)$ are given by
\begin{equation}
    [X, Y]_w := \lim_{\varepsilon \to 0} \phi_\varepsilon^{-1} [\phi_\varepsilon(X), \phi_\varepsilon(Y)]_z \ ,
\end{equation}
and
\begin{equation}
    \Delta_w := \lim_{\varepsilon \to 0} (\phi_\varepsilon^{-1} \otimes \phi_\varepsilon^{-1}) \circ \Delta_z \circ \phi_\varepsilon \ ,
\end{equation}
provided that, under contraction, the deformation parameter transforms as in Eq.~\eqref{zC}. The reader is referred to \cite{Ballesteros:1994at} for more details.

The contraction procedure for quantum groups is based on the fact that quantum groups are Hopf algebra duals of quantum algebras, and by taking into account that a contraction map for the local coordinate functions on a Lie group can be defined in terms of the contraction map for its Lie algebra.

Let us now consider, specifically, a finite-dimensional real Lie algebra $\mathfrak{g} = \text{Lie}(G)$ with basis elements $X^i \in \mathfrak{g}$. The coordinate functions $x_i \in \mathcal{O}(G)$ on the Lie group $G$ are defined by duality, namely
\begin{equation} \label{DualityLG}
    \langle x_i, X^j \rangle = \delta_i^j \ ,
\end{equation}
which is a particular case of the complete Hopf algebra duality \eqref{Duality}. Let $\phi_\varepsilon(X^i)$ be a contraction map defined on the Lie algebra generators $X^i \in \mathfrak{g}$. We can write
\begin{equation}
    \phi_\varepsilon(X^i) = (\phi_\varepsilon)_j^i X^j \ ,
\end{equation}
with $(\phi_\varepsilon)_i^j$ the matrix elements of $\phi_\varepsilon$. Let now $\varphi_\varepsilon(x_i)$ be a contraction map defined on the Lie group coordinates $x_i \in \mathcal{O}(G)$. Similarly,
\begin{equation}
    \varphi_\varepsilon(x_i) = (\varphi_\varepsilon)_i^j x_j \ .
\end{equation}
If we want the contraction maps $\phi_\varepsilon$ and $\varphi_\varepsilon$ to preserve duality, \textit{i.e.}~$\langle \varphi_\varepsilon(x_i),  \phi_\varepsilon(X^j) \rangle = \delta_i^j,$ we end up with the following requisite:
\begin{equation} \label{dLBC}
    (\varphi_\varepsilon)_i^k (\phi_\varepsilon)_k^j = \delta_i^j \ ,
\end{equation}
which implies that the matrix defining the Lie group contraction is the inverse of the one defining the Lie algebra contraction.

As already commented, when considering quantum algebras and quantum groups, the cocommutative Hopf algebra generated by $X^i \in U(\mathfrak{g})$ is deformed into a non-cocommutative Hopf algebra with the same generators $X^i \in U_z(\mathfrak{g})$ and the commutative Hopf algebra generated by $x_i \in \mathcal{O}(G)$ is deformed into a noncommutative Hopf algebra with the same generators $x_i \in \mathcal{O}_z(G)$, respectively, in such a way that the duality \eqref{DualityLG} is preserved as embedded into the full Hopf algebra duality \eqref{Duality} and, with it, the contraction maps $\phi_\varepsilon(X^i)$ and $\varphi_\varepsilon(x_i)$ remain undeformed. On the other hand, the deformation parameter transformation under contraction will be uniquely defined by the fundamental LBC used for the contraction of the quantum algebra.

As a consequence, once the LBC is defined, and with it the contraction of the corresponding quantum algebra, the contraction of the dual Hopf algebra (quantum group) is defined entirely by Eq.~\eqref{dLBC} together with the transformation of the deformation parameter $\phi_\varepsilon(z) = \varphi_\varepsilon(z)$ given by Eq.~\eqref{zC}, which works simultaneously in the quantum algebra and quantum group structures. The above discussion also provide the basics for the contraction theory of Poisson--Lie groups, which are the semiclassical counterparts of quantum groups, whose first examples are given in \cite{Ballesteros:1994iv}.


\subsection{Universal \texorpdfstring{$T$}{T}-matrix contractions}

We can now proceed to introduce the contraction theory of the corresponding Hopf algebra dual forms, which collect the quantum algebra and the quantum group in a single object that algebraically generalizes the exponential map for Lie groups. 

We recall that the universal $T$-matrix was defined in Eq.~\eqref{T0}, where the $X^\mu \in U_z(\mathfrak{g})$ are basis elements of the quantum algebra, \textit{i.e.}~the Hopf algebra deformation of the universal enveloping algebra of the Lie algebra $\mathfrak{g}$ of a Lie group $G$, and the $x_\mu \in \mathcal{O}_z(G)$ are basis elements of the quantum group, \textit{i.e.}~the Hopf algebra deformation of the algebra of regular functions on the Lie group $G$. If the Lie algebra $\mathfrak{g}$ is $n$-dimensional, with basis $\{X^1, X^2, \dots, X^n\} \in \mathfrak{g}$, then $X^\mu := X^{\mu_1 \mu_2 \dots \mu_n} = (X^1)^{\mu_1} \, (X^2)^{\mu_2} \, \dots \, (X^n)^{\mu_n} \in U_z(\mathfrak{g)}$ and $x_\mu := x_{\mu_1 \mu_2 \dots \mu_n} \in \mathcal{O}_z(G)$ (with $\mu_1, \mu_2, \dots , \mu_n$ nonnegative integers) is defined by the duality relation \eqref{Duality}. We have then the following definition for the contraction of the corresponding Hopf algebra dual form:

\begin{definition}
    The universal $T$-matrix $T' \in \mathcal{O}_z(G') \otimes U_z(\mathfrak{g'})$ is a contraction of $T \in \mathcal{O}_z(G) \otimes U_z(\mathfrak{g})$ if there exists the limit
    \begin{equation} \label{Lim4}
        T' := \lim_{\varepsilon \to 0} (\varphi_\varepsilon \otimes \phi_\varepsilon) T = \lim_{\varepsilon \to 0} \varphi_\varepsilon(x_\mu) \otimes \phi_\varepsilon(X^\mu) \ ,
    \end{equation}
where $\phi_\varepsilon$ is the Lie algebra contraction map from $\mathfrak{g}$ to $\mathfrak{g'}$ extended to $U_z(\mathfrak{g})$, $\varphi_\varepsilon$ is the contraction map determined by the preservation of the Hopf algebra duality \eqref{Duality}, and the deformation parameter transformation is given by $\phi_\varepsilon(z) = \varphi_\varepsilon(z)$. 
\end{definition}

Note that the contraction maps $\phi_\varepsilon$ and $\varphi_\varepsilon$ must satisfy $\langle \varphi_\varepsilon(x_\mu), \phi_\varepsilon(X^\nu) \rangle = \delta_{\mu_1}^{\nu_1} \, \delta_{\mu_2}^{\nu_2} \, \dots \delta_{\mu_n}^{\nu_n}$. This is automatically satisfied provided that the relation \eqref{dLBC} is verified.

As is well-known, obtaining the universal $T$-matrix for a given quantum group is non-trivial. In all the cases presented in this work, the dual basis elements $x_\mu \in \mathcal{O}_z(G)$ will be of the form
\begin{equation} \label{dBasis}
    x_\mu = \frac{x_1^{\mu_1}}{\mu_1!} \, \frac{x_2^{\mu_2}}{\mu_2!} \, \dots \, \frac{x_n^{\mu_n}}{\mu_n!} \ ,
\end{equation}
which will give rise to a Hopf algebra dual form consisting in the product of usual exponentials,
\begin{equation}
    T = e^{x_1 \otimes X_1} \, e^{x_2 \otimes X_2} \, \dots \, e^{x_n \otimes X_n} \ .
\end{equation}
As a consequence, the contraction map $\varphi$ acting on the basis elements of $\mathcal{O}_z(G)$ and being compatible with the Hopf algebra duality is of the form
\begin{equation} \label{dBasisC}
    \varphi_\varepsilon(x_\mu) = \frac{\varphi_\varepsilon(x_1)^{\mu_1}}{\mu_1!} \, \frac{\varphi_\varepsilon(x_2)^{\mu_2}}{\mu_2!} \, \dots \, \frac{\varphi_\varepsilon(x_n)^{\mu_n}}{\mu_n!} \ ,
\end{equation}
so the contracted $T$-matrix will be given by
\begin{equation}
    T' = \lim_{\varepsilon \to 0} e^{\varphi_\varepsilon(x_1) \otimes \phi_\varepsilon(X_1)} \, e^{\varphi_\varepsilon(x_2) \otimes \phi_\varepsilon(X_2)} \, \dots \, e^{\varphi_\varepsilon(x_n) \otimes \phi_\varepsilon(X_n)} \ .
\end{equation}
It is assumed that $\phi_\varepsilon(z) = \varphi_\varepsilon(z)$ is defined by the corresponding LBC to ensure the convergence of both contractions, the quantum algebra contraction and the quantum group contraction, simultaneously.

We remark that, in generic quantum groups, the only possibility beyond Eq.~\eqref{dBasis} is that some of the factorials turn into $q$-factorials $[n]_q!$ with $q = e^{f(z)}$, so that in the Hopf algebra dual form some of the exponentials will turn into $q$-exponentials (see Eq.~\eqref{Tqexp}). In this case, the contraction map \eqref{dBasisC} includes terms of the type $\varphi_\varepsilon([n]_q)!$, with $\varphi_\varepsilon([n]_q)$ given by
\begin{equation}
    \varphi_\varepsilon([n]_q) = [n]_{\varphi_\varepsilon(q)} \ ,
\end{equation}
and $\varphi_\varepsilon(q) = e^{\varphi_\varepsilon(f(z))}$, with $\phi_\varepsilon(z) = \varphi_\varepsilon(z)$ given by Eq.~\eqref{zC}. As a consequence, depending on the form of $\phi_\varepsilon(z)$, the contraction procedure will produce a contracted universal $T$-matrix that either retains $q$-exponentials or transforms $q$-exponentials into ordinary exponentials.


\section{The quantum (1+1) centrally extended Poincaré \texorpdfstring{$T$}{T}-matrix} \label{Sec4}

The aim of this section is to derive a new quantum deformation of the (1+1) centrally extended Poincaré Lie algebra and obtain its Hopf algebra dual form. We will identify in this way the relativistic analogue of the quantum centrally extended Galilei algebra whose universal $T$-matrix was shown to describe QRF transformations in \cite{Ballesteros:2025ypr}.


\subsection{The quantum centrally extended Galilei algebra}

It is well-known that the classification of the quantum deformations of a given Lie algebra reduces to the classification of its compatible Lie bialgebra structures. Additionally, Lie bialgebras are in one-to-one correspondence with (connected and simply connected) Poisson--Lie groups \cite{Drinfeld:1983ky}, and both can be considered the ``semiclassical limit'' of quantum algebras and quantum groups, respectively, that are Hopf algebra duals. This is the basis of the quantum duality principle summarized in universal $T$-matrices. 

The classification of Poisson structures on the (1+1) Galilei Lie group with non-trivial central extension was performed in \cite{Opanowicz:1998zz} (see also \cite{Opanowicz:1999qp, Ballesteros:1999ew}). Among the 26 non-isomorphic classes, only a particular instance ultimately gives rise to the universal $T$-matrix of non-relativistic QRF transformations \cite{Ballesteros:2025ypr}.

The quantum (1+1) centrally extended Galilei algebra arising as the Hopf algebra dual to the Hopf algebra obtained by quantization of the selected Poisson--Lie Galilei group (as a Poisson--Hopf algebra) is characterized by the following deformed commutation relations and coproducts:\footnote{We recall that this quantum algebra, in a different basis, was first introduced in \cite{Bonechi:1992cb, Bonechi:1992ye} as the symmetry algebra underlying magnon excitations in (1+1)-dimensional Heisenberg spin chains, with the quantum deformation parameter $\alpha$ identified with the lattice spacing.}
\begin{equation} \label{EGalilei QA(a)}
    [P_0, P_1] = 0 \ , \quad [K, M] = - \frac{\alpha}{2} M^2 e^{\alpha P_1} \ , \quad [K, P_0] = \frac{e^{\alpha P_1} - 1}{\alpha} \ , \quad [K, P_1] = M e^{\alpha P_1} \ ,
\end{equation}
and
\begin{equation} \label{EGalilei QA(b)}
   \begin{aligned}
        \Delta(M) &= M \otimes 1 + e^{- \alpha P_1} \otimes M \ , \\
        \Delta(P_0) &= P_0 \otimes 1 + 1 \otimes P_0 \ , \\
        \Delta(P_1) &= P_1 \otimes 1 + 1 \otimes P_1 \ , \\
        \Delta(K) &= K \otimes e^{\alpha P_1} + 1 \otimes K \ ,
    \end{aligned}
\end{equation}
respectively, with the rest of commutators vanishing. The associated Lie bialgebra is consequently defined by the (1+1) centrally extended Galilei Lie algebra, with Lie bracket read off from the zeroth order of Eq.~\eqref{EGalilei QA(a)}, namely
\begin{equation} \label{EGalilei LA}
    [P_0, P_1] = 0 \ , \quad [K, P_0] = P_1 \ , \quad [K, P_1] = M \ , \quad [M, \cdot] = 0 \ ,
\end{equation}
together with the cocommutator map extracted from the (skew-symmetric part of the) first order of Eq.~\eqref{EGalilei QA(b)}, namely
\begin{equation} \label{EGalilei LC}
    \begin{aligned}
        \delta(M) &= \alpha M \wedge P_1 \ , \\
        \delta(P_0) &= 0 \ , \\
        \delta(P_1) &= 0 \ , \\
        \delta(K) &= \alpha K \wedge P_1 \ .
    \end{aligned}
\end{equation}
One can check that this cocommutator is not coming from an $r$-matrix, thus corresponding to a non-coboundary Lie bialgebra. One can see this simply by noting that, in this case, coboundary Lie bialgebras must be such that $\delta(M) = 0$.


\subsection{The centrally extended Poincaré Lie bialgebra}

We begin by highlighting that, to the best of our knowledge, the classification of the Lie bialgebra structures for the (1+1) centrally extended Poincaré Lie algebra has not been completely performed yet. We are interested in answering the following question: Can the Galilei Lie bialgebra defined by Eqs.~\eqref{EGalilei LA} and \eqref{EGalilei LC} be obtained by contraction of a Poincaré Lie bialgebra? The answer is positive, as shown in Proposition \ref{Prop}. What is more, as far as fundamental multiparametric LBC are concerned, uniqueness (up to isomorphism) is demonstrated. Let us mention that the fact that this Lie bialgebra exists, and even more, that it is unique, is not guaranteed. Afterwards, this Lie bialgebra will be quantized to a Hopf algebra deformation, namely the quantum algebra. We will finally construct the corresponding Hopf algebra dual form.

The (1+1) Poincaré Lie algebra with trivial central extension (or pseudo-extension \cite{deAzcarraga:1995uw}) $M$ interpreted as the (rest) mass generator is
\begin{equation} \label{EPoincaré LA}
    [P_0, P_1] = 0 \ , \quad [K, P_0] = P_1 \ , \quad [K, P_1] = M + P_0 \ , \quad [M, \cdot] = 0 \ .
\end{equation}
Note that this Lie algebra is isomorphic to the direct sum of the non-centrally extended one with the Abelian Lie subalgebra generated by the central element.

Let us consider the Lie algebra contraction. Analogously to Examples \ref{Ex1} and \ref{Ex2}, the following contraction map $\phi_c$ is introduced now \cite{Aldaya:1985plo}:
\begin{equation} \label{C1}
    \phi_c(M) = c^{-2} M \ , \quad \phi_c(P_0) = P_0 \ , \quad \phi_c(P_1) = c^{-1} P_1 \ , \quad \phi_c(K) = c^{-1} K \ .
\end{equation}
In this case, Eq.~\eqref{Lim1} reads
\begin{equation}
    [P_0, P_1]' = \lim_{c \to + \infty} 0 \ , \quad [K, P_0]' = \lim_{c \to + \infty} P_1 \ , \quad [K, P_1]' = \lim_{c \to + \infty}(M + c^{-2} P_0) \ , \quad [M, \cdot]' = \lim_{c \to + \infty} 0 \ ,
\end{equation}
and taking the limit $c \to + \infty$,
\begin{equation}
    [P_0, P_1]' = 0 \ , \quad [K, P_0]' = P_1 \ , \quad [K, P_1]' = M \ , \quad [M, \cdot]' = 0 \ ,
\end{equation}
which is the (1+1) centrally extended Galilei Lie algebra \eqref{EGalilei LA}. Note that this Lie algebra is not isomorphic to the direct sum of the non-centrally extended one with Abelian Lie subalgebra generated by the central element.

\begin{proposition} \label{Prop}
    The unique Lie bialgebra structure for the (1+1) centrally extended Poincaré Lie algebra \eqref{EPoincaré LA} such that its fundamental Lie bialgebra contraction gives Eq.~\eqref{EGalilei LC} is
    \begin{equation} \label{EPoincaré LC}
        \begin{aligned}
            \delta(M) &= \alpha M \wedge P_1 + \alpha P_0 \wedge P_1 \ , \\
            \delta(P_0) &= 0 \ , \\
            \delta(P_1) &= 0 \ , \\
            \delta(K) &= \alpha K \wedge P_1 \ .
        \end{aligned}
    \end{equation}
\end{proposition}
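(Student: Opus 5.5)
Two things have to be shown: that \eqref{EPoincaré LC} defines a Lie bialgebra on \eqref{EPoincaré LA} whose fundamental contraction under \eqref{C1} is \eqref{EGalilei LC}, and that it is the only one with this property up to isomorphism. For uniqueness I would start from the most general cocommutator on the four-dimensional Lie algebra \eqref{EPoincaré LA}. In the basis $\{M, P_0, P_1, K\}$ of $\mathfrak{g}\wedge\mathfrak{g}$, which is six-dimensional, this is a linear map $\delta$ with $4\times 6=24$ unknown coefficients. I would then impose the two conditions of the Lie bialgebra definition.

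The first condition is the 1-cocycle condition \eqref{1-cocycle}. Its linear constraints are imposed on the brackets $[K,P_0]=P_1$, $[K,P_1]=M+P_0$, $[P_0,P_1]=0$, and on the centrality of $M$, i.e.\ $\delta([M,X])=0$ forcing $\mathrm{ad}_X\delta(M)=\mathrm{ad}_M\delta(X)=0$ for all $X$. Solving this linear system gives the general cocycle. It is the span of coboundaries $\mathrm{ad}\,r$, with $r\in\mathfrak{g}\wedge\mathfrak{g}$, plus non-coboundary pieces; the latter are at least those in which $\delta(M)\neq 0$, since $\delta(M)=\mathrm{ad}_M r=0$ for every coboundary. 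The second condition is the Jacobi identity for $\delta^*$. This gives quadratic constraints on the surviving parameters, which I would handle afterwards by case distinctions.

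Next I would apply the contraction \eqref{C1} with the multiparametric machinery of Section~\ref{Sec3}. For each surviving parameter $z_i$ I would compute the scaling of $(\phi_c^{-1}\otimes\phi_c^{-1})\circ\delta\circ\phi_c$ in powers of $c$, extract its fundamental constant $n_{0,i}$, and identify the contracted term. Using $\phi_c^{-1}(M)=c^2M$, $\phi_c^{-1}(P_1)=c P_1$ and $\phi_c^{-1}(K)=cK$, the term $\alpha M\wedge P_1$ in $\delta(M)$ scales as $c^{-2}\cdot c^{3}=c$. The term $\alpha K\wedge P_1$ in $\delta(K)$ also scales as $c^{-1}\cdot c^{2}=c$. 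Both therefore have $n_{0}=1$, consistent with $\alpha'=c\,\alpha$. The extra term $\alpha P_0\wedge P_1$ in $\delta(M)$ scales as $c^{-2}\cdot c=c^{-1}$ and vanishes in the limit.

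The fundamental multiparametric contraction must reproduce exactly \eqref{EGalilei LC}. So every parameter whose fundamental contraction yields a term absent from \eqref{EGalilei LC} must vanish, and the coefficients of $M\wedge P_1$ in $\delta(M)$ and $K\wedge P_1$ in $\delta(K)$ must coincide. Any terms left free are those whose contraction is trivial or that can be removed by a Lie algebra automorphism of \eqref{EPoincaré LA} commuting appropriately with $\phi_c$. This is where uniqueness up to isomorphism enters.

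Finally, the $P_0\wedge P_1$ term in $\delta(M)$ is forced, not optional. The cocycle condition for $\delta([K,P_1])=\delta(M)+\delta(P_0)$ with $\delta(K)=\alpha K\wedge P_1$ gives $\mathrm{ad}_K(\alpha K\wedge P_1)=\alpha K\wedge(M+P_0)$. Combined with $\delta(P_1)=0$, this requires $\delta(M)+\delta(P_0)=\alpha K\wedge(M+P_0)$ up to the $\mathrm{ad}_{P_1}\delta(K)$ contribution, which I would compute explicitly. I would then check directly that \eqref{EPoincaré LC} satisfies both the cocycle condition and Jacobi for $\delta^*$. The latter amounts to computing the dual bracket and verifying that it defines a solvable Lie algebra, which I expect to hold.

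The main obstacle is the bookkeeping of the general cocycle together with the quadratic Jacobi constraints. The cleanest route is probably to exploit the decomposition $\mathfrak{g}\cong\mathfrak{p}(1+1)\oplus\langle M'\rangle$ with $M'=M+P_0$. One then treats $\delta(M')$, which is central-valued and must be $\mathrm{ad}$-invariant, separately from the known classification of $\mathfrak{p}(1+1)$ bialgebras.
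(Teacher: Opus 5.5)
Your outline (general cocycle, quadratic co-Jacobi constraints, parameter-by-parameter fundamental contraction under \eqref{C1}, matching with \eqref{EGalilei LC}) is the route the paper takes. However, two of the concrete steps you write down are wrong.

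First, the cocycle computation for $[K,P_1]=M+P_0$. Condition \eqref{1-cocycle} reads $\delta(M)+\delta(P_0)=\mathrm{ad}_K\delta(P_1)-\mathrm{ad}_{P_1}\delta(K)$. With $\delta(P_1)=0$ the first term vanishes, and $\mathrm{ad}_K\delta(K)$ never enters. The whole contribution is $-\mathrm{ad}_{P_1}(\alpha K\wedge P_1)=\alpha(M+P_0)\wedge P_1$, so there is no $K\wedge(M+P_0)$ term. In fact the $P_0\wedge P_1$ piece needs no such argument. Since $M$ is central, $\delta(M)$ must be $\mathrm{ad}$-invariant, and a short computation shows that the invariants of $\mathfrak g\wedge\mathfrak g$ are spanned by $(M+P_0)\wedge P_1$. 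Hence $\delta(M)=\alpha(M\wedge P_1+P_0\wedge P_1)$ for every cocycle, with some scalar $\alpha$.

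Second, your suggested splitting is misidentified. Since $[K,M+P_0]=P_1\neq 0$, the element $M+P_0$ is not central. By \eqref{BC} the central summand is $\langle M\rangle$, and $M+P_0$ is the time translation of the Poincar\'e summand. Even with the correct splitting you cannot treat the pieces separately and import the (1+1) Poincar\'e classification. The cocommutator of the Poincar\'e generators may carry $M$-legs, i.e.\ components $f(X)\wedge M$ with $f$ a derivation, and the co-Jacobi constraints couple these to $\delta(M)$. For example, $\alpha\beta_1=0$ in \eqref{c} ties $\alpha$ to the $M\wedge P_0$ component of $\delta(P_0)$.

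The step that actually closes uniqueness is also misdescribed, and it is precisely the part you defer to ``case distinctions''. Under the fundamental multiparametric contraction each parameter is rescaled by its own fundamental constant. By construction, therefore, no nonzero parameter contracts to zero: there are no free terms ``whose contraction is trivial'', and no automorphism argument is needed.

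What is needed is the explicit solution, which is the substance of the proof:
\begin{itemize}
\item The cocycle space is 9-dimensional; in the paper's parametrization the parameters are $\alpha,\beta_1,\dots,\beta_8$.
\item For $\alpha\neq 0$, the co-Jacobi constraints \eqref{c} force $\beta_1=\beta_3=0$ and leave two branches: $\beta_4=-\alpha,\ \beta_5=0$, or $\beta_4=\beta_2$.
\item The first branch must be discarded. There the coefficient $-(\alpha+\beta_4)$ of $P_1\wedge K$ in $\delta(K)$ vanishes, so the Galilei term $\alpha' K\wedge P_1$ can never be produced.
\item In the second branch, $\beta_2,\beta_5,\beta_6,\beta_7,\beta_8$ rescale as $c^3,c^2,c,c^2,c^0$. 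Each yields a term absent from \eqref{EGalilei LC}, e.g.\ $\beta_2' M\wedge P_1$ in $\delta'(P_0)$ or $\beta_8' P_0\wedge P_1$ in $\delta'(K)$, and hence must vanish.
\end{itemize}
Without this analysis, and in particular without identifying and excluding the branch where the $P_1\wedge K$ term drops out, the proposal does not yet establish uniqueness.
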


\begin{proof}
    We begin by considering a generic element on $\mathfrak{g} \wedge \mathfrak{g}$,
    \begin{equation}
        \delta(X^i) = f^i_{j k} X^j \wedge X^k \ ,
    \end{equation}
    with $\mathfrak{g}$ defined by Eq.~\eqref{EPoincaré LA}. If we apply the 1-cocycle condition \eqref{1-cocycle}, we obtain the following (pre-)cocommutator map:
    \begin{equation} \label{EPoincaré LC(p)}
        \begin{aligned}
            \delta(M) &= \alpha M \wedge P_1 + \alpha P_0 \wedge P_1 \ , \\
            \delta(P_0) &= \beta_1 M \wedge P_0 + \beta_2 M \wedge P_1 + \beta_3 M \wedge K + \beta_4 P_0 \wedge P_1 + \beta_3 P_0 \wedge K \ , \\
            \delta(P_1) &= (\beta_2 - \beta_4) M \wedge P_0 + \beta_5 M \wedge P_1 - (\beta_1 - \beta_5) P_0 \wedge P_1 + \beta_3 P_1 \wedge K \ , \\
            \delta(K) &= \beta_6 M \wedge P_0 + \beta_7 M \wedge P_1 - (\beta_1 - \beta_5) M \wedge K + \beta_8 P_0 \wedge P_1 - (\beta_1 - \beta_5) P_0 \wedge K \\
            & \quad - (\alpha + \beta_4) P_1 \wedge K \ ,
        \end{aligned}
    \end{equation}
    in terms of 9 parameters. Afterwards, the condition that $(\mathfrak{g^*}, \delta^*)$ defines a Lie algebra provides the following 6 quadratic constraints for these parameters:
    \begin{equation} \label{c}
        \begin{gathered}
            \alpha \beta_1 = 0 \ , \quad \alpha \beta_3 = 0 \ , \quad \beta_1 \beta_8 = 0 \ , \quad \beta_3 \beta_8 = 0 \ , \\
            \beta_1 \beta_2 - (\beta_2 - \beta_4) \beta_5 + \beta_3 \beta_7 = 0 \ , \\
            (\alpha + \beta_4) (\beta_2 - \beta_4) + \beta_1 (\beta_1 - \beta_5) + \beta_3 \beta_6 = 0 \ .
        \end{gathered}
    \end{equation}
    We aim at solutions such that their contraction produces Eq.~\eqref{EGalilei LC}, therefore solutions such that $\alpha \neq 0$. Accordingly, Eq.~\eqref{c} reduces to
    \begin{equation}
        \alpha \neq 0 \ , \quad \beta_1 = 0 \ , \quad \beta_3 = 0 \ , \quad (\beta_2 - \beta_4) \beta_5 = 0 \ , \quad (\alpha + \beta_4) (\beta_2 - \beta_4) = 0 \ .
    \end{equation}
    The solutions are
    \begin{itemize}
        \item[i)] $\beta_4 = - \alpha$, $\beta_5 = 0$,
        \item[ii)] $\beta_4 = \beta_2$.
    \end{itemize}
    It is clear that obtaining Eq.~\eqref{EGalilei LC} from Eq.~\eqref{EPoincaré LC(p)} is not possible with solution i), since the term $P_1 \wedge K$ vanishes. As a consequence, the only possible solution is ii). The associated cocommutator reads
    \begin{equation} \label{EPoincaré LC(p)2}
        \begin{aligned}
            \delta(M) &= \alpha (M \wedge P_1 + P_0 \wedge P_1) \ , \\
            \delta(P_0) &= \beta_2 (M \wedge P_1 + P_0 \wedge P_1) \ , \\
            \delta(P_1) &= \beta_5 (M \wedge P_1 + P_0 \wedge P_1) \ , \\
            \delta(K) &= \beta_6 M \wedge P_0 + \beta_7 M \wedge P_1 + \beta_8 P_0 \wedge P_1 + \beta_5 (M \wedge K + P_0 \wedge K) - (\alpha + \beta_2) P_1 \wedge K \ .
        \end{aligned}
    \end{equation}
    If we apply the contraction map \eqref{C1}, we have that Eq.~\eqref{Lim3} reads 
    \begin{equation}
        \begin{aligned}
            \delta'(M) &= \lim_{c \to + \infty} \alpha c (M \wedge P_1 + c^{-2} P_0 \wedge P_1) \ , \\
            \delta'(P_0) &= \lim_{c \to + \infty} \beta_2 c^3 (M \wedge P_1 + c^{-2} P_0 \wedge P_1) \ , \\
            \delta'(P_1) &= \lim_{c \to + \infty} \beta_5 c^2 (M \wedge P_1 + c^{-2} P_0 \wedge P_1) \ , \\
            \delta'(K) &= \lim_{c \to + \infty} (\beta_6 c M \wedge P_0 + \beta_7 c^2 M \wedge P_1 + \beta_8 P_0 \wedge P_1 + \beta_5 c^2 (M \wedge K + c^{-2} P_0 \wedge K) \\
            & \qquad \qquad - (\alpha + \beta_2) c P_1 \wedge K) \ .
        \end{aligned}
    \end{equation}
    We see that to have a well-behaved cocommutator under the contraction limit $c \to + \infty$, the deformation parameters $\alpha$, $\beta_2$, $\beta_5$, $\beta_6$, $\beta_7$, $\beta_8$ must transform as  $\alpha' = \phi_c(\alpha) = \alpha c$, $\beta_2' = \phi_c(\beta_2) = \beta_2 c^3$, $\beta_5' = \phi_c(\beta_5) = \beta_5 c^2$, $\beta_6' = \phi_c(\beta_6) = \beta_6 c$, $\beta_7' = \phi_c(\beta_7) = \beta_7 c^2$, $\beta_8' = \phi_c(\beta_8) = \beta_8$, respectively (see Eq.~\eqref{zC}). This defines the fundamental multiparametric Lie bialgebra contraction. The result is
    \begin{equation}
        \begin{aligned}
            \delta'(M) &= \alpha' M \wedge P_1 \ , \\
            \delta'(P_0) &= \beta_2' M \wedge P_1  \ , \\
            \delta'(P_1) &= \beta_5' M \wedge P_1  \ , \\
            \delta'(K) &= \beta_6' M \wedge P_0 + \beta_7' M \wedge P_1 + \beta_8' P_0 \wedge P_1 + \beta_5' M \wedge K - \alpha' P_1 \wedge K \ .
        \end{aligned}
    \end{equation}
    This cocommutator coincides with Eq.~\eqref{EGalilei LC} if
    \begin{equation}
        \beta_2' = \beta_5' = \beta_6' = \beta_7' = \beta_8' = 0 \ ,
    \end{equation}
    then, under the condition of fundamental multiparametric Lie bialgebra contraction, 
    \begin{equation}
        \beta_2 = \beta_5 = \beta_6 = \beta_7 = \beta_8 = 0 \ .
    \end{equation}
    therefore, recalling Eq.~\eqref{EPoincaré LC(p)2},
    \begin{equation}
        \begin{aligned}
            \delta(M) &= \alpha M \wedge P_1 + \alpha P_0 \wedge P_1 \ , \\
            \delta(P_0) &= 0 \ , \\
            \delta(P_1) &= 0 \ , \\
            \delta(K) &= \alpha K \wedge P_1 \ ,
        \end{aligned}
    \end{equation}
    thus Eq.~\eqref{EPoincaré LC} is the unique Lie bialgebra structure for the (1+1) centrally extended Poincaré Lie algebra \eqref{EPoincaré LA} (up to isomorphism) whose fundamental Lie bialgebra contraction recovers Eq.~\eqref{EGalilei LC}. This cocommutator will again define a non-coboundary Lie bialgebra.
\end{proof}

We will consider Hopf algebra dual forms for quantum groups, motivated by their role as QRF transformations in this work. Additionally, such universal $T$-matrices are natural objects to act on noncommutative spaces and describe their symmetries. As already mentioned, a quantum group can be considered as the quantization of a Poisson--Lie group. We recall that a Poisson--Lie group $(G, \Pi_G)$ is a Lie group $G$ equipped with a Poisson structure $\Pi_G$, making $G$ into a Poisson manifold, such that $\Pi_G$ is multiplicative. In the simplest case, this Poisson-Lie group $(G, \Pi_G)$ describes the symmetries of the coisotropic Poisson quotient $(M = G/H, \pi = \Pi_{G/H})$, where $H$ is a closed subgroup of $G$, the isotropy subgroup of a point $m \in M$, and where the canonical projection $p: G \rightarrow M$ is a Poisson epimorphism. This last condition, at the infinitesimal level, is the coisotropy condition $\delta(\mathfrak{h}) \subset \mathfrak{h} \wedge \mathfrak{g}$. The quantization of coisotropic Poisson quotients gives rise to embeddable noncommutative homogeneous spaces.

In this context, the (1+1) centrally extended Galilei and Poincaré Lie bialgebras given in Eqs.~\eqref{EGalilei LC} and \eqref{EPoincaré LC} are distinct. We have that, for both of them, $\delta(\mathfrak{l}) \subset \mathfrak{l} \wedge \mathfrak{g}$ with $\mathfrak{l} = \langle K \rangle$. We can then construct the coisotropic Poisson quotient $(G/L, \Pi_{G/L})$, with local coordinates $\{\theta, a_0, a_1\}$. In the Galilei case, the Poisson structure is given by 
\begin{equation}
    \{a_0, a_1\} = 0 \ , \quad \{\theta, a_0\} = 0 \ , \quad \{\theta, a_1\} = \alpha \theta \ .
\end{equation}
In the Poincaré case, the Poisson structure is given by
\begin{equation}
        \{a_0, a_1\} = \alpha \theta \ , \quad \{\theta, a_0\} = 0 \ , \quad \{\theta, a_1\} = \alpha \theta \ .
\end{equation}
Additionally, only for the Galilei Lie bialgebra structure \eqref{EGalilei LC}, $\delta(\bar{\mathfrak{l}}) \subset \bar{\mathfrak{l}} \wedge \mathfrak{g}$ with $\bar{\mathfrak{l}} = \langle M, K \rangle$. In the coisotropic Poisson quotient $(G/\bar{L}, \Pi_{G/\bar{L}})$, with local coordinates $\{a_0, a_1\}$, the Poisson structure is trivial:
\begin{equation}
    \{a_0, a_1\} = 0 \ .
\end{equation}
These coisotropic Poisson spaces quantize to noncommutative spaces with commutators embedded in the commutation relations of the quantum group coordinates (see Eqs. \eqref{EGalilei QG(a)} and \eqref{EPoincaré QG(a)}) in the sense that the inclusion is an algebra monomorphism into the underlying algebra structure of the corresponding Hopf algebra, namely the inclusion is an injective algebra homomorphism.

It is also interesting to realize that, in the classification of the Lie bialgebra structures for the (1+1) centrally extended Galilei Lie algebra performed in \cite{Ballesteros:1999ew}, Eq.~\eqref{EGalilei LC} precisely corresponds to the Family II(b) representative after applying the coisotropy condition with respect to $\mathfrak{l}$. We will comment later on the corresponding Galilei and Poincaré embeddable noncommutative homogeneous spaces obtained as quantizations of the coisotropic Poisson quotients of the type $(G/L, \Pi_{G/L})$. We will argue that, in a broad sense, they can generally be interpreted as noncommutative principal bundles over quantum spacetimes.


\subsection{The quantum centrally extended Poincaré algebra}

The next step is the quantization of this Lie bialgebra $(\mathfrak{g}, \delta)$, which yields a quantum algebra $U_\alpha(\mathfrak{g})$, namely a Hopf algebra deformation of the universal enveloping algebra $U(\mathfrak{g})$ such that the Lie bracket \eqref{EPoincaré LA} gives the zeroth order of the commutations relations in the deformation parameter, \textit{i.e.}
\begin{equation}
    [\cdot, \cdot] = [\cdot, \cdot]_0 + \mathcal{O}(\alpha) \ ,
\end{equation}
with $[\cdot, \cdot]_0$ the Lie bracket of $\mathfrak{g}$ extended to $U(\mathfrak{g})$, and the cocommutator \eqref{EPoincaré LC} gives (the skew-symmetric part of) the first order of the coproduct in the deformation parameter, \textit{i.e.}
\begin{equation}
    \Delta = \Delta_0 + \Delta_1 + \mathcal{O}(\alpha^2) \ ,
\end{equation}
with $\Delta_0$ the primitive coproduct and $\delta = \frac{1}{2} (\Delta_1 - \tau \circ \Delta_1)$. We recall that this relation is one-to-one. This commutation relations and this coproduct must be compatible, namely such that the homomorphism condition between the coproduct and the product is verified, \textit{i.e.}~$\Delta([X, Y]) = [\Delta(X), \Delta(Y)], \, \forall X, Y \in U_\alpha(\mathfrak{g})$. 

In general, this a hard problem to solve. We can take advantage in this case of the computational approach exploited in \cite{Ballesteros2013} to obtain quantum algebras as quantizations of dual Poisson--Lie groups. We recall that, after deriving the dual Poisson--Lie group element whose Lie algebra is the dual of Eq.~\eqref{EPoincaré LC}, the coproduct of the quantum Poincaré algebra of interest is obtained using the group multiplication for the local coordinates of the dual Poisson--Lie group. Afterwards, the lengthy but straightforward computation described in \cite{Ballesteros2013} leads to the unique Poisson--Lie structure on the dual Poisson--Lie group whose linearization is isomorphic to the (1+1) centrally extended Poincaré Lie algebra \eqref{EPoincaré LA}. Finally, this Poisson bracket is quantized straightforwardly (no ordering problems show up), thus providing the commutation relations of the quantum Poincaré algebra in question.

We present the result in the following proposition:
\begin{proposition} \label{EPoincaré QA}
    This quantum (1+1) centrally extended Poincaré algebra has the following deformed coproducts for its generators:
    \begin{equation} \label{EPoincaré QA(b)}
        \begin{aligned}
            \Delta(M) &= M \otimes 1 + e^{- \alpha P_1} \otimes M + (e^{- \alpha P_1} - 1) \otimes P_0 \ , \\
            \Delta(P_0) &= P_0 \otimes 1 + 1 \otimes P_0 \ , \\
            \Delta(P_1) &= P_1 \otimes 1 + 1 \otimes P_1 \ , \\
            \Delta(K) &= K \otimes e^{\alpha P_1} + 1 \otimes K \ ,
        \end{aligned}
    \end{equation}
    with the following compatible deformed commutation relations:
    \begin{equation} \label{EPoincaré QA(a)}
        \begin{gathered}
            \relax [P_0, P_1] = 0 \ , \quad [M, P_0] = 0 \ , \quad [M, P_1] = 0 \ , \\
            [K, P_0] = \frac{e^{\alpha P_1} - 1}{\alpha} \ , \quad [K, P_1] = (M + P_0) e^{\alpha P_1} \ , \\
            [K, M] = - \left(\frac{\cosh{(\alpha P_1)} - 1}{\alpha} + \frac{\alpha}{2} (M + P_0)^2 e^{\alpha P_1}\right) \ .
        \end{gathered}
    \end{equation}
\end{proposition}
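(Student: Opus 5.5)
The plan is to verify directly that the coproducts \eqref{EPoincaré QA(b)} and the brackets \eqref{EPoincaré QA(a)} define a Hopf algebra deforming $U(\mathfrak{g})$ with the right semiclassical data. Rather than rederive them from the dual Poisson--Lie group as in \cite{Ballesteros2013}, I will check the defining properties one at a time.

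\textbf{Step 1: Hopf structure of the coproducts.} First I check that \eqref{EPoincaré QA(b)} is coassociative. For $P_0,P_1$ this is immediate, and for $K$ it follows from $\Delta(e^{\alpha P_1})=e^{\alpha P_1}\otimes e^{\alpha P_1}$. For $M$ I would write $\Delta(M+P_0)=(M+P_0)\otimes 1+e^{-\alpha P_1}\otimes(M+P_0)$. This is the key simplification: in the basis $\tilde M:=M+P_0$ the coproduct of $\tilde M$ has exactly the Galilean form \eqref{EGalilei QA(b)}, so coassociativity of $\Delta(M)=\Delta(\tilde M)-\Delta(P_0)$ is clear. I then expand to first order in $\alpha$. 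The skew part of $\Delta(M)$ gives $\alpha\,\tilde M\wedge P_1=\alpha(M\wedge P_1+P_0\wedge P_1)$, and that of $\Delta(K)$ gives $\alpha K\wedge P_1$, reproducing \eqref{EPoincaré LC} from Proposition \ref{Prop}. At zeroth order the brackets \eqref{EPoincaré QA(a)} reduce to \eqref{EPoincaré LA}, since $\cosh(\alpha P_1)-1=\mathcal O(\alpha^2)$.

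\textbf{Step 2: compatibility $\Delta([X,Y])=[\Delta X,\Delta Y]$ for all pairs of generators.} I will work in the basis $\{\tilde M,P_0,P_1,K\}$. There the brackets read
\begin{equation*}
[K,P_0]=\frac{e^{\alpha P_1}-1}{\alpha},\quad [K,P_1]=\tilde M e^{\alpha P_1},\quad [K,\tilde M]=[K,M]+[K,P_0],
\end{equation*}
and the remaining brackets vanish. The abelian ones are trivially compatible. For $[K,P_0]$ and $[K,P_1]$ I use $\Delta(K)=K\otimes e^{\alpha P_1}+1\otimes K$ and $[K,e^{\alpha P_1}]=\alpha\tilde M e^{2\alpha P_1}$. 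The latter holds because $\tilde M$ commutes with $P_1$, so $[K,f(P_1)]=f'(P_1)\tilde M e^{\alpha P_1}$. A short computation then matches both sides.

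\textbf{Step 3: the $[K,\tilde M]$ bracket, the main obstacle.} I expect the real work to be here, since this is where the relativistic correction $\frac{\cosh(\alpha P_1)-1}{\alpha}$ must be fixed. The bracket is
\begin{equation*}
[K,\tilde M]=\frac{e^{\alpha P_1}-1}{\alpha}-\frac{\cosh(\alpha P_1)-1}{\alpha}-\frac{\alpha}{2}\tilde M^2e^{\alpha P_1}=\frac{1-e^{-\alpha P_1}}{2\alpha}\,e^{\alpha P_1}\cdot(\ldots),
\end{equation*}
and the explicit form I would use is $\frac{\sinh(\alpha P_1)}{\alpha}-\frac{\alpha}{2}\tilde M^2e^{\alpha P_1}$, after simplifying $e^{x}-\cosh x=\sinh x$. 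I would then compute $[\Delta K,\Delta\tilde M]$ term by term. The cross term $[K\otimes e^{\alpha P_1},e^{-\alpha P_1}\otimes\tilde M]=[K,e^{-\alpha P_1}]\otimes e^{\alpha P_1}\tilde M$ produces $-\alpha\tilde M\otimes\tilde M$-type pieces; these should reassemble $\Delta(\tilde M^2 e^{\alpha P_1})$, exactly as in the Galilean case. The new piece, $\Delta(\sinh(\alpha P_1))/\alpha$, must be matched against $[K,\tilde M]\otimes e^{\alpha P_1}+e^{-\alpha P_1}\otimes[K,\tilde M]$ restricted to $P_1$-only terms. This works because $\frac{1}{\alpha}(e^{x}\otimes e^{x}-e^{-x}\otimes e^{-x})/2$ splits as $\sinh\otimes e^{x}+e^{-x}\otimes\sinh$. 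I would carry out this bookkeeping carefully.

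As an alternative safety check, I can verify that the given brackets are the unique ones (to all orders) making $\Delta$ a homomorphism. To do so, I would set $[K,\tilde M]=g(P_1)+h\,\tilde M^2e^{\alpha P_1}$ and solve the resulting functional equation $g(x+y)=g(x)e^{y}+e^{-x}g(y)$, whose solution with $g'(0)=1$ is $\sinh(\alpha x)/\alpha$. Finally, the Jacobi-type consistency of the algebra follows because the quantization of \cite{Ballesteros2013} has no ordering ambiguities.
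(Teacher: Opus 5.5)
Your proposal is correct, but it takes a different route from the paper. The paper derives the formulas constructively with the method of \cite{Ballesteros2013}. The coproducts are read off from the group law of the dual Poisson--Lie group whose Lie algebra is fixed by $\delta^*$. The brackets come from the unique Poisson--Lie structure on that dual group linearizing to the centrally extended Poincar\'e algebra, and quantization is immediate since no ordering ambiguities arise.

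You instead certify the stated data a posteriori: coassociativity, $\Delta$ being a homomorphism on generators, the zeroth-order brackets and the first-order cocommutator. What makes this short is your observation that $\tilde M=M+P_0$ commutes with $P_0,P_1$ and has the Galilean-type coproduct of \eqref{EGalilei QA(b)}. Everything then reduces to the Galilean computation plus the identity $\Delta(\sinh(\alpha P_1))=\sinh(\alpha P_1)\otimes e^{\alpha P_1}+e^{-\alpha P_1}\otimes\sinh(\alpha P_1)$. The decisive check does go through: $[\Delta K,\Delta\tilde M]=\Delta\bigl(\sinh(\alpha P_1)/\alpha-\tfrac{\alpha}{2}\tilde M^2e^{\alpha P_1}\bigr)$, with the cross term $-\alpha\,\tilde M\otimes\tilde M e^{\alpha P_1}$ exactly absorbed by the coefficient $-\alpha/2$.

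The paper's route explains where the formulas come from, but as written it is only a sketch deferred to \cite{Ballesteros2013}. Yours is elementary and fully checkable. Your functional equation, which should read $g(x+y)=g(x)e^{\alpha y}+e^{-\alpha x}g(y)$, also shows why the term $(\cosh(\alpha P_1)-1)/\alpha$ is forced, though only within your ansatz. Your $\tilde M$ is exactly the generator $\tilde P_0$ of the basis change \eqref{BC} that the paper later uses to exhibit the spacelike $\kappa$-Poincar\'e structure.

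One justification should be replaced. Invoking the absence of ordering ambiguities in the quantization of \cite{Ballesteros2013} does not belong in a direct verification. Instead, note that in the basis $\{\tilde M,P_0,P_1,K\}$ the algebra is an Ore extension of the ($\alpha$-adically completed) commutative algebra generated by $\tilde M,P_0,P_1$. The extension is by the derivation $D$ determined by $D(X)=[K,X]$ on these three generators, and any such assignment on free commuting generators extends to a derivation. Consistency and the PBW basis are therefore automatic.
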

Notice that $M$ is not anymore central in this particular quantum deformation. One can see that, in the limit $\alpha \to 0$, the Poincaré Lie algebra \eqref{EPoincaré LA} is recovered. We have thus obtained the quantum (1+1) Poincaré algebra $U_\alpha(\mathfrak{g})$. This Hopf algebra deformation is characterized by the commutation relations and coproducts in Eqs.~\eqref{EPoincaré QA(a)} and \eqref{EPoincaré QA(b)}.


\subsection{The universal \texorpdfstring{$T$}{T}-matrix and the quantum Poincaré group}

We finally arrive at the main point of this section, namely the construction of the universal $T$-matrix for this quantum Poincaré algebra. The basics of this procedure were outlined in Section \ref{Sec3} for the case of the $\kappa$-Poincaré quantum algebra and calculations for this quantum Poincaré algebra are described in detail in the \nameref{Appendix}.

The result is summarized in the following proposition:
\begin{proposition}
    The Hopf algebra dual form \eqref{T0} for the quantum (1+1) centrally extended Poincaré algebra obtained in this section is
    \begin{equation} \label{EPoincaré T}
        T = e^{\theta \otimes M} e^{a_0 \otimes P_0} e^{a_1 \otimes P_1} e^{\chi \otimes K} \ ,
    \end{equation}
    where $\theta$, $a_0$, $a_1$, and $\chi$ are the quantum group coordinates, dually related to the accompanying quantum algebra generators in the exponentials of the universal $T$-matrix.
\end{proposition}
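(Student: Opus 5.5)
We follow the strategy used for the timelike $\kappa$-Poincaré case in Section \ref{Sec2}. Order the PBW basis of $U_\alpha(\mathfrak{g})$ as
\begin{equation}
X^{abcd} = M^a P_0^b P_1^c K^d \ .
\end{equation}
Define $\theta = x_{1000}$, $a_0 = x_{0100}$, $a_1 = x_{0010}$, $\chi = x_{0001}$ by the duality \eqref{Duality}. The claim \eqref{EPoincaré T} is then equivalent to the statement that the dual basis factorizes as
\begin{equation}
x_{abcd} = \frac{\theta^a}{a!}\frac{a_0^b}{b!}\frac{a_1^c}{c!}\frac{\chi^d}{d!} \ .
\end{equation}
Indeed, summing $x_{abcd}\otimes X^{abcd}$ then gives exactly the ordered product of four ordinary exponentials.

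\textbf{Step 1: recurrences for $F$.} Since $\Delta$ is an algebra homomorphism, we have
\begin{equation}
\Delta(X^{abcd}) = \Delta(M)\,\Delta(X^{(a-1)bcd}) \ , \qquad \Delta(X^{abcd}) = \Delta(X^{abc(d-1)})\,\Delta(K) \ .
\end{equation}
For the middle generators $P_0,P_1$, which are primitive, we instead pull them out of the monomial. This requires commuting them past $M$, which is harmless because $[M,P_0]=[M,P_1]=0$. From these factorizations we read off the components of $F$ defined in \eqref{FX}.

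\textbf{Step 2: the key components.} What we really need are the components in which one tensor factor is a single generator. Concretely, we want to show
\begin{equation}
F^{abcd}_{1000;\,pqrs} = a\,\delta^a_{p+1}\delta^b_q\delta^c_r\delta^d_s \ ,
\end{equation}
and, symmetrically, that $F^{abcd}_{pqr\,s;\,0001} = d\,\delta\cdots\delta$ with $d$ shifted by one.

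For the leftmost factor, a term $M\otimes(\cdots)$ can only arise from the $M\otimes 1$ part of $\Delta(M)$. The other pieces, $e^{-\alpha P_1}\otimes M$ and $(e^{-\alpha P_1}-1)\otimes P_0$, carry either $1$ or powers of $P_1$ in the first slot, never $M$. Once the first slot is ordered, it has the PBW form $M^{\cdots}P_0^{\cdots}P_1^{\cdots}K^{\cdots}$, so no reordering can produce an extra single $M$ there.

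For the rightmost factor, a term $(\cdots)\otimes K$ comes only from $1\otimes K$ in $\Delta(K)$. This holds because $K$ occupies the last position in the ordering, so in the second tensor slot it is never commuted past anything.

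\textbf{Step 3: obtaining the factorized dual basis.} Inserting these components into \eqref{Fx} gives
\begin{equation}
\theta\, x_{(a-1)bcd} = a\, x_{abcd} \ , \qquad x_{abc(d-1)}\,\chi = d\, x_{abcd} \ .
\end{equation}
Iterating each relation peels off $\theta^a/a!$ on the left and $\chi^d/d!$ on the right. The middle block $x_{0bc0}$ is handled the same way. Since $\Delta(P_0)$ and $\Delta(P_1)$ are primitive and commute with each other, the sub-Hopf algebra generated by $P_0,P_1$ is cocommutative and undeformed, so $x_{0bc0} = \frac{a_0^b}{b!}\frac{a_1^c}{c!}$. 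We must also check the mixed placement, namely that $\theta$ multiplied by an element with $b,c>0$ still acts only on the $a$ index. For this, use $\Delta(M^aP_0^bP_1^c)=\Delta(M)^a\Delta(P_0)^b\Delta(P_1)^c$ together with the argument of Step 2.

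\textbf{Main obstacle.} The delicate point is Step 2, specifically the extra term $(e^{-\alpha P_1}-1)\otimes P_0$ in $\Delta(M)$. It mixes $P_0$ into the second slot, and together with the factor $e^{-\alpha P_1}$ it could contaminate the components $F_{1000;\,pqrs}$. The same risk arises on the right from $K\otimes e^{\alpha P_1}$. We would verify by induction on $a$ that every term of $\Delta(M)^a$ whose first slot is exactly of the form $M^{p}\cdot(\text{stuff})$ comes solely from products of the $M\otimes 1$ factors. The first slot's only other contributions are $P_1$-power series, and these cannot create $M$. Here the chosen ordering, with $M$ first and $K$ last, is essential: it is the same mechanism that made the $\kappa$-Poincaré computation work. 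In Step 3 we commute elements that are only built from $M$, $P_0$, $P_1$; these generators commute among themselves, so no ordering corrections arise there either.
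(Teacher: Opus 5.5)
Your setup and your two outer components, $F^{abcd}_{1000;pqrs}=a\,\delta^a_{p+1}\delta^b_q\delta^c_r\delta^d_s$ and $F^{abcd}_{pqrs;0001}=d\,\delta^a_p\delta^b_q\delta^c_r\delta^d_{s+1}$, agree with the paper's. Your treatment of the middle block $x_{0bc0}$, however, has a genuine gap.

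The fact that $P_0,P_1$ generate an undeformed cocommutative sub-Hopf algebra only fixes how $a_0^b a_1^c$ pairs with the monomials $P_0^qP_1^r$. The product in $H^*$ is dual to the coproduct of all of $U_\alpha(\mathfrak{g})$. You therefore still have to show that $a_0^b a_1^c/(b!\,c!)$ pairs to zero with every $M^pP_0^qP_1^rK^s$ having $p\ge 1$ or $s\ge 1$.

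Your argument cannot decide this, because it would equally give $x_{0110}=a_1a_0$. In fact $\langle a_1a_0,M\rangle$ is the coefficient of $P_1\otimes P_0$ in $\Delta(M)$, which is $-\alpha$ (coming from $(e^{-\alpha P_1}-1)\otimes P_0$), whereas $\langle a_0a_1,M\rangle=0$. Hence $a_1a_0=a_0a_1-\alpha\theta$, which is the relation $[a_0,a_1]=\alpha\theta$ of the quantum group. For the same reason, your closing remark that no ordering corrections arise because $M,P_0,P_1$ commute confuses commutativity in $H$ with commutativity in $H^*$. The coordinates $\theta,a_0,a_1$ do not commute, and the ordering $\theta,a_0,a_1,\chi$ in the $T$-matrix is a genuine output of the computation.

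The missing step is what the paper does: compute, in the full algebra, the components with $P_0$ or $P_1$ alone in the first slot, and peel by left multiplication. The identity $F^{abcd}_{0100;pqrs}=b\,\delta^a_p\delta^b_{q+1}\delta^c_r\delta^d_s$ holds for all indices. The $P_1$-component, by contrast, picks up the linear term of $e^{-\alpha P_1}$ from both $e^{-\alpha P_1}\otimes M$ and $(e^{-\alpha P_1}-1)\otimes P_0$. This gives
\[
a_1\,x_{pqrs}=(r+1)\,x_{pq(r+1)s}-\alpha p\,x_{pqrs}-\alpha (p+1)\,x_{(p+1)(q-1)rs} \ ,
\]
with the last term absent for $q=0$. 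The extra terms vanish exactly when $p=q=0$, which is the only case needed. So the working order is $x_{pqrs}=\frac{\theta^p}{p!}x_{0qrs}$, then $x_{0qrs}=\frac{a_0^q}{q!}x_{00rs}$, then $x_{00rs}=\frac{a_1^r}{r!}x_{000s}$. Peeling $a_1$ before $a_0$ fails.

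Your ``main obstacle'' is therefore misplaced. The term $(e^{-\alpha P_1}-1)\otimes P_0$ cannot affect $F_{1000;pqrs}$, since its first slot has no constant term. It does affect exactly the $a_1$-step that your shortcut skips.

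A smaller point: your reason for the $K$-component (``$K$ is never commuted past anything'') is false, since $\Delta(K)^d$ contains words such as $K e^{\alpha P_1}$ in the second slot. The correct reason is that $[K,M]$, $[K,P_0]$ and $[K,P_1]$ have no constant term, so reordering never produces a scalar multiple of $K$.
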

Its dual nature allows us to derive the commutation relations and the coproducts of the local coordinates in the same way as done in Section \ref{Sec3} for the $\kappa$-Poincaré quantum algebra. We also provide some steps of the computation for this quantum Poincaré algebra in the \nameref{Appendix}.

We present the result in the following proposition:
\begin{proposition} \label{EPoincaré QG}
    The quantum (1+1) centrally extended Poincaré group has the following coproducts for its local coordinates:
    \begin{equation} \label{EPoincaré QG(b)}
        \begin{aligned}
            \Delta(\theta) &= \theta \otimes 1 + 1 \otimes \theta + (\cosh\chi - 1) \otimes a_0 + \sinh\chi \otimes a_1 \ , \\
            \Delta(a_0) &= a_0 \otimes 1 + \cosh\chi \otimes a_0 + \sinh\chi \otimes a_1 \ , \\
            \Delta(a_1) &= a_1 \otimes 1 + \cosh\chi \otimes a_1 + \sinh\chi \otimes a_0 \ , \\
            \Delta(\chi) &= \chi \otimes 1 + 1 \otimes \chi \ ,
        \end{aligned}
    \end{equation}
    with the following compatible deformed commutation relations:
    \begin{equation} \label{EPoincaré QG(a)}
        \begin{gathered}
            \relax [a_0, a_1] = \alpha \theta \ , \quad [\theta, a_0] = 0 \ , \quad [\theta, a_1] = \alpha \theta \ , \quad \\
            [\chi, \theta] = \alpha (\cosh\chi - 1) \ , \quad [\chi, a_0] = \alpha (\cosh\chi - 1) \ , \quad [\chi, a_1] = \alpha \sinh\chi \ .
        \end{gathered}
    \end{equation}
\end{proposition}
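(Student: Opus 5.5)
The plan is to derive both structures of Proposition \ref{EPoincaré QG} from the universal $T$-matrix \eqref{EPoincaré T} and Eq.~\eqref{T-duality}, following the $\kappa$-Poincaré computation of Section \ref{Sec2}. The coproducts come from the first relation in \eqref{T-duality}. The commutation relations come from the structure tensor $F$ via \eqref{Fx}, i.e.\ from the coproduct \eqref{EPoincaré QA(b)} of the quantum algebra.

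\textbf{Coproducts.} I would pick a faithful matrix representation $\rho$ of the quantum algebra \eqref{EPoincaré QA(a)}. It must be faithful enough to carry the central-like generator $M$; the $3\times 3$ fundamental representation does not suffice. Because the Lie algebra \eqref{EPoincaré LA} is a direct sum, a $4\times 4$ extension of the fundamental representation is natural, with $\rho(M)$ a nilpotent matrix feeding an extra row. One checks that the deformed relations \eqref{EPoincaré QA(a)} hold for these matrices: they are nilpotent, so $e^{\alpha\rho(P_1)}$ is polynomial and the corrections like $(\cosh(\alpha P_1)-1)/\alpha$ and $(M+P_0)^2$ vanish or match. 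One then computes
\begin{equation*}
T^\rho = e^{\theta\otimes\rho(M)}e^{a_0\otimes\rho(P_0)}e^{a_1\otimes\rho(P_1)}e^{\chi\otimes\rho(K)} .
\end{equation*}
This should be the Poincaré matrix with an extra entry in which $\theta$ and the $\cosh\chi-1$, $\sinh\chi$ combinations appear. The condition $T^\rho_{\Delta(x)}=T^\rho_{x'}T^\rho_{x''}$ then gives \eqref{EPoincaré QG(b)} entrywise. The $\theta$ coproduct arises from the row coupling $M$ to $P_0$, which reflects the direct-sum change of basis $M+P_0$.

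\textbf{Commutators.} I would use $[x_{\mu},x_{\nu}]=(F^{\rho}_{\mu\nu}-F^{\rho}_{\nu\mu})x_\rho$. For this I need the components of $F$ giving the coefficients of $X^i\otimes X^j$ in $\Delta(X^{abcd})$, with basis ordered $M^aP_0^bP_1^cK^d$ to match \eqref{EPoincaré T}. The analogue of \eqref{F2} follows immediately from the primitive coproducts of $P_0,P_1$ and from left/right multiplication. These relations also justify $x_{abcd}=\theta^a a_0^b a_1^c\chi^d/(a!b!c!d!)$, and hence \eqref{EPoincaré T} itself.

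The pairs then split into two groups.
\begin{itemize}
\item[i)] Pairs among $\theta,a_0,a_1$. These need the terms $M\otimes P_1$, $P_0\otimes P_1$ and so on. They come only from $\Delta(M)$ and from reordering $e^{-\alpha P_1}\otimes M$ past $P_1$. This gives $[a_0,a_1]=\alpha\theta$, $[\theta,a_1]=\alpha\theta$ and $[\theta,a_0]=0$ at linear order, and I expect no higher terms.
\item[ii)] Pairs involving $\chi$. These need the coefficient of $K\otimes X$ in $\Delta(K^m)$ and $\Delta(MK^m)$, etc. Since $\Delta(K)=K\otimes e^{\alpha P_1}+1\otimes K$, moving $K$'s through $e^{\alpha P_1}$ with \eqref{EPoincaré QA(a)} produces series in even and odd powers that sum to $\cosh\chi-1$ and $\sinh\chi$. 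This parallels $F^{00m}_{001;100}=2\omega$ for odd $m$ in the $\kappa$ case.
\end{itemize}

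\textbf{Main obstacle.} I expect the main difficulty to be step ii): extracting these coefficients for all $m$ without a closed recurrence for $\Delta(K^m)$. I would first compute the cases $m\le 4$ explicitly, conjecture the parity pattern, and prove it by induction on $m$, tracking only terms linear in $K$ on the left. As a cross-check, I would require that the resulting relations make \eqref{EPoincaré QG(b)} an algebra homomorphism; this essentially fixes the coefficients of $\cosh\chi-1$ and $\sinh\chi$.
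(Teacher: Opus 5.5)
Your plan is correct and is essentially the paper's own argument. The paper also reads off the coproducts from a $4\times 4$ realization $T^\rho$ by matrix multiplication. It obtains the commutators from $[x_\mu,x_\nu]=(F^\rho_{\mu\nu}-F^\rho_{\nu\mu})x_\rho$, using the parity pattern $F^{000m}_{0001;1000}=F^{000m}_{0001;0100}=\alpha$ for even $m\ge 2$ and $F^{000m}_{0001;0010}=\alpha$ for odd $m$, which it asserts without the explicit induction you propose.

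One small expectation to adjust: in the paper's light-cone representation, with $\rho(K)=\mathrm{diag}(-1,1,0,0)$, the $\theta$-entry of $T^\rho$ is simply $-(\theta-a_0)$. The $\cosh\chi-1$ and $\sinh\chi$ terms in $\Delta(\theta)$ appear only after you combine that entry with $\Delta(a_0)$.
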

The Hopf algebra dual form \eqref{EPoincaré T} is intended to be the relativistic analogue of the universal $T$-matrix of non-relativistic QRF transformations.

What is also worth highlighting is that the embeddable noncommutative homogeneous Minkowski spacetime is of the form $[a_0, a_1] = \alpha \theta$. Notice that this quantum homogeneous spacetime can be considered a generalization of the well-known canonical or Moyal noncommutative spacetime \cite{Groenewold:1946kp, Moyal:1949sk, Doplicher:1994zv, Doplicher:1994tu, Douglas:2001ba, Szabo:2001kg} where $\theta$ is no longer a central extension of the coordinate algebra.

What is more, interestingly, when a (finite-dimensional) Lie algebra $\mathfrak{g}$ is centrally extended to $\mathfrak{\bar{g}}$ and $\mathfrak{\bar{g}}$ integrates to a (finite-dimensional) Lie group with central extension $\bar{G}$, then $\bar{G}$ turns into a (topologically trivial) principal bundle over the Lie group $G$ with the central extension defining the structure group. This is always the case regardless of the nature of the central extension, \textit{i.e.}~trivial or non-trivial. In the case of a trivial central extension, the principal bundle is also trivial as a centrally extended Lie group. The reader is referred to \cite{Tuynman:1987ij, Neeb2002} and references therein for a thorough mathematical treatment of central extensions of Lie algebras and Lie groups. In the cases of interest, \textit{i.e.}~the (1+1) centrally extended Galilei Lie algebra \eqref{EGalilei LA} and the (1+1) centrally extended Poincaré Lie algebra \eqref{EPoincaré LA}, the central extension also integrates to the Lie groups and such Lie groups with central extension are topologically trivial principal $\mathbb{R}$-bundles (or $U(1)$ by quotienting by a discrete subgroup). In contrast with the Galilei case, in the Poincaré case the principal bundle is also trivial as a Lie group with central extension.

One can then construct homogeneous spaces of centrally extended Lie groups as $\bar{G}/\bar{H}$, with $\bar{H}$ a closed subgroup of $\bar{G}$ (not only of $G$). In the appropriate cases, consequently, such homogeneous extended spaces are principal (gauge) bundles over spacetimes. This is the case, for example, of the homogeneous extended Minkowski space obtained by quotienting the (1+1) centrally extended Poincaré Lie group by the Lorentz subgroup, which is a principal $U(1)$-bundle over (1+1) Minkowski spacetime. Note that the coordinate on the $U(1)$ fibers is the coordinate dual to the Lie algebra central extension. As a consequence, in the context of \cite{Ballesteros:2018ghw} where noncommutative principal bundles are understood as noncommutative algebras of functions on principal bundles\footnote{Note that this notion is based on considering only the manifold underlying a principal bundle. In noncommutative geometry, the complete definition of quantum principal bundle imposes additional conditions and is consequently more restrictive \cite{Brzezinski:1992hda, Brzezinski:1992ut, Budzynski:1993dm, Pflaum:1993mt}.}, the embeddable noncommutative homogeneous extended Minkowski space can be considered as a noncommutative principal $U(1)$-bundle over a quantum (1+1) Minkowski spacetime, essentially given by the first line of Eq.~\eqref{EPoincaré QG(a)}. Notice that the (would-be) base spacetime coordinates $a_0$ and $a_1$ do not commute and its commutator is proportional to the coordinate on the $U(1)$ fibers $\theta$, which itself commutes with the time coordinate $a_0$ and does not commute with the space coordinate $a_1$.

One can choose an appropriate basis in this case where the (1+1) centrally extended Poincaré Lie algebra is explicitly expressed as the direct sum of the non-centrally extended one with the central extension. In this basis, the classification of the embeddable noncommutative homogeneous extended Minkowski spaces at first order was performed in \cite{Ballesteros:2018ghw}.

Let us look further at the (1+1) Poincaré Lie algebra with trivial central extension given in Eq.~\eqref{EPoincaré LA}. The aforementioned basis change is just
\begin{equation} \label{BC}
    \tilde{M} = M \ , \quad \tilde{P}_0 = M + P_0 \ , \quad \tilde{P}_1 = P_1 \ , \quad \tilde{K} = K \ .
\end{equation}
The Lie algebra reads simply\footnote{This Lie algebra is also known as the Nappi--Witten algebra and it plays a relevant role in (1+1) gravity \cite{Cangemi:1993sd, Nappi:1993ie}.}
\begin{equation} \label{EPoincaré LA*}
    [\tilde{P}_0, \tilde{P}_1] = 0 \ , \quad [\tilde{K}, \tilde{P}_0] = \tilde{P}_1 \ , \quad [\tilde{K}, \tilde{P}_1] = \tilde{P}_0 \ , \quad [\tilde{M}, \cdot] = 0 \ .
\end{equation}
At the Lie bialgebra level, the cocommutator \eqref{EPoincaré LC} now reads
\begin{equation} \label{EPoincaré LC*}
    \begin{aligned}
        \delta(\tilde{M}) &= \alpha \tilde{P}_0 \wedge \tilde{P}_1 \ , \\
        \delta(\tilde{P}_0) &= \alpha \tilde{P}_0 \wedge \tilde{P}_1 \ , \\
        \delta(\tilde{P}_1) &= 0 \ , \\
        \delta(\tilde{K}) &= \alpha \tilde{K} \wedge \tilde{P}_1 \ .
    \end{aligned}
\end{equation}
One can check in the classification of \cite{Ballesteros:2018ghw} that this Lie bialgebra corresponds to the case B.2, with cocommutator reproduced here with slightly different notation:
\begin{equation}
    \begin{aligned}
        \delta(\tilde{M}) &= \gamma \tilde{P}_0 \wedge \tilde{P}_1 \ , \\
        \delta(\tilde{P}_0) &= v_1 \tilde{P}_0 \wedge \tilde{P}_1 \ , \\
        \delta(\tilde{P}_1) &= - v_0 \tilde{P}_0 \wedge \tilde{P}_1 \ , \\
        \delta(\tilde{K}) &= v_0 \tilde{K} \wedge \tilde{P}_0 + v_1 \tilde{K} \wedge \tilde{P}_1 \ .
    \end{aligned}
\end{equation}
We immediately see that Eq.~\eqref{EPoincaré LC*} is the instance with $v_0 = 0$ and $v_1 = \gamma = \alpha$. In the new basis, the coproducts and the commutation relations of the quantum algebra of Proposition \ref{EPoincaré QA} now become
\begin{equation}
    \begin{aligned}
        \Delta(\tilde{M}) &= \tilde{M} \otimes 1 + 1 \otimes \tilde{M} + (e^{- \alpha \tilde{P}_1} - 1) \otimes \tilde{P}_0 \ , \\
        \Delta(\tilde{P}_0) &= \tilde{P}_0 \otimes 1 + e^{- \alpha \tilde{P}_1} \otimes \tilde{P}_0 \ , \\
        \Delta(\tilde{P}_1) &= \tilde{P}_1 \otimes 1 + 1 \otimes \tilde{P}_1 \ , \\
        \Delta(\tilde{K}) &= \tilde{K} \otimes e^{\alpha \tilde{P}_1} + 1 \otimes \tilde{K} \ ,
    \end{aligned}
\end{equation}
and
\begin{equation}
    \begin{gathered}
        \relax [\tilde{P}_0, \tilde{P}_1] = 0 \ , \quad [\tilde{M}, \tilde{P}_0] = 0 \ , \quad [\tilde{M}, \tilde{P}_1] = 0 \ , \\
        [\tilde{K}, \tilde{P}_0] = \frac{\sinh{(\alpha \tilde{P}_1)}}{\alpha} - \frac{\alpha}{2} \tilde{P}_0^2 e^{\alpha \tilde{P}_1} \ , \quad [\tilde{K}, \tilde{P}_1] = \tilde{P}_0 e^{\alpha \tilde{P}_1} \ , \\
        [\tilde{K}, \tilde{M}] = - \left(\frac{\cosh{(\alpha \tilde{P}_1)} - 1}{\alpha} + \frac{\alpha}{2} \tilde{P}_0^2 e^{\alpha \tilde{P}_1}\right) \ ,
    \end{gathered}
\end{equation}
respectively. One can show that the change of basis \eqref{BC} in a Lie bialgebra, induces the change of basis in the dual Lie bialgebra given by
\begin{equation}
    \tilde{\theta} = \theta - a_0 \ , \quad \tilde{a}_0 = a_0 \ , \quad \tilde{a}_1 = a_1 \ , \quad \tilde{\chi} = \chi \ .
\end{equation}
In the new coordinates, the coproducts and commutation relations of the quantum group of Proposition \ref{EPoincaré QG} now become
\begin{equation}
    \begin{aligned}
        \Delta(\tilde{\theta}) &= \tilde{\theta} \otimes 1 + 1 \otimes \tilde{\theta} \ , \\
        \Delta(\tilde{a}_0) &= \tilde{a}_0 \otimes 1 + \cosh\tilde{\chi} \otimes \tilde{a}_0 + \sinh\tilde{\chi} \otimes \tilde{a}_1 \ , \\
        \Delta(\tilde{a}_1) &= \tilde{a}_1 \otimes 1 + \cosh\tilde{\chi} \otimes \tilde{a}_1 + \sinh\tilde{\chi} \otimes \tilde{a}_0 \ , \\
        \Delta(\tilde{\chi}) &= \tilde{\chi} \otimes 1 + 1 \otimes \tilde{\chi} \ ,
    \end{aligned}
\end{equation}
and
\begin{equation}
    \begin{gathered}
        \relax [\tilde{a}_0, \tilde{a}_1] = \alpha (\tilde{\theta} + \tilde{a}_0) \ , \quad [\tilde{\theta}, \tilde{a}_0] = 0 \ , \quad [\tilde{\theta}, \tilde{a}_1] = 0 \ , \\
        [\tilde{\chi}, \tilde{\theta}] = 0 \ , \quad [\tilde{\chi}, \tilde{a}_0] = \alpha (\cosh\tilde{\chi} - 1) \ , \quad [\tilde{\chi}, \tilde{a}_1] = \alpha \sinh\tilde{\chi} \ ,
    \end{gathered}
\end{equation}
respectively. Note that this is manifestly a Hopf algebra centrally extended non-trivially, namely the Hopf algebra cannot be expressed as the tensor product of the non-centrally extended one with the cocommutative and commutative Hopf subalgebra generated by the central element. It corresponds, more precisely, to a non-trivial central extension of the (1+1) spacelike $\kappa$-Poincaré quantum group, with central extension $\tilde{\theta}$.\footnote{To compare with the (1+1) timelike $\kappa$-Poincaré quantum group considered in Section \ref{Sec2}. We recall that the spacelike counterpart follows immediately by interchanging the role of space and time coordinates.} As a consequence, the embeddable noncommutative homogeneous Minkowski spacetime indeed is a (trivially as a Lie algebra, non-trivially as a comodule algebra) centrally extended (1+1) spacelike $\kappa$-Minkowski noncommutative spacetime, in these coordinates of the form $[\tilde{a}_0, \tilde{a}_1] = \alpha (\tilde{\theta} + \tilde{a}_0)$. In other words, this quantum homogeneous spacetime is a combination of the Moyal plus the spacelike $\kappa$-Minkowski noncommutative spacetimes in 1+1 dimensions. It becomes, in the context of \cite{Ballesteros:2018ghw}, a commutative principal $U(1)$-bundle over the spacelike (1+1) $\kappa$-Minkowski spacetime, where the coordinate on the $U(1)$ fibers appears as a central extension of the base $\kappa$-Minkowski spacetime commutator.

Let us close this section stressing that the adequate basis to perform the contraction procedure corresponding the non-relativistic limit is not the basis explicitly showing that the central extension of the Poincaré Lie algebra is trivial, since in this basis the contraction does not produce the Galilei Lie algebra with non-trivial central extension, but with trivial one. This is why the (1+1) centrally extended Poincaré Lie algebra was initially written as in Eq.~\eqref{EPoincaré LA}. That is the basis to be considered for contractions to the non-relativistic scenario, as demonstrated explicitly in this section. The other basis is very convenient for other purposes. It is adapted to the Lie algebra's direct-sum decomposition and, moreover, its dual basis enables the explicit identification of the dual Hopf algebra with a non-trivial central extension of the (1+1) spacelike $\kappa$-Poincaré quantum group.


\section{The Galilei contraction and the \texorpdfstring{$T$}{T}-matrix for QRFs} \label{Sec5}

We consider here the theory of contractions revised in Section \ref{Sec3}, including the newly introduced contractions of the Hopf algebra dual form, to obtain the Galilei $T$-matrix of QRF transformations. We already applied the contraction scheme at the level of the (1+1) centrally extended Poincaré Lie algebra \eqref{EPoincaré LA} and Lie bialgebra \eqref{EPoincaré LC} in Section \ref{Sec4}. We complete the picture devoting this section to the contractions at the deformed Hopf algebra level. We will finish with the contraction of the corresponding Hopf algebra dual form, ultimately recovering the sought non-relativistic universal $T$-matrix.

We recall from Section \ref{Sec3} that Lie bialgebra contraction uniquely defines the contraction of the corresponding quantum algebra, and of the corresponding quantum group through the contraction of the dual Lie bialgebra. The quantum Poincaré algebra obtained in Section \ref{Sec4} is defined by the commutation relations \eqref{EPoincaré QA(a)} and coproducts \eqref{EPoincaré QA(b)}. The contraction procedure renders
\begin{equation}
    [P_0, P_1]' = 0 \ , \quad [K, M]' = - \frac{\alpha'}{2} M^2 e^{\alpha' P_1} \ , \quad [K, P_0]' = \frac{e^{\alpha' P_1} - 1}{\alpha'} \ , \quad [K, P_1]' = M e^{\alpha' P_1} \ ,
\end{equation}
for the commutation relations (see Eq.~\eqref{EGalilei QA(a)}), and
\begin{equation}
   \begin{aligned}
        \Delta'(M) &= M \otimes 1 + e^{- \alpha' P_1} \otimes M \ , \\
        \Delta'(P_0) &= P_0 \otimes 1 + 1 \otimes P_0 \ , \\
        \Delta'(P_1) &= P_1 \otimes 1 + 1 \otimes P_1 \ , \\
        \Delta'(K) &= K \otimes e^{\alpha' P_1} + 1 \otimes K \ ,
    \end{aligned}
\end{equation}
for the coproducts (see Eq.~\eqref{EGalilei QA(b)}), thus obtaining the quantum Galilei algebra of non-relativistic QRF transformations. Notice that the difference between the Galilei and Poincaré coproducts is the second term in $\Delta(M)$ in \eqref{EPoincaré QA(b)}. 

The contraction map $\varphi_c$ for the quantum group coordinates is given by duality with respect to Eq.~\eqref{C1}, hence
\begin{equation} \label{C2}
    \varphi_c(\theta) = c^2 \theta \ , \quad \varphi_c(a_0) = a_0 \ , \quad \varphi_c(a_1) = c a_1 \ , \quad \varphi_c(\chi) = c \chi \ ,
\end{equation}
and with the same transformation for the deformation parameter $\alpha$, namely $\alpha' = \varphi_c (\alpha) = \alpha c$. The quantum Poincaré group obtained in Section \ref{Sec4}, dual to the quantum Poincaré algebra, is defined by the commutation relations \eqref{EPoincaré QG(a)} and coproducts \eqref{EPoincaré QG(b)}. An analogous contraction procedure with the contraction map \eqref{C2} yields\footnote{We conveniently relabel the coordinates as $\theta \rightarrow \theta$, $a_0 \rightarrow b$, $a_1 \rightarrow a$, $\chi \rightarrow v$ for a better comparison with Ref. \cite{Ballesteros:2025ypr}, since this is the notation used for the (1+1) centrally extended Galilei Lie group coordinates.\label{fn}}
\begin{equation} \label{EGalilei QG(a)}
    [a_0, a_1]' = 0 \ , \quad [\theta, a_1]' = \alpha' \theta \ , \quad [\theta, v]' = - \frac{1}{2} \alpha' v^2 \ , \quad [a_1, v]' = - \alpha' v \ ,
\end{equation}
for the commutation relations (with the rest zero), and
\begin{equation} \label{EGalilei QG(b)}
    \begin{aligned}
        \Delta'(\theta) &= \theta \otimes 1 + 1 \otimes \theta + v \otimes a + \frac{1}{2} v^2 \otimes b \ , \\
        \Delta'(b) &= b \otimes 1 + 1 \otimes b \ , \\
        \Delta'(a) &= a \otimes 1 + 1 \otimes a + v \otimes b \ , \\
        \Delta'(v) &= v \otimes 1 + 1 \otimes v \ ,
    \end{aligned}
\end{equation}
for the coproducts, thus obtaining the quantum Galilei group of non-relativistic QRF transformations.

The last step is to apply the contraction theory of Hopf algebra dual forms to the Poincaré $T$-matrix \eqref{EPoincaré T}. Finally, Eq.~\eqref{Lim4} produces
\begin{equation}
    T' = \lim_{c \to + \infty} e^{\varphi_c(\theta) \otimes \phi_c(M)} \, e^{\varphi_c(a_0) \otimes \phi_c(P_0)} \, e^{\varphi_c(a_1) \otimes \phi_c(P_1)} \, e^{\varphi_c(\chi) \otimes \phi_c(K)} \ ,
\end{equation}
and taking the limit $c \to + \infty$,
\begin{equation} \label{EGalilei T}
    T' = e^{\theta \otimes M} \, e^{b \otimes P_0} \, e^{a \otimes P_1} \, e^{v \otimes K} \ ,
\end{equation}
which is the Galilei $T$-matrix formalizing the framework of non-relativistic QRF transformations. As already anticipated at the end of the previous section, the contracted Galilei Hopf algebra dual form \eqref{EGalilei T} is formally identical to the initial Poincaré one \eqref{EPoincaré T}, modulo coordinate relabeling (see Footnote \ref{fn}).

Let us mention that, remarkably, the embeddable noncommutative homogeneous extended Galilei space renders, in the sense of \cite{Ballesteros:2018ghw}, a noncommutative principal $U(1)$-bundle over a classical (1+1) Galilei spacetime. One can check in Eq.~\eqref{EGalilei QG(a)} that the (would-be) base spacetime becomes commutative but the coordinate on the $U(1)$ fibers inherits the same noncommutativity with spacetime coordinates as the corresponding Poincaré counterpart.


\section{Concluding remarks} \label{Sec6}

This article is based on the connection, recently reported in \cite{Ballesteros:2025ypr}, between the QRF transformations introduced in \cite{Giacomini:2017zju} and the universal $T$-matrix presentation of a quantum deformation of the (1+1) centrally extended Galilei Lie group. We have set here the mathematical foundations underlying the special relativistic generalization of this result.

We have found specifically that, under mild assumptions, there is a unique quantum deformation of the (1+1) centrally extended Poincaré Lie group whose non-relativistic counterpart is precisely the quantum Galilei group for QRF transformations of \cite{Ballesteros:2025ypr}. This limit is achieved by generalizing the Lie bialgebra approach to quantum algebra contractions developed in \cite{Ballesteros:1994iv, Ballesteros:1994at} to multiparametric Lie bialgebras and afterwards by introducing the contraction theory of universal $T$-matrices for the first time. In particular, starting from the Lie bialgebra associated with the most generic multiparametric Hopf algebra deformation of the (1+1) centrally extended Poincaré Lie algebra and assuming that the non-relativistic limit is given by the so-called fundamental multiparametric Lie bialgebra contraction, a unique quantum Poincaré algebra leading to the quantum Galilei algebra of interest is obtained. Afterwards, the Hopf algebra dual form associated with this quantum Poincaré deformation is derived and self-consistency of the method is proven, since the Galilei limit of such Poincaré $T$-matrix provides the universal $T$-matrix of \cite{Ballesteros:2025ypr} underlying non-relativistic QRF transformations. What is more, the Poincaré dual Hopf algebra is recognized, remarkably, as a non-trivial central extension of the (1+1) spacelike $\kappa$-Poincaré quantum group.

Additionally, from the mathematical perspective, it is worth emphasizing that a thorough analysis of Hopf algebra deformations of central extensions of Lie algebras and Lie groups has not been considered in the literature, despite the fact that they generate non-trivial features. It can be appreciated in \cite{Ballesteros:2018ghw, Ballesteros1999, Ballesteros:1999ew} that, even in the case where the Lie algebra central extension is trivial, it turns out that many quantum deformations of the extended Lie algebra cannot be written as a direct sum of Hopf algebras with the former central extension generating a Hopf subalgebra. As a consequence, the former central extension plays a highly non-trivial role at the quantum algebra and quantum group levels, generally converting into noncommutative. This is precisely the case of the quantum centrally extended Galilei and Poincaré algebras considered here, where both the former Lie algebra central extension and/or its corresponding Lie group local coordinate turn to noncommutative in a non-trivial way. In this regard, the associated Galilei and Minkowski embeddable noncommutative homogeneous extended spacetimes (see Eqs.~\eqref{EGalilei QG(a)} and \eqref{EPoincaré QG(a)}, respectively) have been shortly discussed within the approach of \cite{Ballesteros:2018ghw} where, in a broad sense, noncommutative spaces of this type were interpreted as noncommutative principal bundles over noncommutative spacetimes. In this context, the coordinate dual to the former Lie algebra central extension is interpreted as the coordinate on the fibers of a generally noncommutative principal bundle. It is also worth stressing that the classification of the (2+1) and (3+1) centrally extended Galilei and Poincaré Lie bialgebras and the corresponding quantum deformations is lacking in the literature. This classification would be essential in order to provide examples for the computation of the corresponding Galilei and Poincaré $T$-matrices in higher dimensions, which would be important, for instance, to investigate its potential role as QRF transformations beyond (1+1) dimensions.

Alternatively, from the QRF point of view, a challenge to be faced is the construction of a perspectival relativistic setting for inertial QRF transformations, which would be potentially realized by a quantum Poincaré group, or more specifically, by a Poincaré $T$-matrix. The aim of this paper is to present a consistent mathematical starting point to this end, motivated by the connection between non-relativistic QRF transformations and universal $T$-matrices provided in \cite{Ballesteros:2025ypr}. In this context, an appropriate relativistic counterpart of the framework of \cite{Giacomini:2017zju} should be introduced, and the phase space representations of the quantum Poincaré algebra and group entering the universal $T$-matrix should be studied. We recall that relativistic QRF transformations have also been considered within the different QRF approaches, including those introduced in \cite{Giacomini:2018gxh, delaHamette:2020dyi, Giacomini:2021gei, Apadula:2022pxk}, and some of them could deserve further investigation in light of the results of this work. It may be worth analyzing, for example, the physics of the noncommutative extended Minkowski spacetime following from the quantum Poincaré group introduced here, in terms of QRFs. Notice that, contrary to the Galilei case following from Eq.~\eqref{EGalilei QG(a)}, the Minkowski case following from Eq.~\eqref{EPoincaré QG(a)} contains a noncommutative time coordinate, which could induce, in a relativistic scenario, superpositions of proper times \cite{Giacomini:2021gei}.

Finally, the connection between quantum principal bundles and QRF transformations, recently reported in \cite{AliAhmad:2024vdw}, could give insight into the physics of the quantum coordinates related to the Galilei and Poincaré central extensions within this context.


\section*{Acknowledgements}

DFS acknowledges the hospitality of the Department Mathematik at Friedrich-Alexander-Universität Erlangen-Nürnberg and is grateful to Catherine Meusburger for helpful discussions related to this work. The authors acknowledge support from the Grant No. PID2023-148373NB-I00 funded by MCIN/AEI/10.13039/501100011033/FEDER, UE, by the Q-CAYLE Project funded by the Regional Government of Castilla y León (Junta de Castilla y León), and by the Ministry of Science and Innovation (MCIN) through the European Union funds NextGenerationEU (PRTR C17.I1). DFS acknowledges support from Universidad de Burgos through a PhD grant. This work contributes to the COST Action CA23130 ``Bridging high and low energies in search of quantum gravity (BridgeQG)''.



\newpage

\appendix

\section*{Appendix} \label{Appendix}

\addcontentsline{toc}{section}{Appendix}
\renewcommand{\theequation}{A.\arabic{equation}}
\setcounter{equation}{0}

In this Appendix, we consider the construction of the universal $T$-matrix for the Hopf algebra deformation of the universal enveloping algebra of the (1+1) centrally extended Poincaré Lie algebra of Proposition \ref{EPoincaré QA}, with coproducts and commutation relations reproduced here:
\begin{equation}
    \begin{aligned}
        \Delta(M) &= M \otimes 1 + e^{- \alpha P_1} \otimes M + (e^{- \alpha P_1} - 1) \otimes P_0 \ , \\
        \Delta(P_0) &= P_0 \otimes 1 + 1 \otimes P_0 \ , \\
        \Delta(P_1) &= P_1 \otimes 1 + 1 \otimes P_1 \ , \\
        \Delta(K) &= K \otimes e^{\alpha P_1} + 1 \otimes K \ ,
    \end{aligned}
\end{equation}
and
\begin{equation}
    \begin{gathered}
        \relax [P_0, P_1] = 0 \ , \quad [M, P_0] = 0 \ , \quad [M, P_1] = 0 \ , \\
        [K, P_0] = \frac{e^{\alpha P_1} - 1}{\alpha} \ , \quad [K, P_1] = (M + P_0) e^{\alpha P_1} \ , \\
        [K, M] = - \left(\frac{\cosh{(\alpha P_1)} - 1}{\alpha} + \frac{\alpha}{2} (M + P_0)^2 e^{\alpha P_1}\right) \ ,
    \end{gathered}
\end{equation}
respectively.

The basis elements are chosen in the following order:
\begin{equation}
    X^{a b c d} = M^a P_0^b P_1^c K^d \ ,
\end{equation}
with the dual basis elements defined by Eq.~\eqref{Duality}. In this case, $X^{1 0 0 0} = M$, $X^{0 1 0 0} = P_0$, $X^{0 0 1 0} = P_1$, $X^{0 0 0 1} = K$, so that the dual coordinate functions on the quantum group are defined locally as $x_{1 0 0 0} = \theta$, $x_{0 1 0 0} = a_0$, $x_{0 0 1 0} = a_1$, $x_{0 0 0 1} = \chi$. We have here that
\begin{equation}
    \begin{aligned}
        \Delta(X^{a b c d}) &= \Delta(M) \Delta(X^{(a - 1) b c d}) \ , \\
        \Delta(X^{a b c d}) &= \Delta(P_0) \Delta(X^{a (b - 1) c d}) \ ,  \\
        \Delta(X^{a b c d}) &= \Delta(P_1) \Delta(X^{a b (c - 1) d}) \ ,  \\
        \Delta(X^{a b c d}) &= \Delta(X^{a b c (d - 1)}) \Delta(K) \ , 
    \end{aligned}
\end{equation}
and the following recurrence relations for the structure tensor $F$ from Eq.~\eqref{FX}:
\begin{equation} \label{F1*}
    \begin{aligned}
        F_{i j k l ; p q r s}^{a b c d} &= F_{(i - 1) j k l ; p q r s}^{(a - 1) b c d} + \left(\sum_{n = 0}^k F_{i j n l ; (p - 1) q r s}^{(a - 1) b c d} + \sum_{n = 0}^{k - 1} F_{i j n l ; p (q - 1) r s}^{(a - 1) b c d}\right) \frac{(- \alpha)^{k - n}}{(k - n)!} \ , \\
        F_{i j k l ; p q r s}^{a b c d} &= F_{i (j - 1) k l ; p q r s}^{a (b - 1) c d} + F_{i j k l ; p (q - 1) r s}^{a (b - 1) c d} \ , \\
        F_{i j k l ; p q r s}^{a b c d} &= F_{i j (k - 1) l ; p q r s}^{a b (c - 1) d} + F_{i j k l ; p q (r - 1) s}^{a b (c - 1) d} \ .
    \end{aligned}
\end{equation}
The recurrence relation coming from $\Delta(X^{a b c d}) = \Delta(X^{a b c (d - 1)}) \Delta(K)$ is very difficult to find in general. We can show, specifically, that 
\begin{equation} \label{F2*}
    \begin{aligned}
        F_{1 0 0 0 ; p q r s}^{a b c d} &= a \, \delta_{p + 1}^a \, \delta_q^b \, \delta_r^c \, \delta_s^d \ , \\
        F_{0 1 0 0 ; p q r s}^{a b c d} &= b \, \delta_p^a \, \delta_{q + 1}^b \, \delta_r^c \, \delta_s^d \ , \\
        F_{0 0 1 0 ; p q r s}^{a b c d} &= c \, \delta_p^a \, \delta_q^b \, \delta_{r + 1}^c \, \delta_s^d \ , \\
        F_{i j k l ; 0 0 0 1}^{a b c d} &= d \, \delta_i^a \, \delta_j^b \, \delta_k^c \delta_{l + 1}^d \ .
    \end{aligned}
\end{equation}
We just need the above components of the structure tensor $F$ to compute the dual basis, so by taking into account Eq.~\eqref{Fx} together with Eq.~\eqref{F2*},
\begin{equation}
    x_{1 0 0 0} x_{(p - 1) q r s} = F_{1 0 0 0 ; (p - 1) q r s}^{a b c d} x_{a b c d} = p \, x_{p q r s} \ ,
\end{equation}
hence
\begin{equation}
    x_{p q r s} = \frac{\theta}{p} x_{(p - 1) q r s} = \dots = \frac{\theta^p}{p!} x_{0 q r s} \ .
\end{equation}
then, by repeating the same strategy,
\begin{equation}
	x_{a b c d} = \frac{\theta^a}{a!} \frac{a_0^b}{b!} \frac{a_1^c}{c!} \frac{\chi^d}{d!} \ ,
\end{equation}
therefore, the universal $T$-matrix \eqref{T0} is
\begin{equation}
    T = e^{\theta \otimes M} e^{a_0 \otimes P_0} e^{a_1 \otimes P_1} e^{\chi \otimes K} \ .
\end{equation}
This is the proof of Proposition \ref{EPoincaré T}.

The commutation relations for the coordinates on the dual quantum group follow from Eq.~\eqref{Fx} as well by
\begin{equation}
    [x_{i j k l}, x_{p q r s}] = (F_{i j k l ; p q r s}^{a b c d} - F_{p q r s ; i j k l}^{a b c d}) x_{a b c d} \ .
\end{equation}
The commutation relations $[\theta, a_0]$, $[\theta, a_1]$, $[a_0, a_1]$ are obtained from the recurrence relations \eqref{F1*}, but the remaining three, $[\chi, \theta]$, $[\chi, a_0]$, $[\chi, a_1]$, require the missing recurrence relation. We must look for the specific components of the structure tensor $F$ involved in them.

In the case of $[\chi, \theta]$, $F_{0 0 0 1 ; 1 0 0 0}^{a b c d}$ is, by duality, the coefficient of the term $K \otimes M$ in the coproduct of $X^{a b c d}$. This term appears when $a = 1$, $b = 0$, $c = 0$, $d = 1$, as well as when $a = 0$, $b = 0$, $c = 0$, and $d$ arbitrary. One can show that $F_{0 0 0 1 ; 1 0 0 0}^{1 0 0 1} = 1$ and $F_{0 0 0 1 ; 1 0 0 0}^{0 0 0 m} = \alpha$ for $m = 2 n$ (with $n = 1, 2, \dots$). On the other hand, $F_{1 0 0 0 ; 0 0 0 1}^{a b c d}$ is given by Eq.~\eqref{F2*}.

In the case of $[\chi, a_0]$, $F_{0 0 0 1 ; 0 1 0 0}^{a b c d}$ is, by duality, the coefficient of the term $K \otimes P_0$ in the coproduct of $X^{a b c d}$. This term appears when $a = 0$, $b = 1$, $c = 0$, $d = 1$, as well as when $a = 0$, $b = 0$, $c = 0$, and $d$ arbitrary. One can show that $F_{0 0 0 1 ; 0 1 0 0}^{0 1 0 1} = 1$ and $F_{0 0 0 1 ; 0 1 0 0}^{0 0 0 m} = \alpha$ for $m = 2 n$ (with $n = 1, 2, \dots$). On the other hand, $F_{0 1 0 0 ; 0 0 0 1}^{a b c d}$ is given by Eq.~\eqref{F2*}.

In the case of $[\chi, a_1]$, $F_{0 0 0 1 ; 0 0 1 0}^{a b c d}$ is, by duality, the coefficient of the term $K \otimes P_1$ in the coproduct of $X^{a b c d}$. This term appears when $a = 0$, $b = 0$, $c = 1$, $d = 1$, as well as when $a = 0$, $b = 0$, $c = 0$, and $d$ arbitrary. One can show that $F_{0 0 0 1 ; 0 0 1 0}^{0 0 1 1} = 1$ and $F_{0 0 0 1 ; 0 0 1 0}^{0 0 0 m} = \alpha$ for $m = 2 n + 1$ (with $n = 0, 1, 2, \dots$). On the other hand, $F_{0 0 1 0 ; 0 0 0 1}^{a b c d}$ is given by Eq.~\eqref{F2*}.

The coproducts could also be derived in a similar fashion, but given the universal $T$-matrix \eqref{EPoincaré T}, they follow from the corresponding group-like property of the universal $T$-matrix in the sense of Eq.~\eqref{T-duality}, simply a matrix multiplication when realizing the universal $T$-matrix as a true matrix. The fundamental representation of the quantum algebra of Proposition \ref{EPoincaré QA} is given by\footnote{This is also the fundamental representation of the (1+1) centrally extended Poincaré Lie algebra \eqref{EPoincaré LA}.}
\begin{equation}
    \scalebox{0.75}{$
    \rho(M) = \left(\begin{array}{cccc}
    0 & 0 & 0 & 0 \\
    0 & 0 & 0 & 0 \\
    0 & 0 & 0 & - 1 \\
    0 & 0 & 0 & 0
    \end{array}\right) \ , \quad
    \rho(P_0) = \left(\begin{array}{cccc}
    0 & 0 & 0 & - \frac{1}{2} \\
    0 & 0 & 0 & - \frac{1}{2} \\
    0 & 0 & 0 & 1 \\
    0 & 0 & 0 & 0
    \end{array}\right) \ , \quad 
    \rho(P_1) = \left(\begin{array}{cccc}
    0 & 0 & 0 & \frac{1}{2} \\
    0 & 0 & 0 & - \frac{1}{2} \\
    0 & 0 & 0 & 0 \\
    0 & 0 & 0 & 0
    \end{array}\right) \ , \quad 
    \rho(K) = \left(\begin{array}{cccc}
    - 1 & 0 & 0 & 0 \\
    0 & 1 & 0 & 0 \\
    0 & 0 & 0 & 0 \\
    0 & 0 & 0 & 0
    \end{array} \right) \ .
    $}
\end{equation}
As a consequence, the realization of the $T$-matrix \eqref{EPoincaré T} is
\begin{equation}
    T^\rho := (1 \otimes \rho) \, T = e^{\theta \otimes \rho(M)} e^{a_0 \otimes \rho(P_0)} e^{a_1 \otimes \rho(P_1)} e^{\chi \otimes \rho(K)} = \left(\begin{array}{cccc}
    e^{- \chi} & 0 & 0 & - \frac{1}{2}(a_0 - a_1) \\
    0 & e^\chi & 0 & - \frac{1}{2}(a_0 + a_1) \\
    0 & 0 & 1 & - (\theta - a_0) \\
    0 & 0 & 0 & 1
    \end{array}\right) \ ,
\end{equation}
which is just the centrally extended Poincaré Lie group element in a neighborhood of the identity with noncommutative entries. The coproduct map for its coordinate functions is read off from the matrix multiplication of two copies of $T^\rho$.

This is the sketch of the proof of Proposition \ref{EPoincaré QG}.


\newpage

\addcontentsline{toc}{section}{References}

\bibliographystyle{utphys2}

\bibliography{main}

@book{Chari1994,
    author = {Chari, V. and Pressley, A. N.},
    title = "{A Guide to Quantum Groups}",
    publisher = {Cambridge University Press},
    place = {Cambridge},
    year = {1994}
}

@book{Majid1995,
    author = {Majid, S.},
    title = "{Foundations of Quantum Group Theory}",
    publisher = {Cambridge University Press},
    place = {Cambridge},
    year = {1995}
}

@incollection{Fronsdal1994,
    author = "C. Fronsdal and A. Galindo",
    editor = "{P. J. Sally, M. Flato, J. Lepowsky, N. Reshetikhin, G. J. Zuckerman}",
    title = "{The universal $T$-matrix}",
    doi = "10.1090/conm/175",
    booktitle = "{Mathematical aspects of conformal and topological field theories and quantum groups}",
    publisher = "American Mathematical Society",
    pages = "73--88",
    year = "1994",
    isbn = "0-8218-5186-1",
}

@inproceedings{Celeghini1992,
    author = "Celeghini, E. and Giachetti, R. and Sorace, E. and Tarlini, M.",
    editor = "Kulish, Petr P.",
    title = "{Contractions of quantum groups}",
    doi = "10.1007/BFb0101192",
    booktitle = "Quantum Groups",
    publisher = "Springer Berlin Heidelberg",
    pages = "221--244",
    year = "1992",
    isbn = "978-3-540-47020-5"
}

@article{Poincare1900,
    author = "Poincaré, H.",
    title = "{Sur les groupes continus}",
    journal = "Trans. Cambr. Philos. Soc.",
    volume = "18",
    pages = "220--255",
    year = "1900"
}

@article{Birkhoff1937,
    author = "Birkhoff, G.",
    title = "{Representability of Lie algebras and Lie groups by matrices}",
    doi = "10.2307/1968569",
    journal = "Ann. Math.",
    volume = "38",
    pages = "526--532",
    year = "1937"
}

@article{Witt1937,
    author = "Witt, E.",
    title = "{Treue darstellung liescher ringe}",
    journal = "Jour. für die reine und angewandte Mathematik",
    doi = "10.1515/crll.1937.177.152",
    volume = "177",
    pages = "152--160",
    year = "1937"
}

@article{Groenewold:1946kp,
    author = "Groenewold, H. J.",
    title = "{On the principles of elementary quantum mechanics}",
    doi = "10.1016/S0031-8914(46)80059-4",
    journal = "Physica",
    volume = "12",
    pages = "405--460",
    year = "1946"
}

@article{Moyal:1949sk,
    author = "Moyal, J. E.",
    title = "{Quantum mechanics as a statistical theory}",
    doi = "10.1017/S0305004100000487",
    journal = "Proc. Cambridge Phil. Soc.",
    volume = "45",
    pages = "99--124",
    year = "1949"
}

@article{Inonu:1953sp,
    author = "Inonu, E. and Wigner, Eugene P.",
    title = "{On the contraction of groups and their representations}",
    doi = "10.1073/pnas.39.6.510",
    journal = "Proc. Nat. Acad. Sci.",
    volume = "39",
    pages = "510--524",
    year = "1953"
}

@article{Aharonov1967a,
    author = "Aharonov, Yakir and Susskind, Leonard",
    title = "{Charge superselection rule}",
    doi = "10.1103/PhysRev.155.1428",
    journal = "Phys. Rev.",
    volume = "155",
    pages = "1428--1431",
    year = "1967"
}

@article{DeWitt1967,
    author = "DeWitt, Bryce S",
    title = "{Quantum theory of gravity. I. The canonical theory}",
    doi = "10.1103/PhysRev.160.1113",
    journal = "Phys. Rev.",
    volume = "160",
    pages = "1113",
    year = "1967"
}

@article{Drinfeld:1983ky,
    author = "Drinfeld, V. G.",
    title = "{Hamiltonian structures of Lie groups, Lie bialgebras and the geometric meaning of the classical Yang-Baxter equations}",
    journal = "Sov. Math. Dokl.",
    volume = "27",
    pages = "68--71",
    year = "1983"
}

@article{Aharonov1984,
    author = "Aharonov, Y. and Kaufherr, T.",
    title = "{Quantum frames of reference}",
    doi = "10.1103/PhysRevD.30.368",
    journal = "Phys. Rev. D",
    volume = "30",
    pages = "368--385",
    year = "1984"
}

@article{Aldaya:1985plo,
    author = "Aldaya, V. and de Azcárraga, J. A.",
    title = "{Cohomology, central extensions, and (dynamical) groups}",
    doi = "10.1007/BF00672649",
    journal = "Int. J. Theor. Phys.",
    volume = "24",
    pages = "141--154",
    year = "1985"
}

@article{Drinfeld:1986in,
    author = "Drinfeld, V. G.",
    title = "{Quantum groups}",
    doi = "10.1007/BF01247086",
    journal = "Zap. Nauchn. Semin.",
    volume = "155",
    pages = "18--49",
    year = "1986"
}

@article{Tuynman:1987ij,
    author = "Tuynman, G. M. and Wiegerinck, W. A. J. J.",
    title = "{Central extensions and physics}",
    doi = "10.1016/0393-0440(87)90027-1",
    journal = "J. Geom. Phys.",
    volume = "4",
    pages = "207--258",
    year = "1987"
}

@article{Celeghini:1990bf,
    author = "Celeghini, E. and Giachetti, R. and Sorace, E. and Tarlini, M.",
    title = "{Three-dimensional quantum groups from contraction of $SU_q(2)$}",
    doi = "10.1063/1.529000",
    journal = "J. Math. Phys.",
    volume = "31",
    pages = "2548--2551",
    year = "1990"
}

@article{Celeghini:1990xx,
    author = "Celeghini, Enrico and Giachetti, Riccardo and Sorace, Emanuele and Tarlini, Marco",
    title = "{The three-dimensional Euclidean quantum group $E_q(3)$ and its R-matrix}",
    reportNumber = "PRINT-90-0691 (FLORENCE)",
    doi = "10.1063/1.529312",
    journal = "J. Math. Phys.",
    volume = "32",
    pages = "1159--1165",
    year = "1991"
}

@article{Rovelli:1990pi,
    author = "Rovelli, Carlo",
    title = "{Quantum reference systems}",
    reportNumber = "PITT-90-11",
    doi = "10.1088/0264-9381/8/2/012",
    journal = "Class. Quant. Grav.",
    volume = "8",
    pages = "317--332",
    year = "1991"
}

@article{Fronsdal:1991gf,
    author = "Fronsdal, C. and Galindo, A.",
    title = "{The dual of a quantum group}",
    reportNumber = "UCLA-91-TEP-54",
    doi = "10.1007/BF00739590",
    journal = "Lett. Math. Phys.",
    volume = "27",
    pages = "59--72",
    year = "1993"
}

@article{Lukierski:1991pn,
    author = "Lukierski, Jerzy and Ruegg, Henri and Nowicki, Anatol and Tolstoi, Valerii N.",
    title = "{Q deformation of Poincaré algebra}",
    reportNumber = "UGVA-DPT-1991-02-710",
    doi = "10.1016/0370-2693(91)90358-W",
    journal = "Phys. Lett. B",
    volume = "264",
    pages = "331--338",
    year = "1991"
}

@article{Lukierski:1991ff,
    author = "Lukierski, Jerzy and Nowicki, Anatol and Ruegg, Henri",
    title = "{Real forms of complex quantum anti-de Sitter algebra $U_q(Sp(4;C))$ and their contraction schemes}",
    eprint = "hep-th/9108018",
    archivePrefix = "arXiv",
    reportNumber = "UGVA-DPT-1991-08-740",
    doi = "10.1016/0370-2693(91)90094-7",
    journal = "Phys. Lett. B",
    volume = "271",
    pages = "321--328",
    year = "1991"
}

@article{Bonechi:1992cb,
    author = "Bonechi, F. and Celeghini, E. and Giachetti, R. and Sorace, E. and Tarlini, M.",
    title = "{Quantum Galilei group as symmetry of magnons}",
    eprint = "hep-th/9203048",
    archivePrefix = "arXiv",
    reportNumber = "DFF-156-03-92",
    doi = "10.1103/PhysRevB.46.5727",
    journal = "Phys. Rev. B",
    volume = "46",
    pages = "5727--5730",
    year = "1992"
}

@article{Bonechi:1992ye,
    author = "Bonechi, F. and Celeghini, E. and Giachetti, R. and Sorace, E. and Tarlini, M.",
    title = "{Heisenberg XXZ model and quantum Galilei group}",
    eprint = "hep-th/9204054",
    archivePrefix = "arXiv",
    reportNumber = "DFF-160-04-92",
    doi = "10.1088/0305-4470/25/15/007",
    journal = "J. Phys. A",
    volume = "25",
    pages = "L939--L943",
    year = "1992"
}

@article{Brzezinski:1992hda,
    author = "Brzezinski, Tomasz and Majid, Shahn",
    title = "{Quantum group gauge theory on quantum spaces}",
    eprint = "hep-th/9208007",
    archivePrefix = "arXiv",
    reportNumber = "DAMTP-92-27",
    doi = "10.1007/BF02096884",
    journal = "Commun. Math. Phys.",
    volume = "157",
    pages = "591--638",
    year = "1993",
    note = "[Erratum: Commun.Math.Phys. 167, 235 (1995)]"
}

@article{Brzezinski:1992ut,
    author = "Brzezinski, Tomasz and Majid, Shahn",
    title = "{Quantum group gauge theory on classical spaces}",
    eprint = "hep-th/9210024",
    archivePrefix = "arXiv",
    reportNumber = "DAMTP-92-51",
    doi = "10.1016/0370-2693(93)91830-G",
    journal = "Phys. Lett. B",
    volume = "298",
    pages = "339--343",
    year = "1993"
}

@article{Giller:1992xg,
    author = "Giller, S. and Kosinski, P. and Majewski, M. and Maslanka, P. and Kunz, J.",
    title = "{More about Q deformed Poincaré algebra}",
    reportNumber = "KFT-UL-2-92",
    doi = "10.1016/0370-2693(92)90158-Z",
    journal = "Phys. Lett. B",
    volume = "286",
    pages = "57--62",
    year = "1992"
}

@article{Lukierski:1992dt,
    author = "Lukierski, Jerzy and Nowicki, Anatol and Ruegg, Henri",
    title = "{New quantum Poincaré algebra and k deformed field theory}",
    reportNumber = "UGVA-DPT-1992-07-776, LPTB-92-04",
    doi = "10.1016/0370-2693(92)90894-A",
    journal = "Phys. Lett. B",
    volume = "293",
    pages = "344--352",
    year = "1992"
}

@article{Semenov-Tian-Shansky:1992xxt,
    author = "Semenov-Tian-Shansky, M. A.",
    title = "{Poisson-Lie groups, quantum duality principle, and the quantum double}",
    eprint = "hep-th/9304042",
    archivePrefix = "arXiv",
    reportNumber = "PRINT-93-0378 (STEKLOV)",
    doi = "10.1007/BF01083527",
    journal = "Theor. Math. Phys.",
    volume = "93",
    pages = "1292--1307",
    year = "1992"
}

@article{Bonechi:1993sn,
    author = "Bonechi, F. and Celeghini, E. and Giachetti, R. and Perena, C. M. and Sorace, E. and Tarlini, M.",
    title = "{Exponential mapping for nonsemisimple quantum groups}",
    eprint = "hep-th/9311114",
    archivePrefix = "arXiv",
    reportNumber = "DFF-192-9-93",
    doi = "10.1088/0305-4470/27/4/023",
    journal = "J. Phys. A",
    volume = "27",
    pages = "1307--1316",
    year = "1994"
}

@article{Budzynski:1993dm,
    author = "Budzynski, Robert J. and Kondracki, Witold",
    title = "{Quantum principal fiber bundles: Topological aspects}",
    eprint = "hep-th/9401019",
    archivePrefix = "arXiv",
    reportNumber = "IM-PAN-517",
    year = "1993"
}

@article{Cangemi:1993sd,
    author = "Cangemi, D. and Jackiw, R.",
    title = "{Poincaré gauge theory for gravitational forces in (1+1) dimensions}",
    eprint = "hep-th/9302026",
    archivePrefix = "arXiv",
    reportNumber = "MIT-CTP-2165",
    doi = "10.1006/aphy.1993.1058",
    journal = "Annals Phys.",
    volume = "225",
    pages = "229--263",
    year = "1993"
}

@article{Nappi:1993ie,
    author = "Nappi, Chiara R. and Witten, Edward",
    title = "{A WZW model based on a nonsemisimple group}",
    eprint = "hep-th/9310112",
    archivePrefix = "arXiv",
    reportNumber = "IASSNS-HEP-93-61",
    doi = "10.1103/PhysRevLett.71.3751",
    journal = "Phys. Rev. Lett.",
    volume = "71",
    pages = "3751--3753",
    year = "1993"
}

@article{Pflaum:1993mt,
    author = "Pflaum, Markus J.",
    title = "{Quantum groups on fiber bundles}",
    eprint = "hep-th/9401085",
    archivePrefix = "arXiv",
    doi = "10.1007/BF02112317",
    journal = "Commun. Math. Phys.",
    volume = "166",
    pages = "279--316",
    year = "1994"
}

@article{Ballesteros:1994at,
    author = "Ballesteros, A. and Gromov, Nikolai A. and Herranz, F. J. and del Olmo, M. A. and Santander, M.",
    title = "{Lie bialgebra contractions and quantum deformations of quasiorthogonal algebras}",
    eprint = "hep-th/9412083",
    archivePrefix = "arXiv",
    reportNumber = "UVA-011194",
    doi = "10.1063/1.531368",
    journal = "J. Math. Phys.",
    volume = "36",
    pages = "5916--5937",
    year = "1995"
}

@article{Ballesteros:1994iv,
    author = "Ballesteros, A. and Herranz, F. J. and del Olmo, M. A. and Santander, M.",
    title = "{Classical deformations, Poisson-Lie contractions, and quantization of dual Lie bialgebras}",
    eprint = "hep-th/9403182",
    archivePrefix = "arXiv",
    reportNumber = "UVA-94-302",
    doi = "10.1063/1.531331",
    journal = "J. Math. Phys.",
    volume = "36",
    pages = "631--640 ",
    year = "1995"
}

@article{Chakrabarti:1994hp,
    author = "Chakrabarti, R. and Jagannathan, R.",
    title = "{On the Hopf structure of $U_{p,q}(gl(1|1))$ and the universal T-matrix of $Fun_p,q(GL(1|1))$}",
    eprint = "hep-th/9409161",
    archivePrefix = "arXiv",
    reportNumber = "IC-94-254",
    doi = "10.1007/BF00416022",
    journal = "Lett. Math. Phys.",
    volume = "37",
    pages = "191--199",
    year = "1996"
}

@article{Damaskinsky:1994au,
    author = "Damaskinsky, E. V. and Sokolov, M. A.",
    title = "{Some remarks on the Gauss decomposition for quantum group $GL_q(n)$ with application to q-bosonization}",
    eprint = "hep-th/9407024",
    archivePrefix = "arXiv",
    doi = "10.1088/0305-4470/28/13/017",
    journal = "J. Phys. A",
    volume = "28",
    pages = "3725--3732",
    year = "1995"
}

@article{Doplicher:1994zv,
    author = "Doplicher, S. and Fredenhagen, K. and Roberts, J. E.",
    title = "{Space-time quantization induced by classical gravity}",
    reportNumber = "DESY-94-065",
    doi = "10.1016/0370-2693(94)90940-7",
    journal = "Phys. Lett. B",
    volume = "331",
    pages = "39--44",
    year = "1994"
}

@article{Doplicher:1994tu,
    author = "Doplicher, Sergio and Fredenhagen, Klaus and Roberts, John E.",
    title = "{The quantum structure of space-time at the Planck scale and quantum fields}",
    eprint = "hep-th/0303037",
    archivePrefix = "arXiv",
    doi = "10.1007/BF02104515",
    journal = "Commun. Math. Phys.",
    volume = "172",
    pages = "187--220",
    year = "1995"
}

@article{Jagannathan:1994cm,
    author = "Jagannathan, R. and Van der Jeugt, J.",
    title = "{Finite dimensional representations of the quantum group $GL_{p,q}(2)$ using the exponential map from $U_{p,q}(gl(2))$}",
    eprint = "hep-th/9411200",
    archivePrefix = "arXiv",
    doi = "10.1088/0305-4470/28/10/013",
    journal = "J. Phys. A",
    volume = "28",
    pages = "2819--2832",
    year = "1995"
}

@article{Jurco:1994cx,
    author = "Jurco, B. and Stovicek, P.",
    title = "{Coherent states for quantum compact groups}",
    eprint = "hep-th/9403114",
    archivePrefix = "arXiv",
    reportNumber = "CERN-TH-7201-94",
    doi = "10.1007/BF02506391",
    journal = "Commun. Math. Phys.",
    volume = "182",
    pages = "221--251",
    year = "1996"
}

@article{Morozov:1994ab,
    author = "Morozov, Alexi and Vinet, Luc",
    title = "{Free field representation of group element for simple quantum groups}",
    eprint = "hep-th/9409093",
    archivePrefix = "arXiv",
    reportNumber = "ITEP-M3-94, CRM-2202",
    doi = "10.1142/S0217751X9800072X",
    journal = "Int. J. Mod. Phys. A",
    volume = "13",
    pages = "1651--1708",
    year = "1998"
}

@article{Zakrzewski:1994hlc,
    author = "Zakrzewski, S.",
    title = "{Quantum Poincaré group related to the $\kappa$-Poincaré algebra}",
    doi = "10.1088/0305-4470/27/6/030",
    journal = "J. Phys. A",
    volume = "27",
    pages = "2075",
    year = "1994"
}

@article{Ballesteros1995,
    author = "Ballesteros, A. and Herranz, F. J. and del Olmo, M. A. and Perena, C. M. and Santander, M.",
    title = "{Non-standard quantum (1+1) Poincaré group: a T-matrix approach}",
    eprint = "q-alg/9501029",
    archivePrefix = "arXiv",
    doi = "10.1088/0305-4470/28/24/012",
    journal = "J. Phys. A: Math. Gen.",
    volume = "28",
    pages = "7113--7125",
    year = "1995"
}

@article{Brown:1995fj,
    author = "Brown, J. David and Marolf, Donald",
    title = "{On relativistic material reference systems}",
    eprint = "gr-qc/9509026",
    archivePrefix = "arXiv",
    primaryClass = "gr-qc",
    reportNumber = "UCSBTH-95-26, CTMP-012-NCSU",
    doi = "10.1103/PhysRevD.53.1835",
    journal = "Phys. Rev. D",
    volume = "53",
    pages = "1835--1844",
    year = "1996"
}

@article{deAzcarraga:1995uw,
    author = "de Azcarraga, J. A. and Perez Bueno, J. C.",
    title = "{Relativistic and Newtonian $\kappa$-spacetimes}",
    eprint = "q-alg/9505004",
    archivePrefix = "arXiv",
    reportNumber = "FTUV-95-12, IFIC-95-12, FTUV-95-12---IFIC-95-12",
    doi = "10.1063/1.531196",
    journal = "J. Math. Phys.",
    volume = "36",
    pages = "6879--6896",
    year = "1995"
}

@article{Fronsdal1995a,
    author = "Fronsdal, C.",
    title = "{Universal T-matrix for twisted quantum $gl(n)$}",
    eprint = "q-alg/9505014",
    archivePrefix = "arXiv",
    year = "1995"
}

@article{Fronsdal1995b,
    author = "Fronsdal, C. and Galindo, A.",
    title = "{Deformations of multiparameter quantum $GL(n)$}",
    eprint = "q-alg/9505016",
    archivePrefix = "arXiv",
    doi = "10.1007/BF00739372",
    journal = "Lett. Math. Phys.",
    volume = "34",
    pages = "25--36",
    year = "1995"
}

@article{VanDerJeugt:1995yn,
    author = "Van Der Jeugt, J. and Jagannathan, R.",
    editor = "Burdik, C. and Lukierski, J. and Gazeau, J. P. and Havlicek, M. and Tolar, J.",
    title = "{The exponential map for representations of $U_{p,q}(gl(2))$}",
    eprint = "q-alg/9507009",
    archivePrefix = "arXiv",
    doi = "10.1007/BF01688821",
    journal = "Czech. J. Phys.",
    volume = "46",
    pages = "269",
    year = "1996"
}

@article{Ballesteros:1996awf,
    author = "Ballesteros, A. and Herranz, F. J. and Pere\~na, C. M.",
    title = "{Null-plane quantum universal R-matrix}",
    eprint = "q-alg/9607009",
    archivePrefix = "arXiv",
    reportNumber = "UBU-DFIS-96-06",
    doi = "10.1016/S0370-2693(96)01435-9",
    journal = "Phys. Lett. B",
    volume = "391",
    pages = "71--77",
    year = "1997"
}

@article{Chakrabarti1996,
    author = "Chakrabarti, R. and Jagannathan, R.",
    title = "{The dual $(p,q)$-Alexander-Conway Hopf algebras and the associated universal T-matrix}",
    eprint = "q-alg/9602020",
    archivePrefix = "arXiv",
    doi = "10.1007/s002880050273",
    journal = "Z. Phys. C",
    volume = "72",
    pages = "519–524",
    year = "1996"
}

@article{Feinsilver1997,
    author = "Feinsilver, P. and Franz, U. and Schott, R.",
    title = "{Duality and multiplicative stochastic processes on quantum groups}",
    doi = "10.1023/A:1022618114810",
    journal = "J. Theor. Probab.",
    volume = "10",
    pages = "795--818",
    year = "1997"
}

@article{Opanowicz:1998zz,
    author = "Opanowicz, Anna",
    title = "{Lie bialgebra structures for centrally extended two-dimensional Galilei algebra and their Lie-Poisson counterparts}",
    eprint = "q-alg/9710028",
    archivePrefix = "arXiv",
    doi = "10.1088/0305-4470/31/41/012",
    journal = "J. Phys. A",
    volume = "31",
    pages = "8387--8396",
    year = "1998"
}

@article{Quesne1998,
    author = "Quesne, C.",
    title = "{Duals of colored quantum universal enveloping algebras and colored universal T-matrices}",
    eprint = "q-alg/9711012",
    archivePrefix = "arXiv",
    doi = "10.1063/1.532378",
    journal = "J. Math. Phys.",
    volume = "39",
    pages = "1199–1222",
    year = "1998"
}

@article{Ahmedov1999,
    author = "Ahmedov, H. and Dayi, O. F.",
    title = "{$SL_q(2,R)$ at roots of unity}",
    eprint = "math/9809052",
    archivePrefix = "arXiv",
    doi = "10.1088/0305-4470/32/10/008",
    journal = "J. Phys. A: Math. Gen.",
    volume = "32",
    pages = "1895--1907",
    year = "1999"
}

@article{Ballesteros1999,
    author = "Ballesteros, A. and Herranz, F. J. and Parashar, P.",
    title = "{Multiparametric quantum $gl(2)$: Lie bialgebras, quantum R-matrices and non-relativistic limits}",
    eprint = "math/9806149",
    archivePrefix = "arXiv",
    doi = "10.1088/0305-4470/32/12/010",
    journal = "J. Phys. A: Math. Gen.",
    volume = "32",
    pages = "2369--2385",
    year = "1999"
}

@article{Ballesteros:1999ew,
    author = "Ballesteros, Angel and Celeghini, Enrico and Herranz, Francisco J.",
    title = "{Quantum (1+1) extended Galilei algebras: from Lie bialgebras to quantum R-matrices and integrable systems}",
    eprint = "math/9906094",
    archivePrefix = "arXiv",
    reportNumber = "UBU-DFIS-99-04",
    doi = "10.1088/0305-4470/33/17/303",
    journal = "J. Phys. A",
    volume = "33",
    pages = "3431--3444",
    year = "2000"
}

@article{Chakrabarti1999,
    author = "Chakrabarti, R. and Quesne, C.",
    title = "{On Jordanian $U_{h,\alpha}(gl(2))$ algebra and its T-matrices via a contraction method}",
    eprint = "math/9811064",
    archivePrefix = "arXiv",
    doi = "10.1142/S0217751X9900124X",
    journal = "Int. J. Mod. Phys. A",
    volume = "14",
    pages = "2511--2529",
    year = "1999"
}

@article{Opanowicz:1999qp,
    author = "Opanowicz, Anna",
    title = "{Two-dimensional centrally extended quantum Galilei groups and their algebras}",
    eprint = "math/9905141",
    archivePrefix = "arXiv",
    doi = "10.1088/0305-4470/33/9/316",
    journal = "J. Phys. A",
    volume = "33",
    pages = "1941--1960",
    year = "2000"
}

@article{Aizawa2000,
    author = "Aizawa, N.",
    title = "{Representation functions for Jordanian quantum group $SL_h(2)$ and Jacobi polynomials}",
    doi = "10.1088/0305-4470/33/20/302",
    eprint = "math/0003110",
    archivePrefix = "arXiv",
    journal = "J. Phys. A: Math. Gen.",
    volume = "33",
    pages = "3735--3752",
    year = "2000"
}

@article{Sokolov2000,
    author = "Sokolov, M. A.",
    title = "{Exponential representation of Jordanian matrix quantum group $GL_h(2)$}",
    doi = "10.1088/0305-4470/33/2/314",
    journal = "J. Phys. A: Math. Gen.",
    volume = "33",
    pages = "417--425",
    year = "2000"
}

@article{Douglas:2001ba,
    author = "Douglas, Michael R. and Nekrasov, Nikita A.",
    title = "{Noncommutative field theory}",
    eprint = "hep-th/0106048",
    archivePrefix = "arXiv",
    reportNumber = "ITEP-TH-31-01, IHES-P-01-27, RUNHETC-2001-18",
    doi = "10.1103/RevModPhys.73.977",
    journal = "Rev. Mod. Phys.",
    volume = "73",
    pages = "977--1029",
    year = "2001"
}

@article{Szabo:2001kg,
    author = "Szabo, Richard J.",
    title = "{Quantum field theory on noncommutative spaces}",
    eprint = "hep-th/0109162",
    archivePrefix = "arXiv",
    reportNumber = "HWM-01-35, EMPG-01-14",
    doi = "10.1016/S0370-1573(03)00059-0",
    journal = "Phys. Rept.",
    volume = "378",
    pages = "207--299",
    year = "2003"
}

@article{Neeb2002,
    author = "Neeb, K. H.",
    title = "{Central extensions of infinite-dimensional Lie groups}",
    doi = "10.5802/aif.1921",
    journal = "Ann. Inst. Fourier",
    volume = "52",
    pages = "1365--1442",
    year = "2002"
}

@article{Aizawa:2005fq,
    author = "Aizawa, N. and Chakrabaarti, R. and Segar, J.",
    title = "{Generalized boson algebra and its entangled bipartite coherent states}",
    eprint = "quant-ph/0509031",
    archivePrefix = "arXiv",
    doi = "10.1088/0305-4470/38/41/012",
    journal = "J. Phys. A",
    volume = "38",
    pages = "9007--9018",
    year = "2005"
}

@article{Aizawa2006,
    author = "Aizawa, N. and Chakrabarti, R. and Mohammed, S. S. Naina and Segar, J.",
    title = "{Universal T-matrix, representations of $OSp_q(1/2)$ and little q-Jacobi polynomials}",
    eprint = "math/0607566",
    archivePrefix = "arXiv",
    doi = "10.1063/1.2399360",
    journal = "J. Math. Phys.",
    volume = "47",
    pages = "123511",
    year = "2006"
}

@article{Poulin:2006ryq,
    author = "Poulin, David and Yard, Jon",
    title = "{Dynamics of a quantum reference frame}",
    eprint = "quant-ph/0612126",
    archivePrefix = "arXiv",
    doi = "10.1088/1367-2630/9/5/156",
    journal = "New J. Phys.",
    volume = "9",
    pages = "156--156",
    year = "2007"
}

@article{Bartlett2007,
    author = "Bartlett, Stephen D. and Rudolph, Terry and Spekkens, Robert W.",
    title = "{Reference frames, superselection rules, and quantum information}",
    eprint = "0610030",
    archivePrefix = "arXiv",
    primaryClass = "quant-ph",
    doi = "10.1103/RevModPhys.79.555",
    journal = "Rev. Mod. Phys.",
    volume = "79",
    pages = "555--609",
    year = "2007"
}

@article{Girelli:2007xn,
    author = "Girelli, Florian and Poulin, David",
    title = "{Quantum reference frames and deformed symmetries}",
    eprint = "0710.4393",
    archivePrefix = "arXiv",
    primaryClass = "gr-qc",
    doi = "10.1103/PhysRevD.77.104012",
    journal = "Phys. Rev. D",
    volume = "77",
    pages = "104012",
    year = "2008"
}

@article{Aizawa2009,
    author = "Aizawa, N. and Chakrabarti, R.",
    title = "{Coherent state on $SU_q(2)$ homogeneous space}",
    eprint = "0905.0194",
    archivePrefix = "arXiv",
    primaryClass = "math.QA",
    doi = "10.1088/1751-8113/42/29/295208",
    journal = "J. Phys. A",
    volume = "42",
    pages = "295208",
    year = "2009"
}

@article{Angelo2011,
    author = "Renato M Angelo and Nicolas Brunner and Sandu Popescu and Anthony J Short and Paul Skrzypczyk",
    title = "{Physics within a quantum reference frame}",
    eprint = "1007.2292",
    archivePrefix = "arXiv",
    primaryClass = "quant-ph",
    doi = "10.1088/1751-8113/44/14/145304",
    journal = "J. Phys. A",
    volume = "44",
    pages = "145304",
    year = "2011"
}

@article{Tambornino:2011vg,
    author = "Tambornino, Johannes",
    title = "{Relational observables in gravity: A review}",
    eprint = "1109.0740",
    archivePrefix = "arXiv",
    primaryClass = "gr-qc",
    doi = "10.3842/SIGMA.2012.017",
    journal = "SIGMA",
    volume = "8",
    pages = "017",
    year = "2012"
}

@article{Ballesteros2013,
    author = "Ballesteros, A. and Musso, F.",
    title = "{Quantum algebras as quantizations of dual Poisson-Lie groups}",
    eprint = "1212.3809",
    archivePrefix = "arXiv",
    primaryClass = "math-ph",
    doi = "10.1088/1751-8113/46/19/195203",
    journal = "J. Phys. A",
    volume = "46",
    pages = "195203",
    year = "2013"
}

@article{Palmer:2013zza,
    author = "Palmer, Matthew C. and Girelli, Florian and Bartlett, Stephen D.",
    title = "{Changing quantum reference frames}",
    eprint = "1307.6597",
    archivePrefix = "arXiv",
    primaryClass = "quant-ph",
    doi = "10.1103/PhysRevA.89.052121",
    journal = "Phys. Rev. A",
    volume = "89",
    pages = "052121",
    year = "2014"
}

@article{Busch2016,
    author = "Takayuki Miyadera and Leon Loveridge and Paul Busch",
    title = "{Approximating relational observables by absolute quantities: A quantum accuracy-size trade-off}",
    eprint = "1510.02063",
    archivePrefix = "arXiv",
    primaryClass = "quant-ph",
    doi = "10.1088/1751-8113/49/18/185301",
    journal = "J. Phys. A",
    volume = "49",
    pages = "185301",
    year = "2016"
}

@article{Smith2016,
    author = "Smith, Alexander R. H. and Piani, Marco and Mann, Robert B.",
    title = "{Quantum reference frames associated with noncompact groups: The case of translations and boosts and the role of mass}",
    eprint = "1602.07696",
    archivePrefix = "arXiv",
    primaryClass = "quant-ph",
    doi = "10.1103/PhysRevA.94.012333",
    journal = "Phys. Rev. A",
    volume = "94",
    pages = "012333",
    year = "2016"
}

@article{Giacomini:2017zju,
    author = "Giacomini, Flaminia and Castro-Ruiz, Esteban and Brukner, Caslav",
    title = "{Quantum mechanics and the covariance of physical laws in quantum reference frames}",
    eprint = "1712.07207",
    archivePrefix = "arXiv",
    primaryClass = "quant-ph",
    doi = "10.1038/s41467-019-08754-0",
    journal = "Nat. Commun.",
    volume = "10",
    pages = "494",
    year = "2019"
}

@article{Ballesteros:2018ghw,
    author = "Ballesteros, Angel and Mercati, Flavio",
    title = "{Extended noncommutative Minkowski spacetimes and hybrid gauge symmetries}",
    eprint = "1805.07099",
    archivePrefix = "arXiv",
    primaryClass = "hep-th",
    doi = "10.1140/epjc/s10052-018-6097-1",
    journal = "Eur. Phys. J. C",
    volume = "78",
    pages = "615",
    year = "2018"
}

@article{Giacomini:2018gxh,
    author = "Giacomini, Flaminia and Castro-Ruiz, Esteban and Brukner, Caslav",
    title = "{Relativistic quantum reference frames: The operational meaning of spin}",
    eprint = "1811.08228",
    archivePrefix = "arXiv",
    primaryClass = "quant-ph",
    doi = "10.1103/PhysRevLett.123.090404",
    journal = "Phys. Rev. Lett.",
    volume = "123",
    pages = "090404",
    year = "2019"
}

@article{Lizzi:2018qaf,
    author = "Lizzi, Fedele and Manfredonia, Mattia and Mercati, Flavio and Poulain, Timothé",
    title = "{Localization and reference frames in $\kappa$-Minkowski spacetime}",
    eprint = "1811.08409",
    archivePrefix = "arXiv",
    primaryClass = "hep-th",
    doi = "10.1103/PhysRevD.99.085003",
    journal = "Phys. Rev. D",
    volume = "99",
    pages = "085003",
    year = "2019"
}

@article{Lizzi:2019wto,
    author = "Lizzi, Fedele and Manfredonia, Mattia and Mercati, Flavio",
    title = "{Localizability in $\kappa$-Minkowski spacetime}",
    eprint = "1912.07098",
    archivePrefix = "arXiv",
    primaryClass = "hep-th",
    doi = "10.1142/S0219887820400101",
    journal = "Int. J. Geom. Meth. Mod. Phys.",
    volume = "17",
    pages = "2040010",
    year = "2020"
}

@article{Ballesteros:2020lgl,
    author = "Ballesteros, Angel and Giacomini, Flaminia and Gubitosi, Giulia",
    title = "{The group structure of dynamical transformations between quantum reference frames}",
    eprint = "2012.15769",
    archivePrefix = "arXiv",
    primaryClass = "quant-ph",
    doi = "10.22331/q-2021-06-08-470",
    journal = "Quantum",
    volume = "5",
    pages = "470",
    year = "2021"
}

@article{delaHamette:2020dyi,
    author = "de la Hamette, Anne-Catherine and Galley, Thomas D.",
    title = "{Quantum reference frames for general symmetry groups}",
    eprint = "2004.14292",
    archivePrefix = "arXiv",
    doi = "10.22331/q-2020-11-30-367",
    journal = "Quantum",
    volume = "4",
    pages = "367",
    year = "2020"
}

@article{Hoehn:2020epv,
    author = "Hoehn, Philipp A. and Smith, Alexander R. H. and Lock, Maximilian P. E.",
    title = "{Equivalence of approaches to relational quantum dynamics in relativistic settings}",
    eprint = "2007.00580",
    archivePrefix = "arXiv",
    primaryClass = "gr-qc",
    doi = "10.3389/fphy.2021.587083",
    journal = "Front. in Phys.",
    volume = "9",
    pages = "181",
    year = "2021"
}

@article{Krumm:2020fws,
    author = "Krumm, Marius and Hoehn, Philipp A. and Mueller, Markus P.",
    title = "{Quantum reference frame transformations as symmetries and the paradox of the third particle}",
    eprint = "2011.01951",
    archivePrefix = "arXiv",
    primaryClass = "quant-ph",
    doi = "10.22331/q-2021-08-27-530",
    journal = "Quantum",
    volume = "5",
    pages = "530",
    year = "2021"
}

@article{Giacomini:2021gei,
    author = "Giacomini, Flaminia",
    title = "{Spacetime quantum reference frames and superpositions of proper times}",
    eprint = "2101.11628",
    archivePrefix = "arXiv",
    primaryClass = "quant-ph",
    doi = "10.22331/q-2021-07-22-508",
    journal = "Quantum",
    volume = "5",
    pages = "508",
    year = "2021"
}

@article{Mikusch:2021kro,
    author = "Mikusch, Marion and Barbado, Luis C. and Brukner, Caslav",
    title = "{Transformation of spin in quantum reference frames}",
    eprint = "2103.05022",
    archivePrefix = "arXiv",
    primaryClass = "quant-ph",
    doi = "10.1103/PhysRevResearch.3.043138",
    journal = "Phys. Rev. Res.",
    volume = "3",
    pages = "043138",
    year = "2021"
}

@article{Streiter:2021kta,
    author = "Streiter, Lucas F. and Giacomini, Flaminia and Brukner, Caslav",
    title = "{Relativistic Bell test within quantum reference frames}",
    eprint = "2008.03317",
    archivePrefix = "arXiv",
    primaryClass = "quant-ph",
    doi = "10.1103/PhysRevLett.126.230403",
    journal = "Phys. Rev. Lett.",
    volume = "126",
    pages = "230403",
    year = "2021"
}

@article{Apadula:2022pxk,
    author = "Apadula, Luca and Castro-Ruiz, Esteban and Brukner, Caslav",
    title = "{Quantum reference frames for Lorentz symmetry}",
    eprint = "2212.14081",
    archivePrefix = "arXiv",
    primaryClass = "quant-ph",
    doi = "10.22331/q-2024-08-14-1440",
    journal = "Quantum",
    volume = "8",
    pages = "1440",
    year = "2024"
}

@article{Girelli:2022foc,
    author = "Girelli, Florian and Laudonio, Matteo",
    title = "{Group field theory on quantum groups}",
    eprint = "2205.13312",
    archivePrefix = "arXiv",
    primaryClass = "hep-th",
    year = "2022"
}

@article{Lizzi:2022hcq,
    author = "Lizzi, Fedele and Scala, Luca and Vitale, Patrizia",
    title = "{Localization and observers in $\varrho$-Minkowski spacetime}",
    eprint = "2205.10862",
    archivePrefix = "arXiv",
    primaryClass = "hep-th",
    doi = "10.1103/PhysRevD.106.025023",
    journal = "Phys. Rev. D",
    volume = "106",
    pages = "025023",
    year = "2022"
}

@article{Mertens:2022aou,
    author = "Mertens, Thomas G.",
    title = "{Quantum exponentials for the modular double and applications in gravity models}",
    eprint = "2212.07696",
    archivePrefix = "arXiv",
    primaryClass = "hep-th",
    doi = "10.1007/JHEP09(2023)106",
    journal = "JHEP",
    volume = "09",
    pages = "106",
    year = "2023"
}

@article{Glowacki:2023nnf,
    author = "Glowacki, Jan and Loveridge, Leon and Waldron, James",
    title = "{Quantum reference frames on finite homogeneous spaces}",
    eprint = "2302.05354",
    archivePrefix = "arXiv",
    primaryClass = "quant-ph",
    doi = "10.1007/s10773-024-05650-7",
    journal = "Int. J. Theor. Phys.",
    volume = "63",
    pages = "137",
    year = "2024"
}

@article{Lake:2023nua,
    author = "Lake, Matthew J. and Miller, Marek",
    title = "{Quantum reference frames, revisited}",
    eprint = "2312.03811",
    archivePrefix = "arXiv",
    primaryClass = "gr-qc",
    year = "2023"
}

@article{AliAhmad:2024vdw,
    author = "Ali Ahmad, Shadi and Chemissany, Wissam and Klinger, Marc S. and Leigh, Robert G.",
    title = "{Relational quantum geometry}",
    eprint = "2410.11029",
    archivePrefix = "arXiv",
    primaryClass = "hep-th",
    doi = "10.1016/j.nuclphysb.2025.116911",
    journal = "Nucl. Phys. B",
    volume = "1015",
    pages = "116911",
    year = "2025"
}

@article{DEsposito:2024wru,
    author = "D'Esposito, Vittorio and Fabiano, Giuseppe and Frattulillo, Domenico and Mercati, Flavio",
    title = "{Doubly quantum mechanics}",
    eprint = "2412.05997",
    archivePrefix = "arXiv",
    primaryClass = "quant-ph",
    doi = "10.22331/q-2025-04-24-1721",
    journal = "Quantum",
    volume = "9",
    pages = "1721",
    year = "2025"
}

@article{Fewster:2024pur,
    author = "Fewster, J. Christopher and Janssen, Daan W. and Loveridge, Leon Deryck and Rejzner, Kasia and Waldron, James",
    title = "{Quantum reference frames, measurement schemes and the type of local algebras in quantum field theory}",
    eprint = "2403.11973",
    archivePrefix = "arXiv",
    primaryClass = "math-ph",
    doi = "10.1007/s00220-024-05180-7",
    journal = "Commun. Math. Phys.",
    volume = "406",
    pages = "19",
    year = "2025"
}

@article{Ballesteros:2025ypr,
    author = "Ballesteros, Angel and Fernandez-Silvestre, Diego and Giacomini, Flaminia and Gubitosi, Giulia",
    title = "{Quantum Galilei group as quantum reference frame transformations}",
    eprint = "2504.00569",
    archivePrefix = "arXiv",
    primaryClass = "quant-ph",
    doi = "10.22331/q-2025-12-10-1935",
    journal = "Quantum",
    volume = "9",
    pages = "1935",
    year = "2025"
}

@article{DeVuyst:2025ezt,
    author = "De Vuyst, Julian and Hoehn, Philipp A. and Tsobanjan, Artur",
    title = "{On the relation between perspective-neutral, algebraic, and effective quantum reference frames}",
    eprint = "2507.14131",
    archivePrefix = "arXiv",
    primaryClass = "quant-ph",
    year = "2025"
}

@article{Fedida:2025viy,
    author = "Fedida, Samuel and Glowacki, Jan",
    title = "{Foundations of relational quantum field theory I: Scalars}",
    eprint = "2507.21601",
    archivePrefix = "arXiv",
    primaryClass = "quant-ph",
    year = "2025"
}

@article{Fiore:2025oks,
    author = "Fiore, Gaetano and Lizzi, Fedele",
    title = "{Mixed states for reference frames transformations}",
    eprint = "2507.05758",
    archivePrefix = "arXiv",
    primaryClass = "quant-ph",
    year = "2025"
}

\end{document}